\definecolor{MyLinkColor}{rgb}{0,0,0.4}
\newcommand{\R}{{\mathbb R}}
\newcommand{\C}{{\mathbb C}}
\newcommand{\bA}{{\mathbb A}}
\newcommand{\bB}{\mathbb{B}}
\newcommand{\N}{{\mathbb N}}
\newcommand{\cO}{\mathcal{O}}
\newcommand{\kL}{\mathcal{L}}
\newcommand{\wt}{\widetilde}
\newcommand{\oo}{\overline\omega}
\newcommand{\re}{\mathop{\rm Re}\nolimits}
\newcommand{\PV}{\mathop{\rm PV}\nolimits}
\newcommand{\ov}{\overline}
\newcommand{\p}{\partial}
\newcommand{\e}{\varepsilon}
\newcommand{\0}{\Omega}
\newcommand{\G}{\Gamma}
\newcommand{\supp}{\mathop{\rm supp}\nolimits}
\newtheorem{thm}{Theorem}[section]
\newtheorem{prop}[thm]{Proposition}
\newtheorem{lemma}[thm]{Lemma}
\newtheorem{cor}[thm]{Corollary}
\theoremstyle{remark}
\numberwithin{equation}{section} 
\title[The multiphase Muskat problem with equal viscosities in 2D]{The multiphase Muskat problem with equal viscosities in two dimensions}
\author{Jonas Bierler}
\address{Fakult\"at f\"ur Mathematik, Universit\"at Regensburg,   93040 Regensburg, Deutschland.}
\email{jonas.bierler@ur.de}
 \email{bogdan.matioc@ur.de}
\author{Bogdan--Vasile Matioc}
\subjclass[2020]{35R37; 76D27; 35K55}
\keywords{Multiphase Muskat problem; Parabolic evolution equation;  Singular integral; Subcritical spaces}
\begin{document}

\begin{abstract}
We study the two-dimensional multiphase Muskat problem describing the motion of three immiscible fluids with equal viscosities in a vertical homogeneous porous 
 medium identified with $\R^2$ under the effect of gravity.
We first   formulate the  governing equations  as a strongly coupled evolution problem for the functions that parameterize the sharp interfaces between the fluids.  
Afterwards we prove that the problem is of parabolic type and establish its well-posedness together with two parabolic smoothing  properties.
For solutions that are not global we exclude, in a certain regime,  that the interfaces come into contact along a curve segment.
\end{abstract}

\maketitle

\section{Introduction and the main results}\label {Sec:1}
\subsection*{The mathematical model}
In this paper we study the two-dimensional multiphase Muskat problem describing the motion of three incompressible fluids with positive constant densities 
\[
\rho_3>\rho_2>\rho_1
\]
in a vertical porous medium identified with~$\R^2$.
Such three-phase flows are of great interest not only from a mathematical point of view but also in many other areas of science and technology, 
such as petroleum extraction and environmental engineering, cf. e.g. \cite{BT12, AP14}.  
In this paper we restrict our attention to the  particular case when the three fluids have  equal viscosities, which we denote by $\mu>0$.  
 We further assume that the fluid phases are separated at each time instant~$t{\geq0}$ by sharp interfaces which we described as being the graphs
 \[
\Gamma_{f}^{c_\infty}(t)\coloneqq{}\{(x,c_\infty+f(t,x))\,:\, x\in\R\} \quad\text{and}\quad  \Gamma_h(t)\coloneqq{}\{(x,h(t,x))\,:\, x\in\R\}.
 \] 
Here $c_\infty$ is a fixed positive constant and the functions $f(t),\, h(t):\R\to\R$ are unknown and are assumed to satisfy $f(t)+c_\infty>h(t)$ during the motion.
The fluid with density $\rho_i$ occupies the domain $\0_i(t)\subset\R^2$, $1\leq i\leq 3$, where $\0_2(t)\coloneqq{}\R^2\setminus\overline{\0_1(t)\cup\0_3(t)}$ and
\begin{align*}
\0_1(t)\coloneqq{}\{(x,y)\,:\, y>f(t,x)+c_\infty\},\quad \0_3(t)\coloneqq{}\{(x,y)\,:\, y<h(t,x)\}.
\end{align*}

 In the fluid layers the dynamic  is governed by the  equations 
 \begin{subequations}\label{PB}
\begin{equation}\label{eq:S1}
\left.\begin{array}{rllllll}
v_i(t)\!\!\!\!&=&\!\!\!\!-\cfrac{k}{\mu}\big(\nabla p_i(t)+(0,\rho_i g)\big)   , \\[1ex]
{\rm div}\,  v_i(t)\!\!\!\!&=&\!\!\!\!0
\end{array}
\right\}\qquad\text{in $ \0_i(t)$, $1\leq i\leq 3$},
\end{equation} 
  where $p_i(t)$ is the pressure and $v_i(t)\coloneqq{}(v_i^1(t),v_i^2(t))$ denotes the velocity field of the fluid located at $\0_i(t)$.
  The positive constant $k$ is the permeability of the  homogeneous porous medium and $g$ is the Earth's gravity. 
  The equation $\eqref{eq:S1}_1$ is Darcy's law which is oftenly used to model  flows in porous media, cf. e.g. \cite{Be88},  and $\eqref{eq:S1}_2$ describes the conservation of mass in all phases.
 
 We supplement \eqref{eq:S1} with  the following boundary conditions  at the free interfaces
\begin{equation}\label{eq:S2}
\left.\begin{array}{rllllll}
p_i(t)\!\!\!\!&=&\!\!\!\!p_{i+1}(t), \\[1ex]
 \langle v_i(t)| \nu_i(t)\rangle\!\!\!\!&=&\!\!\!\!  \langle v_{i+1}(t)| \nu_i(t)\rangle 
\end{array}
\right\}\qquad\text{on $\p\0_i(t)\cap\p\0_{i+1}(t)$, $i=1,\, 2$.}
\end{equation} 
Here    $\nu_i(t) $ denotes the unit normal at $\p\0_i(t)\cap\p\0_{i+1}(t)$ pointing into $\0_{i}(t)$ and~${\langle \, \cdot\,|\,\cdot\,\rangle}$ is the inner scalar product in~$\R^2$.
Additionally, we impose the  far-field boundary  conditions
 \begin{equation}\label{eq:S3}
\left.\begin{array}{rlllll}
v_i(t,x,y)\!\!\!\!&\to 0 \qquad\text{for  $|(x,y)|\to\infty$,  $1\leq i\leq 3$},\\[1ex]
f^2(t,x)+h^2(t,x)\!\!\!\!&\to 0 \qquad\text{for  $|x|\to\infty$,}
\end{array}\right\}
\end{equation}
which state that far away the flow  is nearly stationary.

Finally, in order to describe the motion of the free interfaces, we set their normal velocity equal to the normal component of the velocity field at the free boundary, that is
 \begin{equation}\label{eq:S4}
  \left.\begin{array}{rllllll}
 \p_tf(t)\!\!\!\!&=&\!\!\!\! \langle v_1(t)| (-\p_x f(t),1)\rangle \qquad\text{on   $\Gamma_{f}^{c_\infty}(t),$}\\[1ex]
  \p_th(t)\!\!\!\!&=&\!\!\!\! \langle v_2(t)| (-\p_x h(t),1)\rangle \qquad\text{on   $\G_h(t),$}
 \end{array}\right\}
\end{equation}  
and we  impose the initial condition
\begin{equation}\label{eq:S5}
(f,h)(0,\cdot )= (f_0,h_0).
\end{equation} 
\end{subequations}
We call the closed system  \eqref{PB}  the multiphase Muskat problem.\medskip

\subsection*{Summary of results and outline of the paper}
 The classical Muskat problem describing the dynamics of two fluid phases under the influence of gravity has received recently much attention in the mathematics community.
The numerous studies addressed the well-posedness issue  \cite{AN20x, MBV18, MBV19, AL20, BCS16, CCG11, CG07, NF20x, NP20},  
questions related to  global existence of solutions \cite{MBV19, Cam19, CCGS13, CGCSS14x, CGSV17, DLL17, GGPS19, PS17} 
and to  singularity formation \cite{CCFG13, CCFGL12}, but also the modeling  and dynamics of such flows in an inhomogeneous porous medium  \cite{BCG14, GG14, PCT17}.

For the multiphase Muskat problem \eqref{PB} considered herein much lees is known. 
This setting has been studied before only in a three-dimensional setting in \cite{CG10} where the authors
 established a local existence and uniqueness result in $H^k(\R^2)$ with $k\geq 4$.
 Moreover, it is shown in \cite{CG10} for solutions which are not global but bounded in $C^{1+\gamma}(\R^2)$, $\gamma\in(0,1)$, that the fluid interfaces cannot touch along a curve segment 
 when the time approaches the maximal  existence time, 
 excluding thus the occurrence of so-called squirt singularities.
A related scenario has been considered in two-dimensions, but in a periodic setting and with one of the fluids being air at uniform pressure, in \cite{EMM12a, EMW18} where
 the well-posedness and  the stability of equilibria are investigated.
 
Similarly as in the three dimensional case \cite{CG10}, we show herein that problem~\eqref{PB} can be expressed as a nonlinear, nonlocal, and 
strongly coupled  evolution problem with  nonlinearities described by contour integrals, cf.~\eqref{eq:F} below.
The equivalence of the formulations~\eqref{PB} and~\eqref{eq:F} is rigorously established in Theorem~\ref{T:1} below, by making use of the results from Appendix~\ref{Sec:A}.
The analysis in Appendix~\ref{Sec:A}, where in particular we extend Privalov's theorem to contour integrals over unbounded graphs (see Theorem~\ref{T:Priva}), 
 also motivates the choice of homogeneous Sobolev spaces in the study of the Muskat problem \cite{AL20}.
Our second main result stated in Theorem~\ref{MT1} establishes the well-posedness of the problem 
in the subcritical Sobolev spaces $H^s(\R)^2$ with $s\in(3/2,2)$. 
It also provides two parabolic smoothing properties.
Finally, in Proposition~\ref{MP1} we show for bounded solutions with finite existence time that the fluid interfaces intersect when the time approaches the maximal  existence time at least in one point, 
 but we exclude  also in this two-dimensional scenario the formation of squirt singularities.
 
 Compared to the two-phase Muskat  problem, cf. \cite{CG07}, new difficulties arise from the fact that the  coupling terms in \eqref{eq:F} are of highest order.
  However, based on the mapping properties established in Section~\ref{Sec:2}, we prove that the linearized operator, 
  which is represented as a $2\times 2$ matrix, see Section~\ref{Sec:3}, has lower order off-diagonal entries.
  A similar feature has been evinced for the Muskat problem investigated in  \cite{EMW18}. 
  The benefit of this weak coupling at the level of the linearization is that only the diagonal terms 
   need to be considered  when establishing  parabolicity for the problem.
   Once this is done, we can make use of  the abstract parabolic theory from \cite{L95} in the study of this multiphase Muskat problem.

\subsection*{Notation.} Given $k,\,n\in\N$ and an open set $\0\subset\R^n$, we denote by ${\rm C}^{k}(\0)$ the space consisting  of real-valued $k$-time continuously differentiable functions on $\0$,
and  ${\rm UC}^{k}(\0)$ is  the subspace of~${{\rm C}^{k}(\0)}$ having functions with uniformly continuous derivatives up to order  $k$ as elements. 
Moreover,~${{\rm BUC}^{k}(\0)}$ is the Banach space of functions with bounded and uniformly continuous derivatives up to order  $k$.
Finally, given $\alpha\in(0,1)$, we set 
\[
{\rm BUC}^{k+\alpha}(\0)\coloneqq{}\Big\{f\in {\rm BUC}^{k}(\0)\,:\, [\p^\beta f]_\alpha\coloneqq{}\sup_{x\neq y}\frac{|\p^\beta f(x)-\p^\beta f(y)|}{|x-y|^\alpha}<\infty \,\forall \, |\beta|=k\Big\}.
\]
Given Banach spaces $X$ and $Y$, the space ${\rm C}^{1-}(X,Y)$ consists of all locally Lipschitz maps from~$X$ to~$Y$.
Moreover,  we write $A\in \kL^k_{\rm sym}(X,Y)$  if~$A:\;X^k\to Y$ is $k$-linear, bounded, and symmetric.

\subsection*{Solving  the fixed time problem}
A remarkable property of~\eqref{PB} is the fact that the equations~\eqref{eq:S1}-\eqref{eq:S3}
are linear and have constant coefficients. 
This property  enables us to identify the velocity field in terms of the a priori unknown functions~$f$ and $h$ by means of contour integrals.
Such an approach has been  followed in the context of the Muskat problem at least at formal level already in the 80's, cf. \cite{DR84}.
For the clarity of the exposition we omit in this part the time dependence and  write $(\cdot)'$ for the the $x$-derivative of functions that depend only on~$x$.
In Theorem~\ref{T:1} below we provide, under suitable regularity constraints, an explicit formula for the velocity field in terms of~$X\coloneqq{}(f,h)$.
Our approach generalizes  the one followed in \cite{MBV19} in the context of the two-phase Muskat problem and strongly relies  on results from Appendix~\ref{Sec:A}.

\begin{thm}\label{T:1} Let $r\in(3/2,2)$, $c_\infty>0$, and $f,\,h\in H^r(\R)$ with $c_\infty+f>h$ be given.
Then the boundary value  problem
\begin{equation}\label{eq:Stat}
\left.\begin{array}{rllllll}
 v_i\!\!\!\!&=&\!\!\!\!-\displaystyle\frac{k}{\mu}\big(\nabla p_i+(0,\rho_i g)\big)&\text{in $ \0_i$, $1\leq i\leq 3$},\\[1ex]
{\rm div}\,  v_i\!\!\!\!&=&\!\!\!\!0 &\text{in $ \0_i$, $1\leq i\leq 3$}, \\[1ex]
p_i\!\!\!\!&=&\!\!\!\!p_{i+1}&\text{on $\p\0_i\cap\p\0_{i+1}$, $i=1,\, 2$}, \\[1ex]
 \langle v_i| \nu_i\rangle\!\!\!\!&=&\!\!\!\!  \langle v_{i+1}| \nu_i\rangle &\text{on $\p\0_i\cap\p\0_{i+1}$, $i=1,\, 2$,}\\[1ex]
v_i(x,y)\!\!\!\!&\to&\!\!\!\! 0 &\text{for  $|(x,y)|\to\infty$,  $1\leq i\leq 3$} 
\end{array}\right\}
\end{equation}
has a unique solution\footnote{The pressures $(p_1,p_2,p_3)$ are unique only up to the same additive constant.} $(v_1,v_2,v_3,p_1,p_2,p_3)$  with 
\[
v_i\in {\rm BUC}(\0_i)\cap {\rm C}^\infty(\0_i) \qquad\text{and}\qquad p_i\in {\rm UC}^1(\0_i)\cap {\rm C}^\infty(\0_i),\qquad 1\leq i\leq 3.
\]
Moreover,  setting $v\coloneqq{} v_1{\bf 1}_{\0_1}+v_2{\bf 1}_{\0_2}+v_3{\bf 1}_{\0_3}$, it holds for $z\coloneqq{}(x,y)\in\R^2\setminus(\G_h\cup\Gamma_{f}^{c_\infty})$ that
\begin{align}\label{forvelo}
v(z)=\frac{\Theta_1}{\pi}\int_\R\frac{(c_\infty+f(s)-y,x-s)}{(x-s)^2+(y-c_\infty-f(s))^2}f'(s)\, ds+\frac{\Theta_2}{\pi}\int_\R\frac{(h(s)-y,x-s)}{(x-s)^2+(y-h(s))^2}h'(s)\, ds,
\end{align}
with constants
\begin{align}\label{T1+T2}
\Theta_1\coloneqq{}\frac{kg(\rho_1-\rho_2)}{2\mu}\qquad\text{and}\qquad\Theta_2\coloneqq{} \frac{kg(\rho_2-\rho_3)}{2\mu}.
\end{align}
\end{thm}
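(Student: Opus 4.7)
The plan is to prove existence by verifying that the right-hand side of~\eqref{forvelo} solves~\eqref{eq:Stat}, and to prove uniqueness by a Liouville-type argument. For existence, I define $v$ by the right-hand side of~\eqref{forvelo} and set $v_i\coloneqq{} v|_{\0_i}$. The key observation is that the kernel multiplying $f'(s)$ is precisely $\nabla_z^\perp\log|z-(s,c_\infty+f(s))|$, and similarly for the $h$-integral; hence each $v_i$ can be written as $\nabla^\perp\Psi$ with a stream function $\Psi$ that is harmonic off the two interfaces, so $v_i$ is smooth, curl-free, and divergence-free in $\0_i$. Since $r>3/2$ gives $f,h\in H^r(\R)\hookrightarrow C^{1,\alpha}(\R)$ for some $\alpha>0$ and $f',h'\in H^{r-1}(\R)$ is H\"older continuous, the Privalov-type estimates of Appendix~\ref{Sec:A} (in particular Theorem~\ref{T:Priva}) ensure that each $v_i$ extends to a bounded uniformly continuous function on $\overline{\0_i}$ which vanishes at infinity. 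Curl-freeness then lets me recover the pressure $p_i$ in $\0_i$ from Darcy's law $\eqref{eq:Stat}_1$ by integration, up to an additive constant.

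The interface conditions are the technical heart of the proof. Identifying a vector $(a,b)\in\R^2$ with the complex number $a-ib$ and setting $\omega(s)\coloneqq{}s+ic_\infty+if(s)$, the first integral in~\eqref{forvelo} becomes, up to the prefactor $-2\Theta_1$, the Cauchy integral on $\Gamma_f^{c_\infty}$ of the density $\phi(\omega(s))\coloneqq{}f'(s)/\omega'(s)$; an analogous identity holds for the $h$-integral on $\Gamma_h$. Applying the Plemelj-Sokhotski jump relations, whose validity on unbounded graphs is a by-product of the analysis in Appendix~\ref{Sec:A}, yields that the jump of $v^1-iv^2$ across $\Gamma_f^{c_\infty}$ equals $-2\Theta_1 f'(s)/(1+if'(s))$. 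Separating real and imaginary parts, the imaginary part encodes the continuity $\langle v_1|\nu_1\rangle=\langle v_2|\nu_1\rangle$ of the normal velocity, while the real part reads $a+bf'=-2\Theta_1 f'$ with $(a,b)\coloneqq{}v_1-v_2$; combined with the gravity contribution $-\rho_i g\,dy$ in the tangential derivative of $p_i$ along the curve (via Darcy), this is exactly what is needed for $p_1-p_2$ to be constant on $\Gamma_f^{c_\infty}$, and the precise value $\Theta_1=kg(\rho_1-\rho_2)/(2\mu)$ is dictated by this balance. The same argument with $\Theta_2$ on $\Gamma_h$ matches $p_2$ and $p_3$, and the free additive constants in $p_1,p_2,p_3$ can finally be fixed to give $p_i=p_{i+1}$ identically on both interfaces.

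For uniqueness, if $(v_i,p_i)$ and $(\tilde v_i,\tilde p_i)$ are two solutions, the differences $V_i\coloneqq{}v_i-\tilde v_i$ and $P_i\coloneqq{}p_i-\tilde p_i$ satisfy $V_i=-(k/\mu)\nabla P_i$ with $\Delta P_i=0$ in $\0_i$, continuity of both $P_i$ and the normal component of $V_i$ across the two interfaces, and $V_i\to 0$ at infinity. Continuity of the trace and of the normal derivative of the piecewise harmonic function $P$ promotes it to a distributional harmonic function on all of $\R^2$ with bounded gradient vanishing at infinity; Liouville's theorem then forces $\nabla P\equiv 0$, whence $V_i\equiv 0$ and $P_i$ differs by the same constant in each phase. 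The main obstacle throughout is extracting the precise jump relations for the unbounded contour integrals at the regularity level dictated by $f,h\in H^r(\R)$; this is where Appendix~\ref{Sec:A} is indispensable, and where the precise values $\Theta_1,\Theta_2$ emerge naturally from the cancellation between the Plemelj jump and the gravity term in Darcy's law.
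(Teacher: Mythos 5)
Your proof is correct and follows essentially the same path as the paper. For existence you define $v$ by~\eqref{forvelo}, use the appendix results (Theorem~\ref{T:Priva} for boundedness, continuity up to the interfaces, and decay; Lemma~\ref{L:Plemelj} for the one-sided traces) to check the boundary and far-field conditions, recover the pressures by path integration, and observe that the tangential jump $\langle v_1-v_2\,|\,(1,f')\rangle=-2\Theta_1 f'$ on $\Gamma_f^{c_\infty}$, combined with the gravity term in Darcy's law, forces $\partial_\tau(p_1-p_2)=0$ precisely when $\Theta_1=kg(\rho_1-\rho_2)/(2\mu)$ -- this is the same calculation the paper packages via~\eqref{v12rand}--\eqref{v23rand}. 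For uniqueness you pass to the pressure potential $P$ of the velocity difference, whereas the paper builds the stream function $\Psi$ (so that $\nabla\psi_i=(-v^2,v^1)$) and applies Liouville to the holomorphic $u$ with $u'=-(v^2,v^1)$; the two objects are Cauchy--Riemann conjugates of the same holomorphic function, so the arguments are interchangeable and buy the same result. Two small remarks. First, your statement that the imaginary part of the jump $-2\Theta_1 f'/(1+if')$ encodes normal-velocity continuity while its real part reads $a+bf'=-2\Theta_1 f'$ conflates the real/imaginary split with the normal/tangential split: these are different linear decompositions of the jump vector, though the identity $a+bf'=-2\Theta_1 f'$ you then use is indeed the correct tangential projection, so the balance goes through. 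Second, the step ``continuity of trace and normal derivative promotes $P$ to a distributional harmonic function on $\R^2$'' is asserted but not verified; the paper does the corresponding work explicitly by first establishing $\Psi\in \mathrm{UC}^1(\R^2)$ and then showing $\Delta\Psi=0$ in $\mathcal{D}'(\R^2)$ via integration by parts, and the analogous computation would be needed to make your version complete.
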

\begin{proof} We devise the proof into two steps.\medskip

\noindent{Existence.} Let $v:\R^2\setminus(\G_h\cup\Gamma_{f}^{c_\infty})\to\R^2$ be given by \eqref{forvelo} and set $v_i\coloneqq{} v|_{\0_i}$, $1\leq i\leq 3$. 
In the notation from Appendix~\ref{Sec:A}, see \eqref{F:V}, it holds that 
\begin{align*}
v(z)=2\Theta_1v(f)[f']( z-(0,c_\infty))+2\Theta_2v(h)[h'](z),\qquad z\in \R^2\setminus(\G_h\cup\Gamma_{f}^{c_\infty}).
\end{align*}
Then $v\in {\rm C}^\infty(\R^2\setminus(\G_h\cup\Gamma_{f}^{c_\infty}))$ and, according to Theorem~\ref{T:Priva}, we also have~${v_i\in {\rm BUC}^{r-3/2}(\0_i)}$ for~$1\leq i\leq 3.$
Moreover, Lemma~\ref{L:vanis} yields  that $\eqref{eq:Stat}_2$ and~$\eqref{eq:Stat}_5$ hold true. 
In view of Lemma~\ref{L:Plemelj} we further get
\begin{align}
v_i(x,c_\infty+f(x))&=\frac{\Theta_1}{\pi}\PV\int_\R\frac{(-\delta_{[x,s]}f,s)}{s^2+(\delta_{[x,s]}f)^2}f'(x-s)\, ds
+\frac{\Theta_2}{\pi}\int_\R\frac{(-\delta_{[x,s]}X,s)}{s^2+(\delta_{[x,s]}X)^2}h'(x-s)\, ds\nonumber\\[1ex]
&\hspace{0.5cm}+(-1)^{i}\Theta_1\frac{f'(1,f')}{1+f'^2}(x),\quad i=1,\, 2,\label{v12rand}
\end{align}
and
\begin{align}
v_i(x,h(x))&=\frac{\Theta_1}{\pi}\int_\R\frac{(-\delta'_{[x,s]}X,s)}{s^2+(\delta'_{[x,s]}X)^2}f'(x-s)\, ds
+\frac{\Theta_2}{\pi}\PV\int_\R\frac{(-\delta_{[x,s]}h,s)}{s^2+(\delta_{[x,s]}h)^2} h'(x-s)\, ds\nonumber\\[1ex]
&\hspace{0.5cm}+(-1)^{i+1}\Theta_2\frac{h'(1,h')}{1+h'^2}(x),\quad i=2,\, 3,\label{v23rand}
\end{align}
where $\PV$ is the principal value and, setting $X\coloneqq{}(f,h)$, we defined 
\begin{equation}\label{notat}
\begin{aligned}
\delta_{[x,s]}f&\coloneqq{} f(x)-f(x-s),\\[1ex]
 \delta_{[x,s]}X&\coloneqq{} c_\infty+f(x)-h(x-s),\\[1ex]
  \delta'_{[x,s]}X&\coloneqq{} h(x)-c_\infty-f(x-s).
\end{aligned}
\end{equation}
The formulas \eqref{v12rand} and \eqref{v23rand} now show the validity of $\eqref{eq:Stat}_4$.

We next define pressures $p_i:\0_i\to\R$, $1\leq i\leq 3,$ by the formula
\begin{align}\label{pressures}
p_i(x,y)&\coloneqq{}-\frac{k}{\mu}\Big(\int_0^x\langle v_i(s,d_i(s))|(1,d_i'(s))\rangle\, ds+\int_{d_i(x)}^yv_i^2(x,s)\, ds\Big)-\rho_igy+c_i,
\end{align}
where $v_i\eqqcolon(v_i^1,v_i^2)$, $c_i\in\R$ are constants, and  with
\[
d_1\coloneqq{}\|f\|_\infty+c_\infty+1,\qquad d_2\coloneqq{}\frac{1}{2}(c_\infty+f+h),\qquad d_3\coloneqq{}-\|h\|_\infty-1. 
\]
Taking advantage of $\p_yv^1=\p_xv^2$ in  $\R^2\setminus(\G_h\cup\Gamma_{f}^{c_\infty})$, cf. Lemma~\ref{L:vanis}, we deduce that~${p_i\in{\rm  C}^1(\0_i)}$  and that~$\eqref{eq:Stat}_1$ is satisfied.
The regularity properties established  for $v_i$ now imply that indeed~${p_i\in {\rm UC}^1(\0_i)\cap {\rm C}^\infty(\0_i)}$, $1\leq i\leq 3$.
Using $\eqref{eq:Stat}_1$ and \eqref{v12rand}-\eqref{v23rand}, it then immediately follows that~$(p_1-p_2)|_{\G_f^{c_\infty}}$ and~$(p_2-p_3)|_{\G_h}$ are constants.
 Hence, for a suitable choice of~$c_i$, we may achieve that $\eqref{eq:Stat}_3$ are satisfied. 
 Therewith we established the existence of at least a solution to~\eqref{eq:Stat}.\medskip
 
\noindent{Uniqueness.} It remains to show that the system \eqref{eq:Stat} has, when setting the gravity constant~$g$ equal to zero, only the trivial solutions defined by $v=(v^1,v^2)=0$ and $p=c\in\R$.
 To begin, we note that $\eqref{eq:Stat}_1$ implies $\p_yv_i^1-\p_x v_i^2=0$ in $\0_i,$ $1\leq i\leq 3$. 
 Moreover, combining~$\eqref{eq:Stat}_1$,~$ \eqref{eq:Stat}_3$,  and~$\eqref{eq:Stat}_4$, we get
 $v\in{\rm BUC}(\R^2)$.
 Stokes' theorem then yields 
 \begin{align}\label{rot0}
 \p_y v^1-\p_x v^2=0\qquad\text{in $\mathcal{D}'(\R^2)$}.
\end{align}  
We next set  $\Psi\coloneqq{} \psi_1{\bf 1}_{\ov{\0_1}}+\psi_2{\bf 1}_{\ov{\0_2}}+\psi_3{\bf 1}_{\ov{\0_3}}$, where
$\psi_i:\ov{\0_i}\to\R$ are given  by
\begin{align*}
\psi_i(x,y)&\coloneqq{}\int_{h(x)}^yv^1(x,s)\, ds-\int_0^x\langle v(s,h(s))|(-h'(s),1)\rangle\, ds,\qquad i=2,\, 3,\\[1ex]
\psi_1(x,y)&\coloneqq{}\int_{c_\infty+f(x)}^yv^1(x,s)\, ds+\psi_2(x,c_\infty+f(x)).
\end{align*}
It follows immediately that  $\Psi\in {\rm C}(\R^2)$.
 Additionally, using Stokes's theorem and $\eqref{eq:Stat}_2$ we may show that $\nabla\psi_i=(-v^2,v^1)$ in $\mathcal{D}'(\0_i)$, $1\leq i\leq 3$. 
As a direct consequence we get~${\psi_i\in {\rm UC}^1(\0_i)}$ for  $1\leq i\leq 3$.
Additionally,~${\nabla\Psi\in\mathcal{D}'(\R^2)}$ belongs to ${\rm BUC}(\R^2)$, hence~${\Psi\in {\rm UC}^1(\R^2)}$.
Therefore, given~${\varphi\in{\rm C}^\infty_0(\R^2),}$ we have
\begin{align*}
\langle\Delta\Psi,\varphi\rangle=\int_{\R^2} \Psi\Delta\varphi\, dz=-\int_{\R^2}\langle\nabla\Psi|\nabla\varphi\rangle\, dz=\int_{\R^2}\langle( v^2,-v^1)|\nabla\varphi\rangle\, dz=\langle \p_yv^1-\p_x v^2,\varphi\rangle,
\end{align*}
and \eqref{rot0} then yields  $\Delta\Psi=0$ in $\mathcal{D}'(\R^2)$.
Consequently,~$\Psi$ is the real part of a holomorphic function~${u:\C\to\C}$.
Since $u'$ is  holomorphic too and ${u'=(\p_x\Psi,-\p_y\Psi)=-(v^2,v^1)}$ is bounded, cf. $\eqref{eq:Stat}_5$, Liouville's theorem  yields $u'=0$, hence $v=0$.
Moreover, in view of $\eqref{eq:Stat}_1$, we now obtain that  $\nabla p=0$ in $\R^2$, meaning that $p$ is constant in $\R^2$. This completes our arguments.
\end{proof}

\subsection*{The contour integral formulation and the main results}
Concerning the multiphase Muskat problem \eqref{PB}, Theorem~\ref{T:1} implies 
that if, at any given time $t\geq0$, $f(t) $ and $h(t)$ belong to $H^r(\R)$, with $r\in(3/2,2)$, and $c_\infty+f(t)>h(t)$, then $v_1(t)|_{\Gamma_f^{c_\infty}(t)}$ and $v_2(t)|_{\Gamma_h(t)}$
are given by \eqref{v12rand} and~\eqref{v23rand}.
Recalling also  \eqref{eq:S4},
we can thus formulate the moving boundary  problem \eqref{PB} as an autonomous evolution problem for the pair $X\coloneqq{}(f,h)$ which reads as
\begin{align}\label{eq:F}
\frac{d X(t)}{dt}=\Phi(X(t)),\quad t\geq 0,\qquad X(0,\cdot)=X_0\coloneqq{}(f_0,h_0),
\end{align}
where the nonlinear operator $\Phi=(\Phi_1,\Phi_2)$ is defined by
\begin{equation}\label{Phi1}
\Phi_1(X) \coloneqq \Theta_1\bB(f)[f']+\frac{\Theta_2}{\pi}\big((c_\infty+f)f'C_1(X)[h']-f'C_1(X)[hh']+D_1(X)[h']\big)
\end{equation}
and 
\begin{equation}\label{Phi2}
\Phi_2(X)\coloneqq\Theta_2\bB(h)[h']+\frac{\Theta_1}{\pi}\big((h-c_\infty)h'C_1'(X)[f']-h'C_1'(X)[ff']+D_1'(X)[f']\big).
\end{equation} 
The constants $\Theta_i$, $i=1,\, 2$, are introduced in \eqref{T1+T2} and, given $u\in H^{r}(\R)$, with $r\in(3/2,2)$ which is fixed in the remaining part, we denote by   $\bB(u)$ the linear operator 
\begin{align}
\bB(u)\coloneqq\frac{1}{\pi}\big( B_{0,1}^0(u) +u'B_{1,1}^0(u)\big),
\end{align}  
where the operators 
$B_{m,1}^0,$~${m=0,\,1}$, as well as~$C_1,\, C_1',\, D_1,$ and $ D_1'$ are defined in \eqref{BNM}-\eqref{Bom} and~\eqref{CDms} below.
We shall treat \eqref{eq:F} as a fully nonlinear evolution problem in $H^{r-1}(\R)^2$.
To this end we prove in Corollary~\ref{Cor:1} below that $\Phi$ is smooth, that is
\begin{align}\label{RegP}
\Phi\in{\rm C}^{\infty}(\cO_r,H^{r-1}(\R)^2),
\end{align}
where
\begin{align}\label{or}
\mathcal{O}_r\coloneqq{}\{(f,h)\in H^r(\R)^2\,:\, c_\infty+f>h\}.
\end{align}
Moreover, our analysis (see Proposition~\ref{P:pfo1} below) will disclose that  \eqref{eq:F}  is of parabolic type in the phase space $\cO_r$, that is the Fr\'echet derivative
 $\p\Phi(X)$ at any $X\in\cO_r$, viewed as an unbounded  operator in $H^{r-1}(\R)^2$ with domain $H^{r}(\R)^2$,
 is the generator of an analytic semigroup in $\kL(H^{r-1}(\R)^2)$. In the notation introduced in \cite{Am95} the latter property  writes as
 \begin{align}\label{Gen}
 -\p\Phi(X)\in\mathcal{H}(H^{r}(\R)^2,H^{r-1}(\R)^2).
\end{align}  
The properties \eqref{RegP} and \eqref{Gen} enable us to use the  parabolic theory presented in \cite{L95}  to
establish the following results for \eqref{eq:F}.

\begin{thm}\label{MT1}
Let $r\in(3/2,2)$.
Given $X_0\in\cO_r$, the multiphase Muskat problem  \eqref{eq:F} has a unique maximal solution~${X\coloneqq{} X(\,\cdot\,; X_0)}$  such that
\[ X\in {\rm C}([0,T^+),\cO_r)\cap {\rm C}^1([0,T^+),  H^{r-1}(\R)^2), \] 
with $T^+=T^+(X_0)\in(0,\infty]$ denoting the maximal time of existence.
Moreover, we have:
\begin{itemize}
\item[(i)] The solution depends continuously on the initial data;
\item[(ii)] Given $k\in\N$, we have $X\in {\rm C}^\infty((0,T^+ )\times\R,\R^2)\cap {\rm C}^\infty ((0,T^+),  H^k(\R)^2);$
 \item[(iii)] If $T^+<\infty$, then 
\[
 \sup_{t\in [0,T^+)}\|X(t)\|_{H^r(\R)}=\infty\qquad\text{or}\qquad \liminf_{t\to T^+}{\rm dist\,}(\G_f^{c_\infty}(t),\G_h(t))=0. 
\]
\end{itemize}
 \end{thm}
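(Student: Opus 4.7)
My plan is to apply the abstract parabolic theory of~[L95] to the autonomous fully nonlinear evolution problem \eqref{eq:F}, considered in $H^{r-1}(\R)^2$ with domain $H^r(\R)^2$. The two ingredients it requires, namely $\Phi\in {\rm C}^\infty(\cO_r,H^{r-1}(\R)^2)$ and $-\p\Phi(X)\in\kH(H^r(\R)^2,H^{r-1}(\R)^2)$ for every $X\in\cO_r$, are precisely what \eqref{RegP} and \eqref{Gen} announce, and will be supplied by Corollary~\ref{Cor:1} and Proposition~\ref{P:pfo1}. With these in hand, [L95, Theorem~8.1.1] delivers for each $X_0\in\cO_r$ a unique maximal strict solution
\[
X\in {\rm C}([0,T^+),\cO_r)\cap {\rm C}^1([0,T^+),H^{r-1}(\R)^2),
\]
together with the continuous dependence statement~(i), where openness of $\cO_r$ in $H^r(\R)^2$ is used to make the local flow sensible.

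The blow-up alternative~(iii) is then a standard consequence of maximality. Arguing by contradiction, suppose $T^+<\infty$ while both $\sup_{[0,T^+)}\|X(t)\|_{H^r}\le M$ and $\liminf_{t\to T^+}{\rm dist\,}(\G_f^{c_\infty}(t),\G_h(t))\ge\delta>0$. Then the trajectory is confined to
\[
K_{M,\delta}\coloneqq\{(f,h)\in H^r(\R)^2\,:\,\|(f,h)\|_{H^r}\le M,\ \inf_\R(c_\infty+f-h)\ge\delta\},
\]
which is bounded in $H^r(\R)^2$ and at positive distance from $\p\cO_r$. The abstract theory yields a uniform lifespan $\tau=\tau(K_{M,\delta})>0$ for the Cauchy problem issuing from any point of $K_{M,\delta}$; picking $t_*$ with $T^+-t_*<\tau$ and restarting at $X(t_*)$ extends the solution beyond $T^+$, contradicting maximality.

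The genuinely technical part is the smoothing claim~(ii). Since $\Phi\in{\rm C}^\infty$ and the generator property holds along the whole orbit, the results of~[L95, Ch.~8] first produce temporal smoothing $X\in{\rm C}^\infty((0,T^+),H^{r-1}(\R)^2)$, which lifts to ${\rm C}^\infty((0,T^+),H^r(\R)^2)$ via $\p_tX=\Phi(X)$ and the ${\rm C}^\infty$-regularity of $\Phi$. Spatial smoothing I would obtain from a parameter trick: the contour integrals defining $\Phi$ are translation-invariant in~$x$, so for $\xi\in\R$ the shifted function $X_\xi(t,x)\coloneqq X(t,x+\xi)$ is again a solution of~\eqref{eq:F}, with initial datum $X_0(\cdot+\xi)$. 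Smooth dependence of the semiflow on initial data (a by-product of $\Phi\in{\rm C}^\infty$ through the linearised equation and its parabolic estimates) converts differentiability of $\xi\mapsto X_\xi(t)$ into extra $x$-regularity for $X(t)$ at positive times. Restarting the Cauchy problem from some $t_0>0$ where $X(t_0)\in H^s$ with $s>r$ is already known, and iterating, one bootstraps up the Sobolev scale to obtain $X\in{\rm C}^\infty((0,T^+),H^k(\R)^2)$ for every $k\in\N$; Sobolev embedding then yields joint smoothness in $(t,x)$.

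The main obstacle is exactly this bootstrap: one must check that the hypotheses \eqref{RegP} and \eqref{Gen} persist on $\cO_r\cap H^s(\R)^2$ for every $s\ge r$, so that the parameter-trick step can be repeated on each scale. The structural observation from the introduction that $\p\Phi(X)$ has only lower-order off-diagonal entries is decisive here: the generator property at any level is dictated by the diagonal blocks alone, and the mapping properties of the singular integrals $B_{m,1}^0,\,C_1,\,D_1,\ldots$ developed in Sections~\ref{Sec:2}-\ref{Sec:3} transfer stably to higher Sobolev indices, so the induction goes through without requiring any fundamentally new coercivity estimate.
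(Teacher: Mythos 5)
Your overall route (feeding \eqref{RegP} and \eqref{Gen} into the fully nonlinear parabolic theory of \cite{L95}, plus a translation/parameter trick for (ii)) is exactly the paper's, and your sketch of (ii) matches in spirit the argument the paper delegates to \cite{AM21x}. However, two steps of your proposal have genuine gaps. First, uniqueness: \cite[Theorem~8.1.1]{L95} produces a solution and guarantees uniqueness only within the weighted class $\bigcup_{\alpha\in(0,1)}{\rm C}^{\alpha}_{\alpha}((0,T],H^r(\R)^2)\cap{\rm C}([0,T],\cO_r)\cap{\rm C}^1([0,T],H^{r-1}(\R)^2)$, whereas the theorem asserts uniqueness among \emph{all} solutions in ${\rm C}([0,T^+),\cO_r)\cap{\rm C}^1([0,T^+),H^{r-1}(\R)^2)$. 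You take this for granted. The paper closes the gap by interpolation: any solution in the plain class satisfies, via the mean value theorem and $\|a\|_{H^{r'}}\leq\|a\|_{H^{r-1}}^{\alpha}\|a\|_{H^r}^{1-\alpha}$ with $\alpha=r-r'$, a H\"older bound with values in $H^{r'}(\R)^2$ for $r'\in(3/2,r)$, hence belongs to the weighted class at level $r'$, and uniqueness at level $r'$ (the whole theory holds for every exponent in $(3/2,2)$) settles the matter.

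Second, your proof of (iii) rests on a ``uniform lifespan $\tau(K_{M,\delta})>0$'' for data in a norm-bounded set at positive distance from $\p\cO_r$. For fully nonlinear problems in this framework no such statement is available off the shelf: the existence time in \cite[Theorem~8.1.1]{L95} depends on the initial datum through the resolvent constants of $\p\Phi(X_0)$ and the local behaviour of $\p\Phi$ near $X_0$, and these are not controlled by $\|X_0\|_{H^r}\leq M$ and $\inf_\R(c_\infty+f_0-h_0)\geq\delta$ alone; for instance, the localization in Lemma~\ref{L:Ap} exploits that $a(X)$ vanishes at infinity, and the rate of this decay is not uniform on norm-bounded subsets of $H^r(\R)^2$. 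Making the lifespan uniform would require redoing the generator estimates with constants uniform on $K_{M,\delta}$ -- substantial work that neither you nor the paper carries out. Moreover, your restart-and-glue step needs the restarted solution to coincide with the original on the overlap, which is again the uniqueness issue above. The paper's argument for (iii) avoids all of this: under the contradiction hypothesis the orbit is H\"older continuous into $H^{r'}(\R)^2$ up to $T^+$, so \cite[Theorem~8.1.1]{L95} applied at level $r'$ extends the solution to $[0,T')$ with $T'>T^+$ in $\cO_{r'}$, and the smoothing property (ii) at level $r'$ then shows the extension lies in ${\rm C}^1((0,T'),H^r(\R)^2)$, contradicting maximality at level $r$. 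You should replace your uniform-lifespan argument by this lower-index-plus-smoothing argument (or supply the missing uniform estimates).
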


The next result  shows, for bounded solutions with  $T^+<\infty$, that the fluid interfaces intersect in at least one point along a sequence $t_n\to T^+$. 
Moreover, using the same strategy as in \cite{CFL04, CG10},  we exclude for such solutions that the two fluid interfaces collapse along a curve segment.
 
 \begin{prop}\label{MP1}
Let $ X\in {\rm C}([0,T^+),\cO_r)\cap {\rm C}^1([0,T^+),  H^{r-1}(\R)^2)$ be a maximal solution to~\eqref{eq:F} with  $T^+<\infty$ and such that, for some $M>0$, $\|X(t)\|_{H^r}\leq M$ for all $t\in[0,T^+)$. 
Then, there exists $x_0\in\R$  with  the property that
\begin{equation}\label{pxo}
\liminf_{t\to T^+}\,(c_\infty+f(t,x_0)-h(t,x_0))=0. 
\end{equation}
Moreover, for each $x_0$ satisfying \eqref{pxo} and for each $\delta>0$, we have  
\[
 \liminf_{t\to T^+}\sup_{\{|x-x_0|\leq \delta\}}(c_\infty+f(t,x)-h(t,x))>0. 
\]
 \end{prop}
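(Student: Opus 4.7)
Set $F_t(x):=c_\infty+f(t,x)-h(t,x)>0$, the vertical distance between the two interfaces. Since $\|X(t)\|_{H^r}\le M$ uniformly and $H^r(\R)\hookrightarrow C^{1,r-3/2}(\R)$, both $f(t)$ and $h(t)$ are uniformly Lipschitz in $x$, so the Euclidean distance between the graphs $\G_f^{c_\infty}(t)$ and $\G_h(t)$ is comparable to $\inf_x F_t(x)$. Theorem~\ref{MT1}(iii) thus yields $\liminf_{t\to T^+}\inf_x F_t(x)=0$.

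The key technical step is to prove a uniform bound $\|\Phi(X)\|_{L^\infty(\R)^2}\le C(M)$ for every $X\in\cO_r$ with $\|X\|_{H^r}\le M$, \emph{independently} of $\inf F$. The subtlety is that the kernels of the singular integrals in \eqref{Phi1}--\eqref{Phi2} involve denominators $s^2+(\delta X)^2$ that degenerate as the interfaces approach; however, grouping terms as $(c_\infty+f)C_1(X)[h']-C_1(X)[hh']$ produces Hilbert-transform-type kernels, whose $L^\infty$ mapping properties follow from the embedding $H^{r-1}\hookrightarrow L^\infty$ together with the mapping properties of the operators $B^0_{m,1}$, $C_1$, $D_1$ (and their primed analogues) from Section~\ref{Sec:2}. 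I expect this to be the main obstacle: rather than the $H^{r-1}$ estimates used to establish parabolicity, one needs a uniform $L^\infty$ estimate stable all the way up to the boundary of $\cO_r$.

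Granting this bound, $t\mapsto X(t)$ is Lipschitz on $[0,T^+)$ with values in $L^\infty(\R)^2$ and therefore extends continuously to $t=T^+$, giving a limit $X^*=(f^*,h^*)\in{\rm BUC}(\R)^2$. Lower semi-continuity of the $H^r$-norm under weak-$\ast$ convergence yields $X^*\in H^r(\R)^2$; in particular $f^*,h^*$ vanish at infinity, so $F^*:=c_\infty+f^*-h^*\to c_\infty$ as $|x|\to\infty$. From $F_t\to F^*$ uniformly together with $\liminf_t\inf_x F_t=0$, I get $\inf_x F^*=0$, and this infimum is attained at some $x_0\in\R$ by continuity of $F^*$ and its behavior at infinity. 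Then $F_t(x_0)\to F^*(x_0)=0$, which proves \eqref{pxo}.

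For the no-squirt statement, suppose for contradiction that $\liminf_{t\to T^+}\sup_{|x-x_0|\le\delta}F_t(x)=0$ for some $\delta>0$. The uniform $L^\infty$ convergence then forces $F^*\equiv 0$ on $[x_0-\delta,x_0+\delta]$. The $L^\infty$ bound on $\Phi$ also yields a uniform velocity estimate $\|v_2(t)\|_{L^\infty(\0_2(t))}\le V$, and since $\mathrm{div}\,v_2=0$ the associated Lagrangian flow $\phi_t^{t_0}:\0_2(t_0)\to\0_2(t)$ is area-preserving and displaces points by at most $V(t-t_0)$. Choosing $\tau<\min\{T^+,\delta/(4V)\}$, setting $t_0:=T^+-\tau$, and putting $E:=\0_2(t_0)\cap\{|x-x_0|<\delta/4\}$, one has $\phi_t^{t_0}(E)\subset\0_2(t)\cap\{|x-x_0|<\delta/2\}$ for all $t\in[t_0,T^+)$. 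Area preservation gives $|E|=|\phi_t^{t_0}(E)|\le\int_{|x-x_0|<\delta/2}F_t(x)\,dx\to 0$ along the squirt sequence, whereas $|E|=\int_{|x-x_0|<\delta/4}F_{t_0}(x)\,dx>0$ by strict positivity of $F_{t_0}$ — the desired contradiction.
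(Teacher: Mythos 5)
Your architecture is the same as the paper's (uniform velocity bound, passage to a limiting configuration to locate a touching point, incompressibility plus finite speed to exclude touching along a segment), and your argument for the first claim is a legitimate, even slightly cleaner, variant: the paper passes to the limit of $X^2$ in $H^{r'}(\R)^2$ by interpolation, whereas you pass to the limit of $X$ itself in $L^\infty$. However, the step you yourself flag as the main obstacle is left unproved, and the route you sketch for it would fail. The estimates for $C_1,\,D_1,\,C_1',\,D_1'$ in Section~\ref{Sec:2} (Lemmas~\ref{L:MP1a}, \ref{L:MP2a}, \ref{L:MP12b}) all carry constants depending on $c_0=\mathrm{dist}(\G^{c_\infty}_{f},\G_{h})$, which is exactly the quantity that degenerates in the regime of Proposition~\ref{MP1}; no regrouping of \eqref{Phi1}--\eqref{Phi2} into ``Hilbert-transform-type kernels'' removes the $1/|s|$ singularity near $s=0$ when $\delta_{[x,s]}X$ is small, so the embedding $H^{r-1}\hookrightarrow L^\infty$ does not rescue the argument. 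The bound that is actually available — and what the paper uses — is the representation \eqref{forvelo} together with Lemma~\ref{L:Bdd}: the velocity splits into two layer potentials, each attached to a single interface, and Lemma~\ref{L:Bdd} bounds each of them on all of $\R^2$ off that interface with constants depending only on the ${\rm BUC}^\alpha\cap L_p$ norms of the density and of $f'$ (resp. $h'$), hence only on $M$ and not on $\inf F_t$. Relatedly, your claim that ``the $L^\infty$ bound on $\Phi$ also yields $\|v_2(t)\|_{L^\infty(\Omega_2(t))}\le V$'' is a non sequitur: $\Phi$ only records the normal velocity on the interfaces, and the bulk bound must again come from \eqref{forvelo} and Lemma~\ref{L:Bdd} (via \eqref{eq:S4} this also gives the $L^\infty$ bound on $dX/dt$ that your first part needs).

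The second genuine gap is in the Lagrangian implementation of the no-squirt argument. You need the flow map of $v_2$ to exist, be unique, be area-preserving, and map $\Omega_2(t_0)$ into $\Omega_2(t)$ (i.e.\ middle-phase particles never cross the interfaces) up to time $T^+$. Since $v_2$ is only ${\rm BUC}^{r-3/2}$ up to the moving, merely $C^1$, boundary, none of this is automatic: trajectories near the interface need not be unique, and the material character of the interfaces is precisely what would have to be proved from \eqref{eq:S4}. The paper avoids all of this by an Eulerian computation: with $R(t)=\delta+c_1(t-T^+)$ and $S(t)=\int_{-R(t)}^{R(t)}(c_\infty+f(t,x)-h(t,x))\,dx$, the transport/Stokes identity and $\mathrm{div}\,v_2=0$ give $S'(t)\ge 0$ because the lateral endpoints move outward at speed $c_1\ge\|v_2^1\|_\infty$, which contradicts $S(t_n)\to0$ along the squirt sequence while $S(t_{n_0})>0$. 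Your argument reaches the same contradiction, but to be complete you must either justify the flow-map properties near the Hölder-regular moving boundary or replace that step by this direct flux computation.
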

The proofs of Theorem~\ref{MT1} and Proposition~\ref{MP1} are postponed to the very end of Section~\ref{Sec:3}.

\section{Mapping properties }\label{Sec:2} 

In this section we introduce the operators $B_{m,1}^0,$ ${m=0,\,1}$, and $C_1,\, C_1',\, D_1,\, D_1'$ which appear in  \eqref{Phi1}-\eqref{Phi2} in a more general context and study the properties of these operators. 
The main goal is to establish the smoothness property \eqref{RegP}, see Corollary~\ref{Cor:1} below.

Motivated by the formulas \eqref{v12rand} and \eqref{v23rand}, we   introduce the family $B_{n,m}$, $n,\, m\in\N$, of singular integral   operators, where,  given
  Lipschitz continuous  maps ${u_1,\ldots, u_{m},\, v_1, \ldots, v_n:\mathbb{R}\to\mathbb{R}}$ and~${\oo\in L_2(\R)}$,   the operator $B_{n,m}$ is defined by  
\begin{equation}\label{BNM}
B_{n,m}(u_1,\ldots, u_m)[v_1,\ldots,v_n,\oo](x)\coloneqq{}\PV\int_\mathbb{R} 
\cfrac{\prod_{i=1}^{n}\big(\delta_{[x,s]} u_i / s\big)}{\prod_{i=1}^{m}\big[1+\big(\delta_{[ x, s]}  v_i / s\big)^2\big]} \frac{\oo(x- s)}{ s}\, d s.
\end{equation}
Here  we use the notation introduced in \eqref{notat}.
Furthermore, we define
\begin{equation}\label{Bom}
B^0_{n,m}(u)[\oo]\coloneqq{} B_{n,m}(u,\ldots,u)[u,\ldots,u,\oo]
\end{equation}
The operators $B_{n,m}$ were introduced in \cite{MBV19}, but they are also important in the study of the Stokes problem, cf.~\cite{MP2021}.
It is important to point out that $B_{0,0}=\pi H,$ where $H$ denotes the Hilbert transform.
We now recall some important properties of these operators.
\begin{lemma}\label{L:MP0}
Let  $r\in(3/2 ,2)$ be fixed. 
\begin{itemize}
\item[(i)] Given  $u_1,\ldots, u_m\in H^r(\mathbb{R})$, there exists a constant $C$ that 
  depends only on $n,\, m,\, r$,  and~${\max_{1\leq i\leq m}\|u_i\|_{H^r}}$ such that
\begin{equation*} 
\| B_{n,m}(u_1,\ldots, u_{m})[v_1,\ldots, v_n,\oo]\|_{H^{r-1}}\leq C \|\oo\|_{H^{r-1}}\prod_{i=1}^{n}\|v_i\|_{H^{r}}
\end{equation*}
for all $v_1,\ldots, v_n\in H^r(\mathbb{R})$ and $\oo\in H^{r-1}(\mathbb{R}). $\\[-1ex]
\item[(ii)] The mapping $\big[u\mapsto B^0_{n,m}(u)\big]:  H^r(\R)\to\kL(H^{r-1}(\mathbb{R}))$ is smooth.
\end{itemize}
\end{lemma}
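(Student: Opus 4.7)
The plan is to reduce both parts to a master $L_2$-bound for $B_{n,m}$, then upgrade to $H^{r-1}$ via a difference-quotient computation using the embedding $H^r(\R)\hookrightarrow C^{1+\alpha}(\R)$ (valid because $r>3/2$) together with the Gagliardo--Slobodeckij characterization of $H^{r-1}$.

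For (i), I would first establish the $L_2$-estimate
\[
\|B_{n,m}(u_1,\ldots,u_m)[v_1,\ldots,v_n,\omega]\|_{L_2} \leq C \|\omega\|_{L_2} \prod_{i=1}^n \|v_i'\|_\infty,
\]
with $C$ depending only on $n,m$ and $\max_i\|u_i'\|_\infty$. The kernel is built from (real parts of) Cauchy kernels on the Lipschitz graphs $y=u_i(x)$, multiplied by iterated commutator factors $\delta_{[x,s]}v_i/s$, so the estimate is a consequence of Calder\'on--Coifman--Meyer theory for the Cauchy integral on Lipschitz curves. The corresponding $L_2$-bounds with $m=0$ (and likewise with $n=0$) have already been established in \cite{MBV19}, and the general case follows by direct adaptation. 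To upgrade to $H^{r-1}$, I use that for $r-1\in(1/2,1)$
\[
\|f\|_{H^{r-1}}^2 \sim \|f\|_{L_2}^2 + \iint_{\R^2}\frac{|f(x)-f(y)|^2}{|x-y|^{1+2(r-1)}}\,dx\,dy.
\]
I would then write $B_{n,m}(\ldots)(x)-B_{n,m}(\ldots)(y)$ as a telescopic sum of terms in which exactly one ingredient ($\omega(x-s)$, a $\delta_{[x,s]}u_i$, or a $\delta_{[x,s]}v_i$) is replaced by its $x\to y$ difference; each such summand is again a shifted operator of $B_{n',m'}$-type, and the master $L_2$-bound together with the embedding $H^r\hookrightarrow C^1$ controls it by $\|\omega\|_{H^{r-1}}\prod_i\|v_i\|_{H^r}$. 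The main obstacle is the bookkeeping required to track the many commutator terms produced by these manipulations and to identify each one as an operator of the same form so that the induction closes.

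For (ii), smoothness of $u\mapsto B_{n,m}^0(u)$ follows from (i) together with the real-analytic dependence of the integrand on the difference quotients $\delta_{[x,s]}u/s$: the numerator is polynomial in these quantities and the denominator factors are the real-analytic function $z\mapsto 1/(1+z^2)$ evaluated at them. Formally differentiating $B_{n,m}^0(u)$ at $u\in H^r(\R)$ in a direction $\eta\in H^r(\R)$ produces a finite sum of operators of the form $B_{n',m'}(u,\ldots,\eta,\ldots,u)[u,\ldots,\eta,\ldots,u,\omega]$, where $\eta$ has been inserted in one of the slots and $n',m'$ may exceed $n,m$ when the derivative falls on a denominator factor. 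Part (i) then bounds the $\kL(H^{r-1}(\R))$-norm of each such term by a constant times $\|\eta\|_{H^r}$, locally uniformly in $u$; iterating the argument for higher derivatives yields the claimed smoothness.
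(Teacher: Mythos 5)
The paper itself does not prove this lemma: part (i) is quoted from \cite{AM21x} (Lemma~2.5) and part (ii) from \cite{MP2021} (Corollary~C.5), and those proofs do begin, as you do, with the Calder\'on--Coifman--Meyer $L_2$-bound for the Cauchy integral on Lipschitz graphs. Your outline of (ii) is also the standard route and is fine in outline (for Fr\'echet differentiability one must in addition write the remainder $B^0_{n,m}(u+\eta)-B^0_{n,m}(u)-\partial B^0_{n,m}(u)[\eta]$ as an explicit sum of $B$-type operators containing $\eta$ twice, but that is algebra once (i) is available).

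The genuine gap is in your upgrade from $L_2$ to $H^{r-1}$ in (i). Writing $T\omega(x)-T\omega(x-h)$ as a telescopic sum, the term where the difference falls on $\omega$ is indeed $B_{n,m}(u_1,\ldots,u_m)[v_1,\ldots,v_n,\omega-\tau_h\omega]$ and is handled by your master bound together with $\int_\R |h|^{-1-2(r-1)}\|\omega-\tau_h\omega\|_2^2\,dh\simeq[\omega]_{H^{r-1}}^2$. But when the difference falls on a numerator or denominator function, say $v_1$, the summand is a $B$-type operator with $v_1-\tau_h v_1$ inserted in a numerator slot, and the master bound only gives the control $\|(v_1-\tau_h v_1)'\|_\infty\,\|\omega\|_2\le [v_1']_{C^{r-3/2}}|h|^{r-3/2}\|\omega\|_2$, since $H^r(\R)\hookrightarrow C^{1+(r-3/2)}(\R)$ and nothing better. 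Inserted into the Gagliardo seminorm this produces $\int_{\{|h|\le1\}}|h|^{2(r-3/2)-1-2(r-1)}\,dh=\int_{\{|h|\le1\}}|h|^{-2}\,dh=\infty$: the H\"older gain $|h|^{r-3/2}$ misses the required gain (better than $|h|^{r-1}$) by exactly half a power, the usual $L_\infty$ versus $L_2$ discrepancy. So the induction does not close by ``bookkeeping'' alone; one needs an additional analytic ingredient, for instance a companion $L_2$-estimate in which the $L_2$-norm falls on one numerator entry while $\omega$ is measured in $L_\infty$ (so that the differenced slot is measured by $\|v_1'-\tau_h v_1'\|_2$, whose $h$-integral is $[v_1']^2_{H^{r-1}}$), or the finer splitting of the $s$-integration relative to $|h|$ with retained cancellation that the proof in \cite{AM21x} actually carries out. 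Without such an ingredient the key step of (i), and hence also the quantitative basis of (ii), is missing.
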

\begin{proof}
The claim (i) is established   \cite[Lemmas~2.5]{AM21x} an (ii) is proven in   \cite[Corollary C.5]{MP2021}.
\end{proof}

The evolution equation \eqref{eq:F} consists actually of an equation for $f$ and one for $h$ which are coupled. 
The coupling terms contain highest (first) order derivatives of both variables and they are expressed by using the aforementioned operators $C_1,\, C_1',\, D_1,\, D_1'$. 
We next introduce these operators as elements of a larger family of operators enjoying similar properties.

Given $1\leq m\in\N$ and $X_i\coloneqq{}(f_i,h_i)\in \cO_r$, $1\leq i\leq m$, we set
\begin{equation}\label{CDms}
\begin{aligned}
C_m(X_1,\ldots,X_m)[\oo](x)&\coloneqq{}\int_\R\frac{\oo(x-s)}{\prod_{i=1}^{m}\big[s^2+ (\delta_{[ x, s]}X_i)^2\big]}\, ds,\\[1ex]
 C'_m(X_1,\ldots,X_m)[\oo](x)&\coloneqq{}\int_\R\frac{\oo(x-s)}{\prod_{i=1}^{m}\big[s^2+ (\delta'_{[ x, s]}X_i)^2\big]}\, ds,\\[1ex]
 D_m(X_1,\ldots,X_m)[\oo](x)&\coloneqq{}\int_\R\frac{s\oo(x-s)}{\prod_{i=1}^{m}\big[s^2+ (\delta_{[ x, s]}X_i)^2\big]}\, ds,\\[1ex]
 D'_m(X_1,\ldots,X_m)[\oo](x)&\coloneqq{}\int_\R\frac{s\oo(x-s)}{\prod_{i=1}^{m}\big[s^2+ (\delta'_{[ x, s]}X_i)^2\big]}\, ds
\end{aligned}
\end{equation}
for $\oo\in L_2(\R)$ and $x\in\R$, where we used again the notation introduced in \eqref{notat}.
Since~$X_i\in\cO_r$ for~$1\leq i\leq m$, these operators are no longer singular. 
However,  the kernels of $D_1$ and~$D_1'$ behave  for large $s$ similarly as that of the truncated Hilbert transform~${H_\delta:L_2(\R)\to L_2(\R)}$, with $\delta>0$, 
which is defined by
\begin{align}\label{TrHi}
H_\delta [\oo](x)\coloneqq{}\frac{1}{\pi}\int_{\{|s|>\delta\}}\frac{\oo(x-s)}{s}\, ds,\qquad x\in\R.
\end{align}
We recall that, given $\delta>0$, $H_\delta$ is a Fourier multiplier with symbol $[\xi\mapsto m_\delta(\xi)]$ given by
\[
m_\delta(\xi)\coloneqq{}-\frac{2}{\pi} i{\rm sign\,}(\xi)\int_{\delta|\xi|}^\infty\frac{\sin(t)}{t}\, dt.
\]
Since $\|m_\delta\|_\infty\leq 2$ for all $\delta>0$, it follows that 
\begin{align}\label{H96}
\|H_\delta\|_{\kL(L_2(\R))}\leq 2.
\end{align}

We next study the mapping properties of the operators  $C_m,\, C_m',\, D_m,\, D_m'.$

\begin{lemma}\label{L:MP1a}
Given $1\leq m\in\N$ and $X_i\coloneqq{}(f_i,h_i)\in \cO_r$, $1\leq i\leq m$, we set
\begin{align}\label{c0}
c_0\coloneqq{}\min_{1\leq i\leq m}{\rm dist\,} (\G^{c_\infty}_{f_i},\G_{h_i}).
\end{align}
 Then, there exists a  constant $C$ that depends only on~$m$, $c_0$, and~$\max_{1\leq i\leq m}\|h_i'\|_\infty$ such that  
\begin{align}\label{CM1a}
\|C_m(X_1,\ldots,X_m)[\oo]\|_2+\|C_m'(X_1,\ldots,X_m)[\oo]\|_2\leq C\|\oo\|_2,\qquad \oo\in L_2(\R).
\end{align} 
\end{lemma}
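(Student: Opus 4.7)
The plan is to obtain an $x$-independent, integrable-in-$s$ pointwise majorant for the kernels of $C_m$ and $C_m'$, and then conclude via a Schur-type estimate. The only structural input I need is that the denominators in \eqref{CDms} are exactly squared Euclidean distances between pairs of points lying on $\G^{c_\infty}_{f_i}$ and $\G_{h_i}$, and hence can be bounded below by $c_0^2$.

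\textbf{Pointwise kernel bound.} The crucial observation is that, for each $1\le i\le m$ and all $x,s\in\R$,
\[
s^2+(\delta_{[x,s]}X_i)^2=(x-(x-s))^2+\bigl(c_\infty+f_i(x)-h_i(x-s)\bigr)^2
\]
is exactly the squared distance between the points $(x,c_\infty+f_i(x))\in\G^{c_\infty}_{f_i}$ and $(x-s,h_i(x-s))\in\G_{h_i}$, hence is at least $c_0^2$. An analogous identification yields $s^2+(\delta'_{[x,s]}X_i)^2\ge c_0^2$. Combining with the trivial bound $s^2+(\,\cdot\,)^2\ge s^2$ produces, for both kernels,
\[
\frac{1}{\prod_{i=1}^{m}\bigl[s^2+(\delta_{[x,s]}X_i)^2\bigr]}\le\frac{1}{\max(c_0^2,s^2)^m}=:\kappa(s),
\]
with $\|\kappa\|_{L_1(\R)}\le C(m,c_0)$ since $m\ge 1$ makes both the plateau near $0$ and the tail integrable.

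\textbf{Schur's test.} Writing $C_m[\oo](x)=\int_\R\tilde K(x,y)\oo(y)\,dy$ with $\tilde K(x,y)=\prod_i[(x-y)^2+(\delta_{[x,x-y]}X_i)^2]^{-1}$, the pointwise bound gives $|\tilde K(x,y)|\le\kappa(x-y)$, so the row sum $\sup_x\int|\tilde K(x,y)|\,dy$ is bounded by $\|\kappa\|_{L_1}$. For the column sum, the change of variable $s=x-y$ turns $\sup_y\int|\tilde K(x,y)|\,dx$ into $\sup_y\int_\R\prod_i[s^2+(c_\infty+f_i(y+s)-h_i(y))^2]^{-1}\,ds$; applying the same distance identification, now to the pair $(y+s,c_\infty+f_i(y+s))\in\G^{c_\infty}_{f_i}$ and $(y,h_i(y))\in\G_{h_i}$, again majorises the integrand by $\kappa(s)$, so the column sum is also controlled by $\|\kappa\|_{L_1}$. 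Schur's lemma then delivers the claimed $L_2$-bound for $C_m$, and the argument for $C_m'$ is entirely analogous after swapping the roles of the two graphs in the identification.

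\textbf{Main (non-)obstacle.} The argument is purely geometric and presents no real difficulty: everything reduces to reading each factor in the denominator as a squared interfacial distance. Notably, $\max_i\|h_i'\|_\infty$ is not actually needed at this step, so the constant depends only on $m$ and $c_0$; the formulation in the statement is simply a weaker conclusion that will be convenient later when the Lipschitz bounds naturally enter the more delicate $L_2$-estimates for the variants $D_m,\,D_m'$ and for the full nonlinearities in \eqref{Phi1}--\eqref{Phi2}.
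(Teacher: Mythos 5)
Your proof is correct, and it takes a genuinely different and cleaner route than the paper's. The paper bounds the denominator by splitting into $|s|<\delta$ and $|s|>\delta$ for a threshold $\delta = c_0/(2(\max_i\|h_i'\|_\infty+1))$: on the inner region it writes $\delta_{[x,s]}X_i$ as the vertical gap at $x$ (which is $\ge c_0$) perturbed by $\delta_{[x,s]}h_i$, and uses the Lipschitz bound $|\delta_{[x,s]}h_i|\le\|h_i'\|_\infty|s|$ to conclude $|\delta_{[x,s]}X_i|\ge c_0/2$, while on the outer region it simply uses $s^2+(\delta_{[x,s]}X_i)^2\ge s^2$; Minkowski's integral inequality then finishes. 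You instead observe that $s^2+(\delta_{[x,s]}X_i)^2$ is \emph{exactly} the squared Euclidean distance between $(x,c_\infty+f_i(x))\in\G_{f_i}^{c_\infty}$ and $(x-s,h_i(x-s))\in\G_{h_i}$, so it is $\ge c_0^2$ uniformly in $x$ and $s$ with no Lipschitz input at all; combined with the trivial bound $\ge s^2$ this gives the convolution majorant $\max(c_0^2,s^2)^{-m}\in L_1(\R)$ and Schur's test (equivalently, Minkowski again) closes the argument. Your approach buys a simpler argument and, as you note, a constant that depends only on $m$ and $c_0$ — a strictly stronger conclusion, which of course implies the stated one. The Lipschitz-based splitting in the paper is not actually needed here; it becomes essential only in Lemma~\ref{L:MP2a}, where the $D_1$ kernel is not absolutely integrable near $s=0$ and one must extract the truncated Hilbert transform after first controlling a genuine neighbourhood $|s|<\delta$ on which $|\delta_{[x,s]}X_1|$ itself (not just the full denominator) is bounded below.
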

\begin{proof}
Let
\begin{equation}\label{delta}
\delta\coloneqq{}\frac{c_0}{2(\max_{1\leq i\leq m}\|h_i'\|_\infty+1)}.
\end{equation}
Given $x\in\R$ and $|s|<\delta$, it holds
\begin{align}\label{estima}
\min\{ |\delta_{[ x, s]}X_i|,\, |\delta'_{[ x, s]}X_i|\}\geq {\rm dist\,} (\G^{c_\infty}_{f_i},\G_{h_i})-|\delta_{[ x, s]}h_i|\geq c_0/2,\qquad 1\leq i\leq m.
\end{align}
Therefore, when considering  $C_m$ (the case  $C_m'$ is similar), we get, by making use of Minkowski's integral inequality
\begin{align*}
\|C_m(X_1,\ldots,X_m)[\oo]\|_2&=\Big(\int_\R\Big|\int_\R\frac{\oo(x-s)}{\prod_{i=1}^{m}\big[s^2+ (\delta_{[ x, s]}X_i)^2\big]}\, ds\Big|^2\, dx\Big)^{1/2}\\[1ex]
&\leq\int_\R\Big(\int_\R\Big|\frac{\oo(x-s)}{\prod_{i=1}^{m}\big[s^2+ (\delta_{[ x, s]}X_i)^2\big]}\Big|^2\, dx\Big)^{1/2}\, ds\\[1ex]
&\leq\Big(\frac{2}{c_0}\Big)^{2m}\int_{\{|s|<\delta\}}\|\oo\|_2\, ds+\int_{\{|s|>\delta\}}\frac{\|\oo\|_2}{s^{2m}}\, ds\\[1ex]
&\leq C\|\oo\|_2
\end{align*}
and \eqref{CM1a} follows.
\end{proof}

It is not difficult to extend the proof of Lemma~\ref{L:MP1a} in the context of the operators $D_m$ and~$D_m'$ with~$m\geq 2$. 
The case $m=1$ is however more subtle  and requires a different strategy  which uses the estimate~\eqref{H96}.

\begin{lemma}\label{L:MP2a}
Given $1\leq m\in\N$ and $X_i\coloneqq{}(f_i,h_i)\in \cO_r$, $1\leq i\leq m$, let $c_0>0$ be the constant defined in~\eqref{c0}. 
Then,  there exists a  constant $C$ that depends only on~$r,$ $m$, $c_0$, and~$\max_{1\leq i\leq m}\|X_i\|_{H^r}$ such that  
\begin{align}\label{CM2a}
\|D_m(X_1,\ldots,X_m)[\oo]\|_2+\|D_m'(X_1,\ldots,X_m)[\oo]\|_2\leq C\|\oo\|_2,\qquad \oo\in L_2(\R),
\end{align} 
\end{lemma}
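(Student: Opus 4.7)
The plan is to treat $D_m$ and $D_m'$ in a unified way, since the pointwise lower bound \eqref{estima} applies equally to $\delta'_{[x,s]}X_i$; I focus on $D_m$. Letting $\delta$ be the cut-off from \eqref{delta}, I split the defining integral as $\int_{|s|<\delta}+\int_{|s|>\delta}$ and note the basic dichotomy: the kernel of $D_m$ decays like $|s|^{1-2m}$ at infinity, which is integrable for $m\geq 2$ but critically borderline when $m=1$. For $m\geq 2$ I mimic the proof of Lemma~\ref{L:MP1a}: the estimate \eqref{estima} yields $\prod_{i=1}^m [s^2+(\delta_{[x,s]}X_i)^2]\geq (c_0/2)^{2m}$ on $|s|<\delta$, while the trivial lower bound $s^2+(\delta_{[x,s]}X_i)^2 \geq s^2$ on each factor produces $\prod_{i=1}^m[s^2+(\delta_{[x,s]}X_i)^2]\geq s^{2m}$ on $|s|>\delta$. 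Hence the kernel is dominated by $|s|(2/c_0)^{2m}\mathbf{1}_{\{|s|<\delta\}}+|s|^{1-2m}\mathbf{1}_{\{|s|>\delta\}}$, which is $L_1(\R)$ since $2m-1\geq 3$, and Minkowski's integral inequality closes this case.

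The substantive case is $m=1$. The inner part $|s|<\delta$ is handled as above. For the outer part I use the algebraic identity
\[
\frac{s}{s^2+a^2}=\frac{1}{s}-\frac{a^2}{s(s^2+a^2)}\qquad\text{with}\ a=\delta_{[x,s]}X_1.
\]
Integrating the first summand against $\oo(x-s)$ over $\{|s|>\delta\}$ reproduces exactly $\pi H_\delta[\oo](x)$, whose $L_2(\R)$-norm is bounded by $2\pi\|\oo\|_2$ via \eqref{H96}. For the remainder, the Sobolev embedding $H^r(\R)\hookrightarrow L_\infty(\R)$ (valid since $r>1/2$) gives a uniform bound $|\delta_{[x,s]}X_1|\leq M$ with $M$ depending only on $c_\infty$, $r$ and $\|X_1\|_{H^r}$; combined with the two elementary inequalities $a^2/(s^2+a^2)\leq 1$ and $a^2/(s^2+a^2)\leq a^2/s^2$, this yields
\[
\sup_{x\in\R} \left|\frac{(\delta_{[x,s]}X_1)^2}{s(s^2+(\delta_{[x,s]}X_1)^2)}\right|\leq \min\!\Bigl(\tfrac{1}{|s|},\tfrac{M^2}{|s|^3}\Bigr),\qquad |s|>\delta.
\]
The right-hand side is integrable on $\{|s|>\delta\}$ with $L_1$-norm controlled by $\log(\max(M,\delta)/\delta)+\tfrac12$, so Minkowski's integral inequality supplies the missing $L_2$-bound for the remainder and completes the argument.

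The main obstacle is precisely the slow $1/|s|$ decay of the kernel for $m=1$, which prevents a direct Minkowski estimate. The extraction of $\pi H_\delta$ in the decomposition above is what converts this borderline term into, on the one hand, a genuine singular integral whose boundedness is supplied by the Fourier-multiplier estimate \eqref{H96}, and on the other hand a fast-decaying remainder whose boundedness is elementary.
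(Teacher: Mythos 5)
Your proof is correct and follows essentially the same route as the paper's: split at $|s|=\delta$, handle the inner part via \eqref{estima}, and on the outer part write $\tfrac{s}{s^2+a^2}=\tfrac{1}{s}-\tfrac{a^2}{s(s^2+a^2)}$ to extract the truncated Hilbert transform (bounded by \eqref{H96}) plus an $L_1$-kernel remainder. The only cosmetic deviation is the elementary bound on the remainder (the paper uses $\tfrac{a^2}{1+a^2}\leq|a|$ to get a clean $C/s^2$, while you use $\min(1,a^2/s^2)$), which changes nothing of substance.
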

\begin{proof}
The proof in the case $m\geq 2$ follows along the lines of the proof of Lemma~\ref{L:MP1a}.
We now consider the operator $D_1$ (the estimate for $D_1'$ follows similarly).
Let~$\delta>0$ be as defined in~\eqref{delta} (with $m=1$) and   set 
\[
I(x,s)\coloneqq{}\frac{s\oo(x-s)}{ s^2+ (\delta_{[ x, s]}X_1)^2}=\Big(1-\frac{(\delta_{[ x, s]}X_1/s)^2}{ 1+ (\delta_{[ x, s]}X_1/s)^2}\Big)\frac{\oo(x-s)}{s},\qquad x,\,s\in\R,\, s\neq0.
\]
With $H_\delta$ denoting the truncated Hilbert transform, see \eqref{TrHi}, we have for $x\in\R$ that
\begin{align*}
|D_1(X_1)[\oo](x)|\leq \int_{\{|s|<\delta\}}|I(x,s)|\, ds+|H_\delta[\oo](x)|
+\int_{\{\delta<|s|\}}\frac{(\delta_{[ x, s]}X_1/s)^2}{ 1+ (\delta_{[ x, s]}X_1/s)^2}\Big|\frac{\oo(x-s)}{s}\Big|\, ds.
\end{align*}
Given $|s|<\delta,$ \eqref{estima} implies $|\delta_{[ x, s]}X_1|\geq c_0/2 $ and Minkowski's integral inequality then yields
\begin{align*}
\Big\|\int_{\{|s|<\delta\}}|I(\cdot,s)|\, ds\Big\|_2\leq \int_{\{|s|<\delta\}}\Big(\int_\R|I(x,s)|^2\, dx\Big)^{1/2}\, ds\leq  \frac{8\delta^2}{c_0^2} \|\oo\|_2.
\end{align*}
Moreover, taking into account that $|\delta_{[ \cdot, s]}X_1|\leq c_\infty+\|f_1\|_\infty+\|h_1\|_\infty$, Minkowski's integral inequality leads us to 
\begin{align*}
\Big\|\int_{\{\delta<|s|\}}\frac{(\delta_{[ \cdot, s]}X_1/s)^2}{ 1+ (\delta_{[ \cdot, s]}X_1/s)^2}\Big|\frac{\oo(\cdot-s)}{s}\Big|\, ds\Big\|_2
\leq (c_\infty+\|f_1\|_\infty+\|h_1\|_\infty)\int_{\{\delta<|s|\}}\frac{\|\oo\|_2}{s^2}\, ds\leq C\|\oo\|_2.
\end{align*}
Recalling \eqref{H96}, we conclude that \eqref{CM2a} is satisfied. 
\end{proof}

As a consequence of Lemma~\ref{L:MP1a} and Lemma~\ref{L:MP2a} we obtain the following result.
\begin{cor}\label{R:Kor} Given $1\leq m\in\N$, it holds that
\begin{align}\label{LLc1}
C_m,\, D_m,\,C_m',\, D_m'\in {\rm C}^{1-}(\cO_r^m,\kL(L_2(\R))).
\end{align}
\end{cor}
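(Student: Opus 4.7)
The plan is to combine the boundedness statements of Lemma~\ref{L:MP1a} and Lemma~\ref{L:MP2a} with a direct Lipschitz estimate on the differences. Those lemmas already identify the operators as elements of $\kL(L_2(\R))$ for each admissible tuple, so only local Lipschitz dependence on $(X_1,\ldots,X_m)$ remains to be shown. I would fix a small neighborhood $\mathcal{U}$ of a reference point in $\cO_r^m$ on which the constant $c_0$ of \eqref{c0} is uniformly bounded below and $\max_i \|X_i\|_{H^r}$ is uniformly bounded above, and then reduce to the case where only one coordinate $X_k=(f_k,h_k)$ is replaced by $\tilde X_k=(\tilde f_k,\tilde h_k)$ by a standard telescoping. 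Applying $1/a-1/b=(b-a)/(ab)$ to the $k$th factor in the denominator together with the factorization $(\delta_{[x,s]}X_k)^2-(\delta_{[x,s]}\tilde X_k)^2=[\delta_{[x,s]}X_k-\delta_{[x,s]}\tilde X_k]\cdot[\delta_{[x,s]}X_k+\delta_{[x,s]}\tilde X_k]$, the kernel difference acquires an extra factor $\delta_{[x,s]}X_k-\delta_{[x,s]}\tilde X_k=(f_k-\tilde f_k)(x)-(h_k-\tilde h_k)(x-s)$, which via the Sobolev embedding $H^r(\R)\hookrightarrow L_\infty(\R)$ is bounded uniformly in $x,s$ by a constant times $\|X_k-\tilde X_k\|_{H^r}$, supplying the Lipschitz prefactor.

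For $C_m$ and $C_m'$, the resulting difference kernel still decays at least as fast as $|s|^{-2m}$ at infinity and satisfies, on $\{|s|<\delta\}$ (with $\delta$ from \eqref{delta}), the same lower bound on the denominators as in the proof of Lemma~\ref{L:MP1a}. Minkowski's integral inequality then yields a bound of the form $\|(C_m(X)-C_m(\tilde X))[\oo]\|_2\leq C\max_k\|X_k-\tilde X_k\|_{H^r}\|\oo\|_2$, and similarly for $C_m'$. The case $D_m,\,D_m'$ with $m\geq 2$ is entirely analogous, since those kernels are already absolutely integrable in $s$.

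The main obstacle will be the case $D_1,\,D_1'$, since the boundedness in Lemma~\ref{L:MP2a} relied on the truncated Hilbert transform $H_\delta$ rather than on absolute integrability of the kernel. The crucial observation is that \emph{taking differences removes this borderline behavior}: the formula
\[
(D_1(X_1)-D_1(\tilde X_1))[\oo](x)=\int_\R \frac{s\,[(\delta_{[x,s]}\tilde X_1)^2-(\delta_{[x,s]}X_1)^2]}{[s^2+(\delta_{[x,s]}X_1)^2][s^2+(\delta_{[x,s]}\tilde X_1)^2]}\,\oo(x-s)\,ds
\]
exhibits a kernel whose supremum over $x$ is bounded by a constant times $|s|\,\|X_1-\tilde X_1\|_{H^r}$ on $\{|s|<\delta\}$ (using \eqref{estima} to bound both denominators from below by $c_0^4/16$) and by a constant times $\|X_1-\tilde X_1\|_{H^r}/|s|^3$ on $\{|s|>\delta\}$ (using $s^2+(\delta_{[x,s]}X_1)^2\geq s^2$ together with boundedness of $\delta_{[x,s]}X_1+\delta_{[x,s]}\tilde X_1$). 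Both pieces are integrable in $s$, so Minkowski's integral inequality directly delivers the Lipschitz estimate for $D_1$ without invoking $H_\delta$; the same argument handles $D_1'$, and \eqref{LLc1} follows.
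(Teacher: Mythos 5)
Your argument is correct, and it rests on the same essential observation the paper exploits: differencing raises the decay order in $s$, so that the borderline $D_1$ case, which required the truncated Hilbert transform for \emph{boundedness} in Lemma~\ref{L:MP2a}, becomes absolutely integrable once differences are taken. The presentation, however, is somewhat different. The paper does not estimate the difference kernels directly; instead it derives the closed algebraic identities \eqref{LLC}--\eqref{LLC'}, which express $E_m(X_1,\ldots,X_m)-E_m(\wt X_1,\ldots,\wt X_m)$ as a finite sum of $E_{m+1}$ operators applied to $\oo$ multiplied by bounded functions and premultiplied by bounded coefficients, after which the Lipschitz bound follows by simply citing Lemma~\ref{L:MP1a} and Lemma~\ref{L:MP2a} for $E_{m+1}$. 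You unroll that reduction into a direct Minkowski estimate on the difference kernel. For the Corollary in isolation, your version is a bit more self-contained; the paper's identities buy reusability — they are invoked again in the proofs of Lemma~\ref{L:MP12b} and (implicitly, via the $E_{n,m,p}$ formalism) Lemma~\ref{L:MPsm}, so deriving them once is efficient. One small wording issue: in the region $\{|s|<\delta\}$ the bound $c_0^4/16$ applies to the \emph{product} of the two denominators (each is $\geq c_0^2/4$ by \eqref{estima}), not to "both denominators"; this does not affect the validity of your estimate.
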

\begin{proof}
Let   $X_i=(f_i,h_i)$, $\wt X_i=(\wt f_i,\wt h_i)\in \cO_r$, $1\leq i\leq m$,  $\oo\in L_2(\R),$  and~$E_m\in \{C_m,\, D_m\}.$ 
It then follows
\begin{equation}\label{LLC}
\begin{aligned}
&\hspace{-0.5cm}E_m(X_1,\ldots,X_m)[\oo]-E_m(\wt X_1,\ldots,\wt X_m)[\oo]\\[1ex]
&=\sum_{j=1}^m\Big( (2c_\infty+\wt f_j+f_j)(\wt f_j-f_j)E_{m+1}(\wt X_1,\ldots, \wt X_j,X_j,\ldots,X_m)[\oo]\\[-1ex]
&\hspace{1.35cm}-(\wt f_j-f_j)E_{m+1}(\wt X_1,\ldots, \wt X_j,X_j,\ldots,X_m)[(\wt h_j+h_j)\oo]\\[1ex]
&\hspace{1.35cm}-(2c_\infty+\wt f_j+f_j)E_{m+1}(\wt X_1,\ldots, \wt X_j,X_j,\ldots,X_m)[(\wt h_j-h_j)\oo]\\[1ex]
&\hspace{1.35cm}+ E_{m+1}(\wt X_1,\ldots, \wt X_j,X_j,\ldots,X_m)[(\wt h_j^2-h_j^2)\oo]\Big),
\end{aligned}
\end{equation}
respectively
\begin{equation}\label{LLC'}
\begin{aligned}
&\hspace{-0.5cm}E'_m(X_1,\ldots,X_m)[\oo]-E'_m(\wt X_1,\ldots,\wt X_m)[\oo]\\[1ex]
&=\sum_{j=1}^m\Big( (\wt h_j^2-h_j^2)E'_{m+1}(\wt X_1,\ldots, \wt X_j,X_j,\ldots,X_m)[\oo]\\[-1ex]
&\hspace{1.35cm}-(\wt h_j-h_j)E'_{m+1}(\wt X_1,\ldots, \wt X_j,X_j,\ldots,X_m)[(2c_\infty+\wt f_j+f_j)\oo]\\[1ex]
&\hspace{1.35cm}-(\wt h_j+h_j)E'_{m+1}(\wt X_1,\ldots, \wt X_j,X_j,\ldots,X_m)[(\wt f_j-f_j)\oo]\\[1ex]
&\hspace{1.35cm}+ E'_{m+1}(\wt X_1,\ldots, \wt X_j,X_j,\ldots,X_m)[(2c_\infty+\wt f_j+f_j)(\wt f_j-f_j)\oo]\Big).
\end{aligned}
\end{equation}
Combining \eqref{LLC}, \eqref{LLC'}, Lemma~\ref{L:MP1a}, and Lemma~\ref{L:MP2a}  we conclude that \eqref{LLc1} holds true.
\end{proof}

We prove next that the operators $C_m,\, C_m',\, D_m,\, D_m'$ map, for given~${X_i\in \cO_r}$, $1\leq i\leq m$, the definition domain $L_2(\R)$  actually into $H^1(\R).$
\begin{lemma}\label{L:MP12b}
Given $1\leq m\in\N$ and $X_i\coloneqq{}(f_i,h_i)\in \cO_r$, $1\leq i\leq m$, let $c_0>0$ be the constant defined in~\eqref{c0}. 
Given $E_m\in \{C_m,\, D_m\},$  there exists a positive constant $C$ that depends only on~$r,$ $m$, $c_0$, and~$\max_{1\leq i\leq m}\|X_i\|_{H^r}$ such that  
\begin{align}\label{CM12b}
\|E_m(X_1,\ldots,X_m)[\oo]\|_{H^1}+\|E_m'(X_1,\ldots,X_m)[\oo]\|_{H^1}\leq C\|\oo\|_2,\qquad \oo\in L_2(\R).
\end{align} 
Moreover, $C_m,\, D_m,\,C_m',\, D_m'\in {\rm C}^{1-}(\cO_r^m,\kL(L_2(\R), H^1(\R))).$ 
\end{lemma}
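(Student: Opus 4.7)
The plan is to upgrade the $L_2\to L_2$ bounds from Lemma~\ref{L:MP1a} and Lemma~\ref{L:MP2a} to the claimed $L_2\to H^1$ bounds by producing matching estimates for $\partial_x E_m[\oo]$, where $E_m\in\{C_m,D_m,C_m',D_m'\}$. After the change of variable $t=x-s$, the operators take the form $\int_\R\oo(t)K_m(x,t)\,dt$, with a kernel $K_m$ that is smooth in $x$ for $t$ fixed and depends on $x$ only through the terms $f_i(x)$ appearing in $\delta_{[x,x-t]}X_i$ (or $\delta'_{[x,x-t]}X_i$). The decay of the differentiated kernels justifies differentiation under the integral, and reverting to $s=x-t$ then produces explicit representations of $\partial_x E_m[\oo]$ that I would estimate in $L_2$ via Minkowski's integral inequality, in the spirit of Lemma~\ref{L:MP1a}.

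For $C_m$ with $m\geq 1$ and for $D_m,D_m'$ with $m\geq 2$, a direct computation decomposes $\partial_x E_m[\oo]$ into integrals of the form
\[ \int_\R \frac{R_i(x,s)\,\oo(x-s)}{[s^2+(\delta_{[x,s]}X_i)^2]^{2}\prod_{j\neq i}[s^2+(\delta_{[x,s]}X_j)^2]}\,ds, \]
where $R_i(x,s)$ is a polynomial of degree at most two in $s$ with coefficients bounded by $\max_i\|X_i\|_\infty$, and the denominator has total degree $2(m+1)$ in $s$. Using $|\delta_{[x,s]}X_i|\geq c_0/2$ on $\{|s|<\delta\}$ (with $\delta$ as in \eqref{delta}) for the small-$s$ bound and the polynomial denominator for the large-$s$ bound, each such kernel is uniformly bounded for $|s|<\delta$ and of order $|s|^{-3}$ or better for $|s|>\delta$, hence integrable in $s$; Minkowski's inequality delivers the $L_2$ bound. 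Prefactors $f_i'(x)$ arising when $\partial_x$ falls on $f_i$ are controlled by $\|f_i'\|_\infty\leq C\|f_i\|_{H^r}$ via the embedding $H^{r-1}\hookrightarrow L^\infty$ ($r>3/2$).

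The critical case, and the main obstacle, is $D_1$ (and analogously $D_1'$), where differentiation produces the kernel $s^2/[s^2+(\delta_{[x,s]}X_1)^2]^{2}$ that decays only like $|s|^{-2}$ at infinity and is not amenable to direct Minkowski. Following the strategy of Lemma~\ref{L:MP2a}, I would apply the algebraic identity
\[ \frac{s^2}{[s^2+(\delta_{[x,s]}X_1)^2]^{2}}=\frac{1}{s^2+(\delta_{[x,s]}X_1)^2}-\frac{(\delta_{[x,s]}X_1)^2}{[s^2+(\delta_{[x,s]}X_1)^2]^{2}}. \]
The first summand reproduces $C_1(X_1)[\oo]$, already $L_2$-bounded by Lemma~\ref{L:MP1a}, and the second has kernel dominated by $1/[s^2+(\delta_{[x,s]}X_1)^2]$, which is integrable in $s$. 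The remaining cross-term, $f_1'(x)\!\int_\R s(\delta_{[x,s]}X_1)\oo(x-s)/[s^2+(\delta_{[x,s]}X_1)^2]^{2}\,ds$, is handled via the AM--GM estimate $|s\,\delta_{[x,s]}X_1|\leq (s^2+(\delta_{[x,s]}X_1)^2)/2$, reducing it once again to a $C_1$-type integral, with the $L^\infty$-prefactor $f_1'(x)$ being harmless.

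For the local Lipschitz claim $C_m,D_m,C_m',D_m'\in{\rm C}^{1-}(\cO_r^m,\kL(L_2(\R),H^1(\R)))$, I would recycle the telescoping identities \eqref{LLC} and \eqref{LLC'}. Each summand on their right-hand side has the form $(H^r\text{-factor carrying }\widetilde f_j-f_j\text{ or }\widetilde h_j-h_j)\cdot E_{m+1}(\ldots)[g\oo]$, where $g$ is a further $H^r$-function. Since $H^r$ is a pointwise multiplier on both $L_2$ and $H^1$ when $r>1/2$, and since the $L_2\to H^1$ bound just established (applied with $m+1$ in place of $m$) controls $\|E_{m+1}(\ldots)[g\oo]\|_{H^1}$ by a constant depending only on $r,m,c_0,\max_i\|X_i\|_{H^r}$ times $\|\oo\|_2$, the local Lipschitz estimate in $\kL(L_2(\R),H^1(\R))$ follows with Lipschitz modulus controlled by $\|(X_1,\ldots,X_m)-(\widetilde X_1,\ldots,\widetilde X_m)\|_{H^r}$.
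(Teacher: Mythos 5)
Your argument is essentially correct and reaches the same formulas as the paper, but by a slightly different computational route. The paper establishes $(E_m[\oo])'$ via difference quotients $(E_m[\oo]-\tau_\xi E_m[\oo])/\xi$, passing to the limit using the Lipschitz property from Corollary~\ref{R:Kor}; this yields an expression containing $E_m[\oo']$, which is then eliminated by an integration by parts in $s$. Your route — change of variable $t=x-s$ so that $\oo(t)$ has no $x$-dependence, then differentiate the kernel — reaches the same decomposition directly without the integration-by-parts step, because the $\partial_x$-derivative of $(x-t)^2$ already produces the $D_{m+1}$-type term, while $\partial_x$ hitting $f_i(x)$ produces the $C_{m+1}$-type terms. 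Both proofs then rest on the same pillars: the $L_2$-bounds of Lemmas~\ref{L:MP1a} and~\ref{L:MP2a}, the algebraic identity $s^2/(s^2+a^2)^2 = 1/(s^2+a^2) - a^2/(s^2+a^2)^2$ to handle the borderline kernel in $D_1$ and $D_1'$ (which the paper encodes implicitly in the integration-by-parts formula that produces the $(1-2m)C_m$ term), and the telescoping identities~\eqref{LLC}--\eqref{LLC'} for the Lipschitz claim.

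Two small points you should tighten. First, your remark that the kernel ``depends on $x$ only through the terms $f_i(x)$'' is imprecise: after substituting $t=x-s$ the kernel is $\prod_i[(x-t)^2+(c_\infty+f_i(x)-h_i(t))^2]^{-1}$, so $x$ enters both through $(x-t)$ and through $f_i(x)$; your subsequent decomposition correctly accounts for both contributions, but the stated description is misleading. Second, the passage ``the decay of the differentiated kernels justifies differentiation under the integral'' is asserted but not really argued for $\oo\in L_2$; the clean way (which is also what the paper does) is to prove the identity for $\oo\in {\rm C}_0^\infty(\R)$ — where dominated convergence of the difference quotient is immediate — establish the $L_2$-bound on the resulting expression, and then conclude by density using the $L_2\to L_2$ estimate already in place. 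With those two adjustments your proof is complete.
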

\begin{proof}
Let $\{\tau_\xi\}_{\xi\in\R}$ denote the group of right translations and assume first that $\oo\in {\rm C}^\infty_0(\R)$.
 Given $0\neq\xi\in\R$, the formula~\eqref{LLC} leads us to
\begin{align*}
&\hspace{-0.5cm}\frac{E_m(X_1,\ldots,X_m)[\oo]-\tau_\xi(E_m(X_1,\ldots,X_m)[\oo])}{\xi}\\[1ex]
&=E_m(X_1,\ldots,X_m)\Big[\frac{\oo-\tau_\xi\oo}{\xi}\Big]\\[1ex]
&\hspace{0,45cm}+\sum_{j=1}^m\Big( (2c_\infty+\tau_\xi f_j+f_j)\frac{\tau_\xi f_j-f_j}{\xi}E_{m+1}(\tau_\xi X_1,\ldots, \tau_\xi X_j,X_j,\ldots,X_m)[\tau_\xi\oo]\\[1ex]
&\hspace{1.8cm}-\frac{\tau_\xi f_j-f_j}{\xi}E_{m+1}(\wt X_1,\ldots, \wt X_j,X_j,\ldots,X_m)[(\tau_\xi h_j+h_j)\tau_\xi\oo]\\[1ex]
&\hspace{1.8cm}-(2c_\infty+\tau_\xi f_j+f_j)E_{m+1}(\tau_\xi X_1,\ldots, \tau_\xi X_j,X_j,\ldots,X_m)\Big[\frac{\tau_\xi h_j-h_j}{\xi}\tau_\xi\oo\Big]\\[1ex]
&\hspace{1.8cm}+ E_{m+1}(\tau_\xi X_1,\ldots, \tau_\xi X_j,X_j,\ldots,X_m)\Big[\frac{(\tau_\xi h_j)^2-h_j^2}{\xi}\tau_\xi\oo\Big]\Big).
\end{align*}
Recalling~\eqref{LLc1}, we may pass to the limit $\xi\to0$ on the right of the latter equation and conclude that $E_m(X_1,\ldots,X_m)[\oo]\in H^1(\R)$, with 
\begin{align*}
&\hspace{-0.5cm}(E_m(X_1,\ldots,X_m)[\oo])'\\[1ex]
&=E_m(X_1,\ldots,X_m)[\oo']\\[1ex]
&\hspace{0,45cm}-2\sum_{j=1}^m\big( (c_\infty+ f_j)f_j'E_{m+1}( X_1,\ldots,X_m, X_j)[\oo]-f_j'E_{m+1}(X_1,\ldots,X_m, X_j)[h_j\oo]\\[1ex]
&\hspace{1.8cm}-( c_\infty+f_j)E_{m+1}( X_1,\ldots, X_m,X_j)[h_j'\oo]+ E_{m+1}(  X_1,\ldots ,X_m, X_j)[h_jh_j'\oo]\big).
\end{align*}
All the terms on the right are well-defined (and belong to $L_2(\R)$) when merely $\oo\in L_2(\R)$ except for  $E_m(X_1,\ldots,X_m)[\oo']$.
However, using integration by parts we can rewrite this term as
\begin{align*}
&\hspace{-0.5cm}C_m(X_1,\ldots,X_m)[\oo']\\[1ex]
&=-2\sum_{j=1}^m\big( D_{m+1}( X_1,\ldots,X_m, X_j)[\oo] +(c_\infty+ f_j)C_{m+1}( X_1,\ldots,X_m, X_j)[h_j'\oo]\\[1ex]
&\hspace{1.9cm} -C_{m+1}(  X_1,\ldots ,X_m, X_j)[h_jh_j'\oo]\big),
\end{align*}
respectively
\begin{align*}
&\hspace{-0.5cm}D_m(X_1,\ldots,X_m)[\oo']\\[1ex]
&=(1-2m)C_{m}( X_1,\ldots,X_m)[\oo]\\[1ex]
&\hspace{0,45cm} -2\sum_{j=1}^m\big((c_\infty+f_j) D_{m+1}( X_1,\ldots,X_m, X_j)[h_j'\oo] -D_{m+1}( X_1,\ldots,X_m, X_j)[h_jh_j'\oo]\\[1ex]
&\hspace{1.9cm} -(c_\infty+f_j)^2 C_{m+1}( X_1,\ldots,X_m, X_j)[ \oo] -C_{m+1}( X_1,\ldots,X_m, X_j)[h_j^2 \oo]\\[1ex]
&\hspace{1.9cm} +2(c_\infty+f_j) C_{m+1}( X_1,\ldots,X_m, X_j)[h_j \oo]\big).
\end{align*}
Combining the last three identities, Lemma~\ref{L:MP1a}, Lemma~\ref{L:MP2a}, and using a standard density argument  we get that \eqref{CM12b} holds for $E_m$.
The claim for the operator $E_m'$  follows similarly.
Finally, the Lipschitz continuity property is obtained from \eqref{CM12b} and~\eqref{LLC}-\eqref{LLC'}.
\end{proof}

Since our goal is to establish the smoothness of $\Phi$, cf. \eqref{RegP}, we next prove that operators~$E_m$ and~$E_m'$ with $E_m\in\{C_m,\, D_m\}$   depend  smoothly on the  variable $X\in\cO_r$.
This requires some additional notation.
Given $Y\coloneqq{}(u,v)\in H^r(\R)^2,$ we set
\begin{align}\label{notat2}
\ov\delta_{[ x, s]}Y\coloneqq{} u(x)-v(x-s)\quad\text{and}\quad \ov\delta'_{[ x, s]}Y\coloneqq{} v(x)-u(x-s),\qquad x,\, s\in\R.
\end{align}

Given $n,\, m,\, p\in\N$, $m\geq1$, $X_i\in \cO_r$, $1\leq i\leq m+p$,   $Y_i\in H^r(\R)^2$, $1\leq i\leq n,$  $\oo\in L_2(\R)$, and~${E\in\{C,\, D\}},$
 we set
\begin{equation*}
\begin{aligned}
E_{n,m,p}(X_1,\ldots,X_{m+p})[Y_1,\ldots,Y_n,\oo](x)&\coloneqq{}\int_\R\frac{s^j\oo(x-s)\big(\prod_{i=m+1}^{m+p}\delta_{[ x, s]}X_i\big)\prod_{i=1}^n\ov\delta_{[ x, s]}Y_i}{\prod_{i=1}^{m}\big[s^2+ (\delta_{[ x, s]}X_i)^2\big]}\, ds,\\[1ex]
 E'_{n,m,p}(X_1,\ldots,X_{m+p})[Y_1,\ldots,Y_n,\oo](x)&\coloneqq{}\int_\R\frac{s^j\oo(x-s)\big(\prod_{i=m+1}^{m+p}\delta'_{[ x, s]}X_i\big)\prod_{i=1}^n\ov\delta'_{[ x, s]}Y_i}{\prod_{i=1}^{m}\big[s^2+ (\delta'_{[ x, s]}X_i)^2\big]}\, ds
\end{aligned}
\end{equation*}
 for $x\in\R$, where $j=0$ if $E=C$ and $j=1$ for $E=D$. 
We point out that, given~${E\in\{C,\, D\}}$ and~$m\geq 1$, we have ${E_{0,m,0}(X)=E_m(X)}$ and ${E'_{0,m,0}(X)=E'_m(X)}$.
Hence  the latter formulas extend our previous notation introduced in~\eqref{CDms}.
 Setting $Y_i=(u_i,v_i)$, $1\leq i\leq n,$ it holds that
\begin{align*}
&\hspace{-0.5cm}E_{n,m,p}(X_1,\ldots,X_{m+p})[Y_1,\ldots,Y_n,\oo]\\[1ex]
&=\sum_{S\subset\{1,\ldots,n\}}(-1)^{|S^c|}\Big(\prod_{j\in S}u_j\Big)E_{0,m,p}(X_1,\ldots,X_{m+p})\Big[\oo\prod_{j\in S^c}v_j\Big],\\[1ex]
&\hspace{-0.5cm}E_{n,m,p}'(X_1,\ldots,X_{m+p})[Y_1,\ldots,Y_n,\oo]\\[1ex]
&=\sum_{S\subset\{1,\ldots,n\}}(-1)^{|S^c|}\Big(\prod_{j\in S}v_j\Big)E_{0,m,p}'(X_1,\ldots,X_{m+p})\Big[\oo\prod_{j\in S^c}u_j\Big],
\end{align*}
where for each $S\subset \{1,\ldots,n\}$ we set $S^c\coloneqq{}\{1,\ldots,n\}\setminus S$.

Moreover, letting $X_j\coloneqq{}(f_j,h_j)$, $m+1\leq j\leq m+p$, it holds that 
\begin{align*}
&\hspace{-0.5cm}E_{0,m,p}(X_1,\ldots,X_{m+p})[\oo]\\[1ex]
&=\sum_{S\subset\{m+1,\ldots,m+p\}}(-1)^{|S^c|}\Big(\prod_{j\in S}(c_\infty+f_j)\Big)E_{m}(X_1,\ldots,X_{m})\Big[\oo\prod_{j\in S^c}h_j\Big],\\[1ex]
&\hspace{-0.5cm}E_{0,m,p}'(X_1,\ldots,X_{m+p})[\oo]\\[1ex]
&=\sum_{S\subset\{m+1,\ldots,m+p\}}(-1)^{|S^c|}\Big(\prod_{j\in S}(h_j-c_\infty)\Big)E_{m}'(X_1,\ldots,X_{m})\Big[\oo\prod_{j\in S^c}f_j\Big].
\end{align*}

 Recalling Lemma~\ref{L:MP12b}, we deduce for $E\in\{C, \,C',\, D,\, D'\}$, that
 \begin{align}\label{estMUL}
 \|E_{n,m,p}(X_1,\ldots,X_{m+p})[Y_1,\ldots,Y_n,\cdot]\|_{\kL(L_2(\R), H^1(\R))}\leq C\prod_{i=1}^n\|Y_i\|_{H^r},
\end{align}  
with  $C$ is independent of $Y_i$, $1\leq i\leq n$.

Finally, given~$E\in\{C, \,C',\, D,\, D'\}$,  $n,\, m,\, p\in\N$, $m\geq1$, $Y_i\in H^r(\R)^2$, $1\leq i\leq n,$ and~${X\in\cO_r}$, we define
\begin{align}\label{Emnk}
E_{m,p}^n(X)[Y_1,\ldots,Y_n]\coloneqq{} E_{n,m,p}(X,\ldots,X)[Y_1,\ldots,Y_n,\cdot]\in\kL(L_2(\R), H^1(\R)).
\end{align} 
The estimate \eqref{estMUL} shows that  $E_{m,p}^n:\cO_r\to \kL^n_{\rm sym}(H^r(\R)^2,\kL(L_2(\R), H^1(\R)))$ (if $n=0$ we identify $\kL^n_{\rm sym}(H^r(\R)^2,\kL(L_2(\R), H^1(\R)))$ with  
$\kL(L_2(\R), H^1(\R))$).
In the next lemma we establish  the Fr\'echet differentiability  of $E_{m,p}^n.$
\begin{lemma}\label{L:MPsm}
Given  $n,\, m,\, p\in\N$, $m\geq1$, and $X\in\cO_r$, the   operator $E_{m,p}^n$ is Fr\'echet differentiable in~$X$ and its Fr\'echet derivative is given by 
\begin{align*}
\p E_{m,p}^n(X)[Y][Y_1,\ldots,Y_n]=pE_{m,p-1}^{n+1}(X)[Y_1,\ldots,Y_n, Y]-2m E_{m+1,p+1}^{n+1}(X)[Y_1,\ldots,Y_n, Y]
\end{align*}
for $Y,\,Y_1,\ldots, Y_n\in H^r(\R)^2$. Consequently, for each $n,\, m,\, p\in\N$, $m\geq1$, we have 
   $$E_{m,p}^n\in {\rm C}^\infty(\cO_r, \kL^n_{\rm sym}(H^r(\R)^2,\kL(L_2(\R), H^1(\R)))).$$
\end{lemma}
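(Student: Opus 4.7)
My plan is to establish the derivative formula by direct expansion and then bootstrap to smoothness by iterating the argument. Fix $X\in\cO_r$ and take $Y\in H^r(\R)^2$ small enough that $X+tY\in\cO_r$ for all $t\in[0,1]$. The crucial starting identity is
\begin{align*}
\delta_{[x,s]}(X+Y)=\delta_{[x,s]}X+\ov\delta_{[x,s]}Y,
\end{align*}
which explains why the $Y$-slots of $E_{n+1,m,p}$ are built from $\ov\delta_{[x,s]}$: any perturbation of $X$ inside a $\delta$-factor naturally produces an $\ov\delta$-factor in $Y$.

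First I would compute the difference $E_{n,m,p}(X+Y,\ldots,X+Y)-E_{n,m,p}(X,\ldots,X)$ by separately varying the numerator product $\prod_1^p\delta_{[x,s]}(\,\cdot\,)$ and the denominator product $\prod_1^m[s^2+(\delta_{[x,s]}(\,\cdot\,))^2]$. Two elementary Taylor-type identities drive the computation: $(a+b)^p-a^p=pa^{p-1}b+O(b^2)$ for the numerator, and $A^{-m}-B^{-m}=mA^{-m-1}(B-A)+O((B-A)^2)$ for the denominator, where $A=s^2+(\delta_{[x,s]}X)^2$ and $B-A=2\delta_{[x,s]}X\cdot\ov\delta_{[x,s]}Y+(\ov\delta_{[x,s]}Y)^2$. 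Collecting the linear-in-$Y$ contributions produces $pE_{m,p-1}^{n+1}(X)[Y_1,\ldots,Y_n,Y]$ from the numerator side, while on the denominator side the factor $2\delta_{[x,s]}X$ generated by $B-A$ is absorbed as the $(p+1)$-th numerator factor and the denominator exponent rises from $m$ to $m+1$, producing $-2mE_{m+1,p+1}^{n+1}(X)[Y_1,\ldots,Y_n,Y]$.

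Next I would verify that the remainder has operator norm $O(\|Y\|_{H^r}^2)$ in $\kL(L_2(\R),H^1(\R))$. By the same kind of algebraic reorganization that powered Corollary~\ref{R:Kor} and Lemma~\ref{L:MP12b}, every leftover term can be rewritten as an $E_{n',m',p'}$-type operator (evaluated at $X$ or $X+Y$, with $m'$ and $p'$ in a controlled range) in which at least two of the $n'-n$ extra slots are filled by $Y$ itself. The uniform bound \eqref{estMUL} is valid in a small $H^r$-neighborhood of $X$ since both $c_0$ and $\max_i\|X_i\|_{H^r}$ remain under control there, and it delivers the desired quadratic estimate, establishing Fr\'echet differentiability with the claimed derivative.

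Finally, smoothness follows by induction on the order of differentiation: the formula expresses $\p E_{m,p}^n$ as a linear combination of two operators of the same structural type, so every formal Fr\'echet derivative of order $k$ is a finite linear combination of $E_{\tilde m,\tilde p}^{n+k}(X)$'s, each continuous in $X$ by the Lipschitz argument underlying Corollary~\ref{R:Kor}. The anticipated main obstacle is the bookkeeping in the second step: organizing the combined Taylor remainders from numerator and denominator into finitely many $E_{n',m',p'}$-type integrals with the correct multilinearity structure and $Y$-degree at least two, so that \eqref{estMUL} applies uniformly.
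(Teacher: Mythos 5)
Your proposal is correct and follows essentially the same route as the paper: subtract the candidate linear term, reorganize the difference algebraically into finitely many $E_{n',m',p'}$-type operators carrying at least two $Y$-slots, bound the remainder by $C\|Y\|_{H^r}^2\prod_i\|Y_i\|_{H^r}$ in $\kL(L_2(\R),H^1(\R))$ via the uniform estimate \eqref{estMUL}/Lemma~\ref{L:MP12b}, and then obtain smoothness by iterating, since the derivative is again of the same structural type. The paper merely records the exact algebraic identity for the remainder (the terms $R_{1,j}$, $R_{2,j}$, $R_{3,j,l}$) that your Taylor-expansion bookkeeping would produce.
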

\begin{proof}
Setting 
\begin{align*}
R(X,Y)[Y_1,\ldots,Y_n]&\coloneqq{}E_{m,p}^n(X+Y)[Y_1,\ldots,Y_n]-E_{m,p}^n(X)[Y_1,\ldots,Y_n]\\[1ex]
&\hspace{0.55cm}-pE_{m,p-1}^{n+1}(X)[Y_1,\ldots,Y_n, Y]+2m E_{m+1,p+1}^{n+1}(X)[Y_1,\ldots,Y_n, Y],
\end{align*}
elementary algebraic manipulations lead us to the following identity
\begin{align*}
R(X,Y)[Y_1,\ldots,Y_n]&=\sum_{j=0}^{p-1}(p-j-1)R_{1,j}-\sum_{j=0}^{m-1}R_{2,j}+\sum_{j=0}^{m-1}\sum_{l=0}^{m-j-1}2R_{3,j,l},
\end{align*}
where
\begin{align*}
R_{1,j}&=E_{n+2,m,p-2}(\underset{m}{\underbrace{X+Y,\ldots,X+Y}},\underset{j}{\underbrace{X+Y,\ldots,X+Y}},\underset{p-2-j}{\underbrace{X,\ldots,X}})[Y_1,\ldots,Y_n,Y,Y],\\[1ex]
R_{2,j}&=(1+2p)E_{n+2,m+1,p}(\underset{m-j}{\underbrace{X+Y,\ldots,X+Y}},\underset{j+1}{\underbrace{X,\ldots,X}},\underset{p}{\underbrace{X,\ldots,X}})[Y_1,\ldots,Y_n,Y,Y]\\[1ex]
&\hspace{0,5cm}+pE_{n+3,m+1,p-1}(\underset{m-j}{\underbrace{X+Y,\ldots,X+Y}},\underset{j+1}{\underbrace{X,\ldots,X}},\underset{p-1}{\underbrace{X,\ldots,X}})[Y_1,\ldots,Y_n,Y,Y,Y],\\[1ex]
R_{3,j,l}&=2E_{n+2,m+2,p+2}(\underset{m-j-l}{\underbrace{X+Y,\ldots,X+Y}},\underset{j+l+2}{\underbrace{X,\ldots,X}},\underset{p+2}{\underbrace{X,\ldots,X}})[Y_1,\ldots,Y_n,Y,Y]\\[1ex]
&\hspace{0,5cm}+E_{n+3,m+2,p+1}(\underset{m-j-l}{\underbrace{X+Y,\ldots,X+Y}},\underset{j+l+2}{\underbrace{X,\ldots,X}},\underset{p+1}{\underbrace{X,\ldots,X}})[Y_1,\ldots,Y_n,Y,Y,Y].
\end{align*}
Hence, for all $Y$ sufficiently close to $X$ in $H^r(\R)^2$ it follows from Lemma~\ref{L:MP12b}, by arguing as in the derivation of \eqref{estMUL}, that 
\begin{align*}
\|R(X,Y)[Y_1,\ldots,Y_n]\|_{\kL(L_2(\R), H^1(\R))}\leq C\|Y\|^2_{H^r}\prod_{i=1}^n\|Y_i\|_{H^r},
\end{align*}
and the claim follows.
\end{proof}

We complete this section by establishing \eqref{RegP}.

\begin{cor}\label{Cor:1} It holds that $\Phi\in{\rm C}^{\infty}(\cO_r,H^{r-1}(\R)^2)$.
\end{cor}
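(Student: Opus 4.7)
The plan is to write $\Phi_1$ and $\Phi_2$ as finite sums of elementary building blocks and to check that each block depends smoothly on $X\in\cO_r$ as a map into $H^{r-1}(\R)$. Since $r\in(3/2,2)$, we have $r-1\in(1/2,1)$, so $H^r(\R)$ and $H^{r-1}(\R)$ are both Banach algebras, and pointwise multiplication $H^r(\R)\times H^{r-1}(\R)\to H^{r-1}(\R)$ is a continuous bilinear operation (hence smooth). Moreover the differentiation map $[u\mapsto u']:H^r(\R)\to H^{r-1}(\R)$ is continuous linear (so smooth), and we have the continuous embedding $H^1(\R)\hookrightarrow H^{r-1}(\R)$. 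These ingredients will be combined repeatedly with the smoothness results of Section~\ref{Sec:2}.

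For the diagonal terms $\bB(f)[f']$ and $\bB(h)[h']$, Lemma~\ref{L:MP0}(ii) gives that $[u\mapsto B^0_{0,1}(u)]$ and $[u\mapsto B^0_{1,1}(u)]$ belong to ${\rm C}^\infty(H^r(\R),\kL(H^{r-1}(\R)))$. Composing with $f\mapsto f'$ and evaluating on the same argument, and then (for the second summand of $\bB$) multiplying by $f'\in H^{r-1}(\R)$ via the multiplier property, shows that $[X\mapsto \bB(f)[f']]$ and $[X\mapsto \bB(h)[h']]$ lie in ${\rm C}^\infty(\cO_r,H^{r-1}(\R))$.

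For the off-diagonal terms, apply Lemma~\ref{L:MPsm} with $n=0$, $m=1$, $p=0$: the operator-valued maps $C_1,\,C_1',\,D_1,\,D_1'$ belong to ${\rm C}^\infty(\cO_r,\kL(L_2(\R),H^1(\R)))$. The inputs $h'$ and $hh'$ (respectively $f'$ and $ff'$) depend smoothly on $X\in\cO_r$ as elements of $H^{r-1}(\R)\subset L_2(\R)$ (for the product $hh'$ we use the algebra/multiplier property once more). Therefore the compositions $X\mapsto C_1(X)[h']$, $X\mapsto C_1(X)[hh']$, $X\mapsto D_1(X)[h']$, and their primed analogues, are smooth into $H^1(\R)\hookrightarrow H^{r-1}(\R)$. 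Multiplying by the smooth factors $f'\in H^{r-1}(\R)$ and $(c_\infty+f),(h-c_\infty)\in H^r(\R)+\R$ through the $H^r$-multiplier action on $H^{r-1}$, we conclude that every term in the expressions \eqref{Phi1} and \eqref{Phi2} defines a smooth map $\cO_r\to H^{r-1}(\R)$. Summing the contributions yields $\Phi_1,\Phi_2\in{\rm C}^\infty(\cO_r,H^{r-1}(\R))$, hence \eqref{RegP}. There is no genuine obstacle here beyond bookkeeping of function spaces; the nontrivial analytic work has already been carried out in Lemmas~\ref{L:MP0},~\ref{L:MP12b}, and~\ref{L:MPsm}.
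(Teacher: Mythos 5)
Your proposal is correct and follows essentially the same route as the paper's own proof: it cites Lemma~\ref{L:MP0}(ii) for the diagonal $\bB$-terms and Lemma~\ref{L:MPsm} for the $C_1,\,C_1',\,D_1,\,D_1'$-terms, and then closes with the algebra property of $H^{r-1}(\R)$ together with the embedding $H^1(\R)\hookrightarrow H^{r-1}(\R)$. You simply spell out the bookkeeping (compositions with the linear map $u\mapsto u'$, the bilinear evaluation and multiplication maps) that the paper leaves implicit.
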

\begin{proof}
The claim follows from \eqref{Phi1}-\eqref{Phi2}, Lemma~\ref{L:MP0}~(ii), and Lemma~\ref{L:MPsm}, by using the algebra property of $H^{r-1}(\R)$ and the embedding $H^1(\R)\hookrightarrow H^{r-1}(\R).$
\end{proof}

\section{The generator property and the proof of the main results}\label{Sec:3} 
In the first part of this section we show that the evolution problem \eqref{eq:F} is parabolic in $\cO_r$ by establishing the generator property \eqref{Gen}. In the second part we prove our main results.
With respect to the first task, let $X=(f,h)\in\cO_r$ be fixed. 
We can represent the Fr\'echet derivative~$\p\Phi(X)$ as the matrix operator
\begin{align*}
\p\Phi(X)
=\begin{pmatrix}
\p_f \Phi_1(X)&\p_h \Phi_1(X)\\[1ex]
\p_f \Phi_2(X)&\p_h \Phi_2(X)
\end{pmatrix}\in\kL(H^{r}(\R)^2,H^{r-1}(\R)^2).
\end{align*}
Though the coupling terms in \eqref{Phi1}-\eqref{Phi2} involve highest order derivatives of both unknowns, 
 Lemma~\ref{L:MPsm} and \eqref{estMUL} show that the  off-diagonal entry $\p_h \Phi_1(X)$  can be treated as being a perturbation.
Indeed, recalling Lemma~\ref{L:MPsm}, we obtain in particular for $E\in\{C,\, D\}$ that 
\begin{align}\label{F:F}
\p E_1(X)[Y]=-2E_{2,1}^1(X)[Y]=2E_{0,2,1}(X,X,X)[v\,\cdot]-2uE_{0,2,1}(X,X,X)
\end{align}
for all $Y=(u,v)\in H^r(\R)^2.$ Using this formula, it follows from~\eqref{Phi1} that
\begin{align*}
\p_h\Phi_1(X)[v]&=\frac{\Theta_2}{\pi}\Big((c_\infty+f)f'\big(C_1(X)[v']+2C_{0,2,1}(X,X,X)[vh']\big)\\[1ex]
&\hspace{1.25cm}-f'\big(C_1(X)[hv'+vh']+2C_{0,2,1}(X,X,X)[vhh']\big)\\[1ex]
&\hspace{1.25cm}+D_1(X)[v']+2D_{0,2,1}(X,X,X)[vh']\big)
\end{align*}
and \eqref{estMUL} yields 
\begin{align*}
\|\p_h\Phi_1(X)[v]\|_{H^{r-1}}\leq C\|v\|_{H^1}\qquad\text{for all $v\in H^r(\R).$}
\end{align*}
In view of \cite[Theorem I.1.6.1]{Am95} and of the property $\|v\|_{H^1}\leq \nu\|v\|_{H^r}+C(\nu)\|v\|_{H^{r-1}},$ $v\in~H^r(\R)$,  
which holds for any given arbitrary small $\nu>0$, we conclude that \eqref{Gen} is satisfied provided the diagonal entries are analytic generators, that is
\begin{align}\label{2Gen}
-\p_f\Phi_1(X)\in\mathcal{H}(H^{r}(\R),H^{r-1}(\R))\qquad\text{and}\qquad -\p_h\Phi_2(X)\in\mathcal{H}(H^{r}(\R),H^{r-1}(\R)).
\end{align} 

To establish the generator property for $\p_f\Phi_1(X)$ we follow the strategy from~\cite[Section~4]{AM21x} (see also \cite{E94, ES97} where similar arguments are used in other contexts).
To this end we first deduce from Lemma~\ref{L:MP0}~(ii) that the mapping
\[
[u\mapsto \bB(u)]:H^{r}(\R)\to \kL(H^{r-1}(\R))
\]  
is smooth. 
In view of this property, of \eqref{F:F}, and of \eqref{estMUL}  we infer from \eqref{Phi1} that
\begin{align*}
\p_f \Phi_1(X)[u]&=\Theta_1\big(\bB(f)[u']+\p\bB(f)[u][f']\big)+a(X)u'+T_{\rm lot}[u],
\end{align*}
where, according to Lemma~\ref{L:MP12b}, 
\begin{align*} 
a(X)\coloneqq{}\frac{\Theta_2}{\pi}\big((c_\infty+f)C_1(X)[h']-C_1(X)[hh']\big)\in H^1(\R).
\end{align*}
Moreover, $T_{\rm lot}$ is a lower order operator, more precisely
\begin{align}\label{L:LOT}
\|T_{\rm lot}[u]\|_{H^{r-1}}\leq C\|u\|_{H^1} \qquad\text{for all $u\in H^r(\R)$}.
\end{align}
We next consider the continuous path $[\tau\mapsto \Psi(\tau)]:[0,1]\to\kL(H^{r}(\R),H^{r-1}(\R))$ with
\begin{align*}
\Psi(\tau)[u]&\coloneqq{}\Theta_1\big(\bB(\tau f)[u']+\tau\p\bB(f)[u][f']\big)+\tau a(X)u'+\tau T_{\rm lot}[u].
\end{align*}
Observe that $\Psi(1)= \p_f \Phi_1(X)$ and $\Psi(0)$ is the Fourier multiplier
\[
\Psi(0)=\Theta_1\bB(0)\circ\frac{d}{dx}=\Theta_1 H\circ\frac{d}{dx}=\Theta_1\Big(-\frac{d^2}{dx^2}\Big)^{1/2}
\]
with symbol $[\xi\mapsto \Theta_1|\xi|].$
The  next step is to approximate $\Psi(\tau)$ locally by certain Fourier multipliers, see Lemma~\ref{L:Ap} below.
Therefore, we  choose  for each $\e\in(0,1)$, a finite ${\e}$-localization family, that is  a family  
\[\{\pi_j^\e\,:\, -N+1\leq j\leq N\}\subset  {\rm C}^\infty(\R,[0,1]),\]
with $N=N(\e)\in\N $ sufficiently large, such that
\begin{align*}
\bullet\,\,\,\, \,\,  & \text{$ \supp \pi_j^\e $ is an interval of length $\e$ for all $|j|\leq N-1$, $\supp \pi_{N}^\e\subset(-\infty,-1/\e]\cup [1/\e,\infty)$;} \\[1ex]
\bullet\,\,\,\, \,\, &\text{ $ \pi_j^\e\cdot  \pi_l^\e=0$ if $[|j-l|\geq2, \max\{|j|, |l|\}\leq N-1]$ or $[|l|\leq N-2, j=N];$} \\[1ex]
\bullet\,\,\,\, \,\, &\text{ $\sum_{j=-N+1}^N(\pi_j^\e)^2=1;$} \\[1ex]
 \bullet\,\,\,\, \,\, &\text{$\|(\pi_j^\e)^{(k)}\|_\infty\leq C\e^{-k}$ for all $ k\in\N, -N+1\leq j\leq N$.} 
\end{align*} 
To each finite $\e$-localization family we associate  a second family   
$$\{\chi_j^\e\,:\, -N+1\leq j\leq N\}\subset {\rm C}^\infty(\R,[0,1])$$ such that
\begin{align*}
\bullet\,\,\,\, \,\,  &\text{$\chi_j^\e=1$ on $\supp \pi_j^\e$, $\supp\chi_N^\e\subset [|x|\geq 1/\e-\e]$;} \\[1ex]
\bullet\,\,\,\, \,\,  &\text{$\supp \chi_j^\e$ is an interval  of length $3\e$ and with the same midpoint as $ \supp \pi_j^\e$, $|j|\leq N-1$.} 
\end{align*}

\begin{lemma}\label{L:Ap}
Let $X\in\cO_r$ be fixed and chose $r'\in(3/2,r)$.
Given   $\nu>0$,  there exist $\e\in(0,1)$, a finite $\e$-locali\-za\-tion family  $\{\pi_j^\e\,:\, -N+1\leq j\leq N\} $,  a positive constant $K=K(\e,X)$, 
and   bounded operators~$\bA_{j,\tau}\in\kL(H^r(\R), H^{r-1}(\R)), $ $j\in\{-N+1,\ldots,N\}$ and $\tau\in[0,1]$, such that 
 \begin{equation}\label{D1}
  \|\pi_j^\e \Psi(\tau) [u]-\bA_{j,\tau}[\pi^\e_j u]\|_{H^{r-1}}\leq \nu \|\pi_j^\e u\|_{H^r}+K\|  u\|_{H^{r'}}
 \end{equation}
 for all $ -N+1\leq j\leq N $, $\tau\in[0,1],$  and  $u\in H^r(\R)$. 
 The operators $\bA_{j,\tau}$ are defined  by 
  \begin{align*} 
 \bA_{j,\tau }\coloneqq{} \alpha_\tau(x_j^\e)\Big(-\frac{d^2}{dx^2}\Big)^{1/2}+\beta_\tau (x_j^\e)\frac{d}{dx}, \quad |j|\leq N-1, \qquad\bA_{N,\tau }\coloneqq{}    \Theta_1 \Big(-\frac{d^2}{dx^2}\Big)^{1/2},
 \end{align*}
 where  $x_j^\e\in \supp  \pi_j^\e,$ $|j|\leq N-1,$ and  with functions $\alpha_\tau,\, \beta_\tau$ given by
 \begin{align*}
 \alpha_\tau\coloneqq{}\frac{1+(1-\tau) f'^2}{1+f'^2}\Theta_1, \qquad  \beta_\tau\coloneqq{} \frac{\tau \Theta_1}{\pi}B_{1,1}^0(f)[f']+\tau a(X).   
 \end{align*} 
\end{lemma}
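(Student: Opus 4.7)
\medskip
\noindent\textbf{Proof plan.} The approach is a freezing-of-coefficients localization in the spirit of \cite{AM21x, E94, ES97}, with the homotopy parameter~$\tau$ treated as an extra label. First expand, for $X=(f,h)\in\cO_r$ and $u\in H^r(\R)$,
\[
\Psi(\tau)[u]=\tfrac{\Theta_1}{\pi}\bigl(B_{0,1}^0(\tau f)[u']+\tau f'B_{1,1}^0(\tau f)[u']\bigr)+\tfrac{\tau\Theta_1}{\pi}\bigl(\p B_{0,1}^0(f)[u][f']+u'B_{1,1}^0(f)[f']+f'\p B_{1,1}^0(f)[u][f']\bigr)+\tau a(X)u'+\tau T_{\rm lot}[u].
\]
The last summand is absorbed into $K\|u\|_{H^{r'}}$ by \eqref{L:LOT} and the embedding $H^{r'}\hookrightarrow H^1$. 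The remaining pieces split into singular integrals applied to $u'$, which after freezing will produce the second-order symbol $\alpha_\tau(x_j^\e)(-\p^2)^{1/2}$, and the explicit multiplications $u'\cdot B_{1,1}^0(f)[f']$ and $a(X)u'$, which produce the first-order symbol $\beta_\tau(x_j^\e)\,d/dx$.

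For each $|j|\leq N-1$ and every contribution of the form $\pi_j^\e B_{n,m}^0(\tau f)[v]$, decompose $v=\chi_j^\e v+(1-\chi_j^\e)v$. Since $\supp\pi_j^\e$ and $\supp(1-\chi_j^\e)$ are separated by at least~$\e$, the off-diagonal piece reduces to a smoothing convolution whose $H^{r-1}$ norm is bounded by $K(\e)\|v\|_{H^{r-1}}$ (in the spirit of the tail estimate in the proof of Lemma~\ref{L:MP2a}); one absorbs it into $K\|u\|_{H^{r'}}$ by interpolating $H^{r-1}$ between $H^{r'-1}$ and $H^r$. On $\supp\chi_j^\e$, write $\tau f=\tau f_j^{\rm aff}+\tau\rho_j$ with $f_j^{\rm aff}(x)\coloneqq f(x_j^\e)+f'(x_j^\e)(x-x_j^\e)$. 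The embedding $H^r(\R)\hookrightarrow {\rm BUC}^{r-1/2}(\R)$ yields $\|\rho_j'\|_{L_\infty(\supp\chi_j^\e)}\leq C\e^{r-3/2}\to 0$ as $\e\to 0$, and Lemma~\ref{L:MP0}(ii) then provides
\[
\|B_{n,m}^0(\tau f)-B_{n,m}^0(\tau f_j^{\rm aff})\|_{\kL(H^{r-1})}\xrightarrow[\e\to 0]{}0\qquad\text{uniformly in $\tau\in[0,1]$,}
\]
which feeds the $\nu\|\pi_j^\e u\|_{H^r}$ term after one further interpolation.

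Once $f$ has been replaced by $f_j^{\rm aff}$, every $B_{n,m}^0(\tau f_j^{\rm aff})$ is an explicit Fourier multiplier. Setting $m\coloneqq f'(x_j^\e)$ and using the identity $\PV\int(u(x)-u(x-s))/s^2\,ds=\pi H[u'](x)=\pi(-\p^2)^{1/2}[u](x)$, one computes
\[
B_{0,1}^0(\tau f_j^{\rm aff})[\omega]=\tfrac{\pi}{1+\tau^2m^2}H[\omega],\qquad B_{1,1}^0(\tau f_j^{\rm aff})[\omega]=\tfrac{\tau m\pi}{1+\tau^2m^2}H[\omega],
\]
and derives analogous explicit formulas for $\p B_{0,1}^0(f_j^{\rm aff})[u][m]=-\tfrac{2m^2\pi}{(1+m^2)^2}H[u']$ and $m\,\p B_{1,1}^0(f_j^{\rm aff})[u][m]=\tfrac{m^2(1-m^2)\pi}{(1+m^2)^2}H[u']$. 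Summing the four second-order contributions and simplifying collapses them to $\Theta_1\bigl(1-\tau m^2/(1+m^2)\bigr)H[u']=\alpha_\tau(x_j^\e)(-\p^2)^{1/2}[u]$. In parallel, $B_{1,1}^0(f)[f']\in H^{r-1}(\R)$ and $a(X)\in H^1(\R)$ both embed into ${\rm BUC}(\R)$ (since $r-1>1/2$); freezing them at $x_j^\e$ gives $\beta_\tau(x_j^\e)u'$ up to oscillation errors controlled by the modulus of continuity on the $\e$-window $\supp\pi_j^\e$. Exchanging $\pi_j^\e$ with the frozen Fourier multipliers only costs a zeroth-order commutator, hence a lower-order term. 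For $j=N$, since $\supp\pi_N^\e\subset\{|x|\geq1/\e\}$ and $f'(x)\to 0$ as $|x|\to\infty$, both $m$ and $\beta_\tau$ vanish to leading order there, recovering $\bA_{N,\tau}=\Theta_1(-\p^2)^{1/2}$ as the natural limit symbol.

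\noindent\textbf{Main obstacle.} The core technical point is the uniform (in $\tau$) freezing estimate $\|B_{n,m}^0(\tau f)-B_{n,m}^0(\tau f_j^{\rm aff})\|_{\kL(H^{r-1})}\to 0$ and its conversion into the required $\nu\|\pi_j^\e u\|_{H^r}+K\|u\|_{H^{r'}}$ split, which demands a careful three-way interpolation between $H^r$, $H^{r-1}$, and $H^{r'}$. The algebraic identity that collapses the four second-order contributions into $\alpha_\tau(x_j^\e)(-\p^2)^{1/2}$ is elementary but requires careful sign bookkeeping across the three components of $\p\bB$; the commutator estimate for $\pi_j^\e$ against $(-\p^2)^{1/2}$ is standard.
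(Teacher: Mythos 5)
Your overall architecture (cut-off with $\chi_j^\e$, affine freezing of $f$ at $x_j^\e$, explicit Fourier-multiplier computation, freezing of the first-order coefficients $B_{1,1}^0(f)[f']$ and $a(X)$, decay at infinity for $j=N$) is the standard strategy, and your symbol algebra is correct: the four second-order contributions do collapse to $\alpha_\tau(x_j^\e)\big(-\tfrac{d^2}{dx^2}\big)^{1/2}$ and the first-order ones to $\beta_\tau(x_j^\e)\tfrac{d}{dx}$. Note, however, that the paper does not redo this analysis: it quotes the localization estimate for the $\bB$-part directly from \cite[Theorem~4.3]{AM21x} and only treats the extra term $\tau a(X)u'$ by hand (via $\|ab\|_{H^{r-1}}\lesssim\|a\|_{H^{r-1}}\|b\|_\infty+\|a\|_\infty\|b\|_{H^{r-1}}$, the H\"older continuity of $a(X)$ and its decay at infinity), plus \eqref{L:LOT} for $T_{\rm lot}$. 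You are attempting to reprove the cited result, and it is exactly at its technical core that your sketch has genuine gaps.

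Two steps fail as written. First, the freezing estimate: the claimed convergence $\|B_{n,m}^0(\tau f)-B_{n,m}^0(\tau f_j^{\rm aff})\|_{\kL(H^{r-1})}\to0$ cannot be obtained from Lemma~\ref{L:MP0}~(ii), because $f_j^{\rm aff}$ is affine and hence not in $H^r(\R)$, and the statement is in fact false as a global operator-norm assertion: away from $\supp\chi_j^\e$ the slopes of $f$ and $f_j^{\rm aff}$ differ by an amount of order one, so the difference of the operators is not small. What is true, and what must be proved, is smallness of the localized composition $\pi_j^\e\big(B_{n,m}^0(\tau f)-B_{n,m}^0(\tau f_j^{\rm aff})\big)[\chi_j^\e\,\cdot\,]$ in $\kL(H^{r-1})$, using that $|\delta_{[x,s]}f/s-f'(x_j^\e)|\lesssim\e^{r-3/2}$ when $x\in\supp\pi_j^\e$ and $x-s\in\supp\chi_j^\e$; establishing this in the $H^{r-1}$ operator norm (not just in $L_2$) requires the kernel/commutator estimates that constitute the proof of \cite[Theorem~4.3]{AM21x}, which you essentially assume. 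Second, the separated-support piece: bounding $\pi_j^\e B_{n,m}^0(\tau f)[(1-\chi_j^\e)u']$ in $H^{r-1}$ by $K(\e)\|v\|_{H^{r-1}}$ with $v\sim u'$ is a \emph{top-order} bound ($\|u'\|_{H^{r-1}}\simeq\|u\|_{H^r}$), and no interpolation of $H^{r-1}$ between $H^{r'-1}$ and $H^r$ can convert it into $K\|u\|_{H^{r'}}$, nor into $\nu\|\pi_j^\e u\|_{H^r}$ (a localized norm cannot absorb a global one). The correct statement is that, thanks to the $\e$-separation of the supports, this piece is a nonsingular integral operator which loses no derivatives — e.g. after integrating by parts in $s$ it is controlled by $K(\e)\|u\|_{H^1}\leq K\|u\|_{H^{r'}}$ — and this again needs a (standard but nontrivial) kernel estimate, not interpolation. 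Finally, a smaller point: since you cut the input by $\chi_j^\e$ before freezing, your top-order error is naturally measured by $\|\chi_j^\e u\|_{H^r}$ rather than the $\|\pi_j^\e u\|_{H^r}$ demanded by \eqref{D1}; to get the stated form one must first commute $\pi_j^\e$ past the variable-coefficient operator (as is done for $a(X)u'$ in the paper, where the oscillation $\chi_j^\e(a(X)-a(X)(x_j^\e))$ multiplies $(\pi_j^\e u)'$), which your sketch only gestures at.
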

\begin{proof}
As shown in the proof of \cite[Theorem 4.3]{AM21x}  (in a more general context), if $\e$ is chosen sufficiently small, then for all  $\tau\in[0,1] $  and  $u\in H^r(\R) $ we have
 \begin{align*}
  & \Big\|\pi_j^\e \big(\bB(\tau f)[u']+\tau\p\bB(f)[u][f']\big)-\frac{\alpha_\tau(x_j^\e)}{\Theta_1} \Big(-\frac{d^2}{dx^2}\Big)^{1/2}[\pi^\e_j u]-\frac{\tau}{\pi}B_{1,1}^0(f)[f'](x_j^\e)(\pi_j^\e u)'\Big\|_{H^{r-1}}\\[1ex]
  &\hspace{3cm}\leq \frac{\nu}{2|\Theta_1|} \|\pi_j^\e u\|_{H^r}+K\|  u\|_{H^{r'}},\qquad |j|\leq N-1,
 \end{align*}
 and 
  \begin{align*}
\Big\|\pi_N^\e \big(\bB(\tau f)[u']+\tau\p\bB(f)[u][f']\big)- \Big(-\frac{d^2}{dx^2}\Big)^{1/2}[\pi^\e_N u]\Big\|_{H^{r-1}}
  \leq \frac{\nu}{2|\Theta_1|} \|\pi_j^\e u\|_{H^r}+K\|  u\|_{H^{r'}}.
 \end{align*}

We next recall, see e.g.  \cite[Eq. 2.1]{AM21x},  there exists a constant $C>0$ such that 
\begin{align*}
\|ab\|_{H^{r-1}}\leq C(\|a\|_{H^{r-1}}\|b\|_\infty+\|a\|_\infty\|b\|_{H^{r-1}}) \qquad\text{for all $a,$ $ b\in H^{r-1}(\R)$.}
\end{align*} 
Using this estimate together with the identity $\chi_j^\e\pi_j^\e=\pi_j^\e$, $-N+1\leq j\leq N$, we get in view of the relation $a(X)\in {\rm C}^{1/2}(\R)$   that 
\begin{align*}
\|\pi_j^\e a(X)u'-a(X)(x_j^\e)(\pi^\e_j u)'\|_{H^{r-1}}&\leq\|\chi_j^\e \big(a(X)-a(X)(x_j^\e)\big)(\pi^\e_j u)'\|_{H^{r-1}}+K\|u\|_{H^{r-1}}\\[1ex]
&\leq C\|\chi_j^\e \big(a(X)-a(X)(x_j^\e)\big)\|_\infty\|\pi^\e_j u\|_{H^{r}}+K\|u\|_{H^{r'}}\\[1ex]
&\leq  \frac{\nu}{4} \|\pi_j^\e u\|_{H^r}+K\|  u\|_{H^{r'}},\qquad |j|\leq N-1,
\end{align*}
if $\e$ is sufficiently small,
respectively, in view of the fact that $a(X)$ vanishes at infinity,
\begin{align*}
\|\pi_N^\e a(X)u' \|_{H^{r-1}}&\leq\|\chi_N^\e a(X)(\pi_N^\e u)'\|_{H^{r-1}}+K\|u\|_{H^{r-1}}\\[1ex]
&\leq C\|\chi_N^\e a(X)\|_{H^{r-1}}\|\pi^\e_N u\|_{H^{r}}+K\|u\|_{H^{r'}}\\[1ex]
&\leq  \frac{\nu}{4} \|\pi_N^\e u\|_{H^r}+K\|  u\|_{H^{r'}}
\end{align*}
for all  $u\in H^r(\R).$
These estimates together with \eqref{L:LOT} lead us to \eqref{D1}.
\end{proof}

Let us observe there exists $\eta\in(0,1)$ such that the symbols of the Fourier multipliers identified in Lemma~\ref{L:Ap} satisfy
\begin{align*}
\eta\leq -\alpha_\tau\leq \frac{1}{\eta}\quad\text{and}\quad \|\beta_\tau||_\infty\leq \frac{1}{\eta}\qquad\text{for all $\tau\in[0,1]$.}
\end{align*}
Classical Fourier analysis arguments then show there exists $\kappa_0=\kappa_0(\eta)\geq 1$ such that 
\begin{align}
\bullet &\quad \mbox{$\lambda-\bA_{\alpha,\beta}\in \kL(H^r(\R),H^{r-1}(\R))$ is an isomoprphism for all $ \re\lambda\geq 1,$}\label{L:FM1}\\[1ex]
\bullet &\quad  \kappa_0\|(\lambda-\bA_{\alpha,\beta})[u]\|_{H^{r-1}}\geq |\lambda|\cdot\|u\|_{H^{r-1}}+\|u\|_{H^r}, \qquad \forall\, u\in H^r(\R),\, \re\lambda\geq 1\label{L:FM2},
\end{align}
uniformly for $\bA_{\alpha,\beta}\coloneqq{} \alpha(-d^2/dx^2)^{1/2}+\beta (d/dx)$, where $-\alpha\in[\eta,1/\eta],$ $|\beta|\leq 1/\eta$.
The properties~\eqref{L:FM1}-\eqref{L:FM2} combined with Lemma~\ref{L:Ap} enable us to obtain the desired generator property for~${\p_f\Phi_1(X)}$.

\begin{prop}\label{P:pfo1}
Given $X\in\cO_r$, it holds that $ -\p\Phi(X)\in\mathcal{H}(H^{r}(\R)^2,H^{r-1}(\R)^2).$
\end{prop}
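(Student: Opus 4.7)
The discussion preceding the statement already reduces matters to \eqref{2Gen}: once the diagonal entries $-\partial_f\Phi_1(X)$ and $-\partial_h\Phi_2(X)$ generate analytic semigroups, the off-diagonal estimate $\|\partial_h\Phi_1(X)[v]\|_{H^{r-1}} \leq C\|v\|_{H^1}$ (and its symmetric counterpart for $\partial_f\Phi_2(X)$) together with the interpolation $\|v\|_{H^1} \leq \nu\|v\|_{H^r} + C(\nu)\|v\|_{H^{r-1}}$ and \cite[Theorem I.1.6.1]{Am95} promote this to the full matrix operator. Moreover, the roles of $f$ and $h$ are entirely symmetric in \eqref{Phi1}--\eqref{Phi2} after interchanging $(f,c_\infty,\Theta_1)\leftrightarrow(h,-c_\infty,\Theta_2)$, so it suffices to treat $-\partial_f\Phi_1(X)$.

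For this, my plan is the continuation method along the path $\{\Psi(\tau)\}_{\tau\in[0,1]}$ introduced just before Lemma~\ref{L:Ap}, whose endpoints are $\Psi(1)=\partial_f\Phi_1(X)$ and the Fourier multiplier $\Psi(0)=\Theta_1(-d^2/dx^2)^{1/2}$. The goal is to prove that there exist constants $\kappa\geq 1$ and $\omega\geq 1$ such that, uniformly in $\tau\in[0,1]$,
\begin{equation*}
\kappa\|(\lambda-\Psi(\tau))[u]\|_{H^{r-1}} \geq |\lambda|\,\|u\|_{H^{r-1}} + \|u\|_{H^r}, \qquad \re\lambda\geq\omega,\ u\in H^r(\R).
\end{equation*}
To establish this, I would first verify that the coefficients of $\bA_{j,\tau}$ satisfy the uniform bounds needed in \eqref{L:FM1}--\eqref{L:FM2}: writing $\alpha_\tau=|\Theta_1|\,(1+(1-\tau)f'^2)/(1+f'^2)$ (note $\Theta_1<0$), one has $-\alpha_\tau\in[\eta,1/\eta]$ with $\eta:=|\Theta_1|/(1+\|f'\|_\infty^2)\wedge 1$, and $\|\beta_\tau\|_\infty\leq \eta^{-1}$ follows from Lemma~\ref{L:MP0}~(i) and $a(X)\in H^1(\R)$. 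Then for each localization index $j$ I would apply Lemma~\ref{L:Ap} with $\nu=1/(2\kappa_0)$ and write
\begin{equation*}
(\lambda-\bA_{j,\tau})[\pi_j^\e u] = \pi_j^\e(\lambda-\Psi(\tau))[u] + \bigl(\bA_{j,\tau}[\pi_j^\e u]-\pi_j^\e\Psi(\tau)[u]\bigr),
\end{equation*}
estimate the left-hand side from below by $\kappa_0^{-1}(|\lambda|\,\|\pi_j^\e u\|_{H^{r-1}}+\|\pi_j^\e u\|_{H^r})$ via \eqref{L:FM2}, and absorb the commutator error into $\tfrac{1}{2\kappa_0}\|\pi_j^\e u\|_{H^r}+K\|u\|_{H^{r'}}$ using \eqref{D1}. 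Squaring, summing over $j$ using $\sum_j(\pi_j^\e)^2=1$ (together with the standard equivalence $\|u\|_{H^s}^2\simeq \sum_j\|\pi_j^\e u\|_{H^s}^2$ up to lower-order errors, valid for $s\in\{r-1,r\}$), and finally taking $\omega$ large enough to absorb the remaining $K\|u\|_{H^{r'}}$ term via $\|u\|_{H^{r'}}\leq \delta\|u\|_{H^r}+C(\delta)\|u\|_{H^{r-1}}$ yields the desired uniform estimate.

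Once this estimate is in place, injectivity of $\lambda-\Psi(\tau)$ and closedness of its range are immediate. Since $\lambda-\Psi(0)=\lambda-\Theta_1(-d^2/dx^2)^{1/2}$ is an isomorphism in $\kL(H^r(\R),H^{r-1}(\R))$ for $\re\lambda\geq\omega$ (the symbol $\lambda+|\Theta_1||\xi|$ never vanishes), the method of continuity propagates surjectivity from $\tau=0$ to $\tau=1$. The resulting resolvent estimate for $\lambda-\partial_f\Phi_1(X)$ is precisely the characterization of $-\partial_f\Phi_1(X)\in\kH(H^r(\R),H^{r-1}(\R))$ from \cite[I.1.2.2]{Am95}. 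The symmetric argument gives the second half of \eqref{2Gen}, and the reduction at the start completes the proof. The main delicate point I expect is step (iv) above: threading the parameter choices $(\e,\nu,\omega)$ consistently so that the localization errors from Lemma~\ref{L:Ap}, the interactions between overlapping $\pi_j^\e$'s in the summation, and the $H^{r'}$-type lower-order terms are all absorbed in a manner uniform in $\tau\in[0,1]$ and in the resolvent parameter $\lambda$.
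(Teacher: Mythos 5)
Your proposal follows essentially the same route as the paper: reduction to the diagonal entries via the lower-order off-diagonal estimate and \cite[Theorem I.1.6.1]{Am95}, the homotopy $\Psi(\tau)$ from $\p_f\Phi_1(X)$ to the Fourier multiplier $\Theta_1(-d^2/dx^2)^{1/2}$, the localization approximation of Lemma~\ref{L:Ap} combined with \eqref{L:FM1}--\eqref{L:FM2} to obtain the uniform resolvent estimate \eqref{KDED}, and the method of continuity; the paper merely outsources the patching/summation step you spell out to the argument of \cite[Theorem~4.1]{AM21x}. The proposal is correct (up to a harmless bookkeeping slip in the explicit choice of $\eta$, which must also dominate $1/|\Theta_1|$ and the bound on $\beta_\tau$), so no further comparison is needed.
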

\begin{proof}
According to our discussion above it remains to establish \eqref{2Gen}. To prove  the generator property for~$\p_f\Phi_1(X)$,
we may argue as in \cite[Theorem~4.1]{AM21x} to  find, in view of \eqref{L:FM1}-\eqref{L:FM2} and of Lemma~\ref{L:Ap}, constants  $\kappa=\kappa(X)\geq1$  and $\omega=\omega(X)>0 $ such that 
  \begin{align}\label{KDED}
   \kappa\|(\lambda-\Psi(\tau  ))[u]\|_{H^{r-1}}\geq |\lambda|\cdot\|u\|_{H^{r-1}}+ \|u\|_{H^{r}}
 \end{align}
for all   $\tau\in[0,1],$   $\re \lambda\geq \omega$, and  $u\in H^{r}(\R)$.
Choosing $\omega\geq 1$, it follows from  \eqref{L:FM1}, in view of~$\Psi(0)=\bA_{\Theta_1,0}$, that $\omega-\Psi(0)\in \kL(H^{r}(\R), H^{r-1}(\R))$ is an isomorphism.
The method of continuity, cf. \cite[Proposition I.1.1.1]{Am95}, and \eqref{KDED} then imply  that~${\omega-\p_f\Phi_1(X)\in \kL(H^{r}(\R), H^{r-1}(\R))}$ is an isomorphism too. 
From this property and \eqref{KDED} (with $\tau=1$) we deduce that indeed~$-\p_f\Phi_1(X)$ belongs to~$\mathcal{H}(H^{r}(\R),H^{r-1}(\R))$.  
Since the generator property for $\p_h\Phi_2(X)$ follows by using similar arguments (which we therefore omit), this proves our claim.  
\end{proof}

We are now in a position to prove Theorem~\ref{MT1}.
\begin{proof}[Proof of Theorem~\ref{MT1}]
The properties \eqref{RegP} and \eqref{Gen} enable us to use the abstract parabolic theory from  \cite[Chapter 8]{L95} 
in the context of the evolution problem \eqref{eq:F}.
More precisely, given~$X_0\in \cO$,   \cite[Theorem~8.1.1]{L95} implies there exists a time $T>0$ and a solution~${X=X(\cdot;X_0)}$ to~\eqref{eq:F} 
such that\footnote{Given $\alpha\in(0,1)$, $T>0$,  and a Banach space $X$,   let $B((0,T], X)$   denote the Banach space of all bounded functions from $(0,T]$  into $X$. 
The Banach space ${\rm C}^\alpha_\alpha((0,T], X)$ is then defined as
 \[
 {\rm C}^\alpha_\alpha((0,T], X)\coloneqq{}\Big\{f\in B((0,T], X)\,:\, \|f\|_{C^\alpha_\alpha}\coloneqq{}\|f\|_\infty+\sup_{s\neq t}\frac{\|t^\alpha f(t)-s^\alpha f(s)\|_X}{|t-s|^\alpha}<\infty\Big\}.
 \]
 }
\[ X\in {\rm C}([0,T],\cO_r)\cap {\rm C}^1([0,T], H^{r-1}(\R)^2)\cap  {\rm C}^{\alpha}_{\alpha}((0,T], H^r(\R)^2)\] 
for some  $\alpha\in(0,1)$ (actually, since \eqref{eq:F} is autonomous, for all $\alpha\in(0,1)$).
Moreover, the solution is unique within the class 
\[
  \bigcup_{\alpha\in(0,1)}{\rm C}^{\alpha}_{\alpha}((0,T], H^r(\R)^2) \cap {\rm C}([0,T],\cO_r)\cap {\rm C}^1([0,T], H^{r-1}(\R)^2).
 \]
To prove that  the solution is unique  in ${\rm C}([0,T],\cO_r)\cap {\rm C}^1([0,T], H^{r-1}(\R)^2),$  
 we assume by contradiction  there exist two solutions $X_i:[0,T]\to\cO_r$, $i=1,\, 2$, to~\eqref{eq:F} such that~$X_1(0)=X_2(0)$ and~$X_1(t)\neq  X_2(t)$ for all $t\in(0,T]$. 
 Let $r'\in (3/2,r)$ be arbitrary  and set~${\alpha\coloneqq{} r-r'\in(0,1)}$. 
 The mean value theorem together with the inequality $ \|a\|_{H^{r'}}\leq \|a\|_{H^{r-1}}^\alpha \|a\|_{H^{r}}^{1-\alpha},$ $a\in H^r(\R),$
imply there exists a constant $C>0$ such that
 \begin{equation}\label{AdReg}
\|X_i(t)-X_i(s)\|_{H^{r'}}\leq C|t_1-t_2|^\alpha,\qquad s,\, t\in[0, T], \, i=1,\, 2.
 \end{equation}
Hence $X_i\in {\rm C}^{\alpha}([0,T],H^{r'}(\R)^2))\hookrightarrow {\rm C}^{\alpha}_{\alpha}((0,T], H^{r'}(\R)^2)$, and 
 \cite[Theorem 8.1.1]{L95} applied in the context of  \eqref{eq:F} with $r$ replaced by $r'$
ensures that $X_1=X_2$ in $[0,T],$ which contradicts our assumption.
This  unique local solution can be extended up to a maximal existence time~${T^+=T^+(X_0)}$, see \cite[Section 8.2]{L95}.\medskip

  The continuous dependence of the solution on the initial data stated at (i) follows from~\cite[Proposition~8.2.3]{L95}. \medskip
  
The proof of (ii)  uses a parameter trick which was successfully applied also to other problems, cf., e.g., \cite{An90, ES96, PSS15, MBV19, AM21x}.
Since the details are very similar to those in \cite[Theorem~1.2~(ii)]{AM21x}  we omit them.\medskip

To prove (iii) we assume  there exists a maximal solution   
$X=X(\cdot;X_0)$ to \eqref{eq:F} with~${T^+\!<\infty}$ and such that  
\[
 \sup_{t\in [0,T^+)}\|X(t)\|_{H^r}<\infty\qquad\text{and}\qquad \liminf_{t\to T_+}{\rm dist\,}(\G_f^{c_\infty}(t),\G_h(t))= c_0>0. 
\]
Arguing as above, we deduce for some fixed $r'\in(3/2,r)$,   that  $X:[0,T^+)\to \cO_{r'}$ is H\"older  continuous. 
Applying \cite[Theorem 8.1.1]{L95} to~\eqref{eq:F} (with $r$ replaced by $r'$)  we may extend the solution~$X$   to  an  interval~$[0,T')$ with $T^+<T'$  and such that 
 $X\in {\rm C}([0,T'),\cO_{r'})\cap {\rm C}^1([0,T'), H^{r'-1}(\mathbb{R})^2).$
Moreover,  the parabolic smoothing property established at (ii) (with $r$ replaced by $r'$)  implies  that~$X\in {\rm C}^1((0,T'), H^{r}(\mathbb{R})^2)$, and this contradicts  the maximality property
 of $X$. This completes  the proof.
\end{proof}

We conclude this section with the proof of Proposition~\ref{MP1}.
\begin{proof}[Proof of Proposition~\ref{MP1}]
Since $\|X(t)\|_{H^r}\leq M$ for all $t\in[0,T^+)$, \eqref{eq:S4} and Lemma~\ref{L:Bdd}  imply there exists $C>0$ such that
\[
\Big\|\frac{dX(t)}{dt}\Big\|_\infty\leq C(1+M^4),\qquad t\in[0,T^+).
\] 
Therefore  $X^2:=(f^2,h^2)\in{\rm C}^1([0,T^+), L_2(\R)^2)$ has a bounded derivative.
Moreover, we also have that~${X^2:[0,T)\to H^r(\R)^2}$ is  bounded. 
Since ${\|a\|_{H^{r'}}\leq \|a\|_{2}^{1-r'/r} \|a\|_{H^{r}}^{r'/r},}$~${a\in H^r(\R),}$ the mean value theorem yields
 $X^2\in {\rm BUC}^{1-r'/r}([0,T^+), \cO_{r'})$, where $r'\in(3/2,r)$ is  fixed. 
 Hence, there exists $X_*\in H^{r'}(\R)^2$ such that $X^2(t)\to X_*=(f_*,h_*)$ in $H^{r'}(\R)^2$ for $t\to T^+$.
 
 Since  
\[
\liminf_{t\to T^+}{\rm dist\,}(\G_f^{c_\infty}(t),\G_h(t))=0,
\]
cf.  Theorem~\ref{MT1}~(iii), there exists sequences $t_n\nearrow T^+$ and $(x_n)\subset\R$ with 
\begin{align}\label{convo}
(c_\infty+f)(t_n,x_n)-h(t_n,x_n)\to0\qquad\mbox{for $n\to\infty$}.
\end{align}
We next show that $(x_n)$ is bounded. 
To this end we infer from the convergence  ${X^2(t)\to X_*}$ in~${H^{r'}(\R)^2}$  there exists $n_0\in\N$ such that 
$|f(t_n,x)|+|h(t_n,x)|<c_\infty/2$ for all $n\geq n_0$ and~${|x|\geq n_0}$.
The latter inequality together with \eqref{convo} implies that $(x_n)$ is indeed bounded.

We may thus assume (after eventually subtracting a subsequence), that $x_n\to x_0$ in~$\R$. 
Since~$X_*$ is continuous and $X^2(t_n)\to X_*$ in $H^{r'}(\R)^2$ we get~${X(t_n,x_n)\to(\sqrt{f_*}(x_0), \sqrt{h_*}(x_0))}$.
 The relation \eqref{convo} now yields~${c_\infty+\sqrt{f_*}(x_0)=\sqrt{h_*}(x_0)}.$ 
 Finally, since  $X^2(t_n,x_0)\to X_*(x_0)$, together with the latter identity we conclude that~${c_\infty+f(t_n,x_0)-h(t_n,x_0)\to0,}$ and  therefore
 $$\liminf_{t\to T^+}\,(c_\infty+f(t,x_0)-h(t,x_0))=0.$$
 
 In order to prove the second claim we argue by contradiction and assume there exists~$x_0\in\R$ and~$\delta>0$ such that 
 \[
 \liminf_{t\to T^+}\sup_{\{|x-x_0|\leq \delta\}}(c_\infty+f(t,x)-h(t,x))=0. 
\]
Since  \eqref{eq:F} is invariant under horizontal translations we may  assume without loss of generality that $x_0=0.$
Hence, there exists a sequence $(t_n)$ with $t_n\nearrow T^+$ and  
\begin{equation}\label{bdd234}
c_\infty+f(t_n)-h(t_n)\to0\qquad\mbox{in $L_\infty([-\delta,\delta]).$}
\end{equation}
Recalling Lemma~\ref{L:Bdd}, we find a constant $c_1=c_1(M)$ such that the velocity~${v_2(t)\!=\!(v_2^1(t),v_2^2(t))}$ satisfies
\begin{align}\label{bdd2}
\|v_2(t)\|_{L_\infty(\0_2(t))}\leq c_1,\qquad t\in[0,T^+).
\end{align}
Given $t\in(T^+-\delta/c_1, T^+)$, let $R(t)\coloneqq{}\delta+c_1(t-T^+)$.
Then $R$ is a positive  increasing function with $R(t)\to\delta$ for $t\to T^+$.
We further define the surface area 
\[
S(t)\coloneqq{}\int_{-R(t)}^{R(t)} (c_\infty+f(t,x)-h(t,x))\, dx,\qquad t\in(T^+-\delta/c_1, T^+).
\]
Let $n_0\in\N$ be fixed such that $t_n>T^+-\delta/c_1$ for all~${n\geq n_0}$.  
On the one hand, ${S(t_n)>0}$ for all~${n\geq n_0}$. 
Moreover, the dominated convergence theorem together with~\eqref{bdd234} immediately implies that~$S(t_n)\to 0$ for $n\to\infty$.

On the other hand, given $t\in(T^+-\delta/c_1, T^+)$, Stokes' theorem together with~${{\rm div\,} v_2(t)=0}$ in~${\0_2(t)}$ yields 
\begin{align*}
S'(t)=\int_{h(t,-R(t))}^{c_\infty+f(t,-R(t))}\big(c_1+v_2^1(t,-R(t),y)\big)\, dy+\int_{h(t,R(t))}^{c_\infty+f(t,R(t))}\big(c_1-v_2^1(t,-R(t),y)\big)\, dy.
\end{align*}
Hence, in view of the bound \eqref{bdd2}, we have $S'(t)\geq0$ for all $t\in(T^+-\delta/c_1, T^+)$. 
Consequently,~${S(t_n)\geq S(t_{n_0})>0}$ for all $n\geq n_0$, in contradiction to ~$S(t_n)\to 0$ for $n\to\infty$.
Hence, our assumption was false and the argument is complete.
\end{proof}

\appendix
 \section{An extension of Privalov's theorem}\label{Sec:A}
In this section we fix $p\in(1,\infty)$, $\alpha\in(0,1)$, $\oo\in {\rm BUC}^{\alpha}(\R)\cap L_p(\R)$, and a differentiable function $f:\R\to\R$ with $f'\in {\rm BUC}^{\alpha}(\R)$.
 We study  the function $v\coloneqq{}v(f)[\oo]:\R^2\setminus\Gamma_f\to\R^2$ given by the formula
\begin{align}\label{F:V}
v(x,y)\coloneqq{}v(f)[\oo](x,y)\coloneqq{}\frac{1}{2\pi}\int_\R\frac{(f(s)-y,x-s)}{(x-s)^2+(y-f(s))^2}\oo(s)\, ds,
\end{align}
 where 
 \[
\Gamma_f\coloneqq{}\{(x,f(x))\,:\, x\in\R\}. 
 \]
Let us first note that $v$ is the complex conjugate of a holomorphic function, see \eqref{PHIcont},  so that~$v$ is smooth, that is~${v=:(v^1,v^2)\in {\rm C}^\infty(\R^2\setminus\Gamma_f).}$
For this function we establish below several additional properties. 
In particular we extend Privalov's theorem, cf. e.g. \cite{JKL93}, and prove that~$v$ is $\alpha$-Hölder continuous 
 in the domains above and below the graph~$\Gamma_f$, cf. Theorem~\ref{T:Priva}.

As a first step we show in Lemma~\ref{L:Plemelj} that  the one-sided limits of $v$ when approaching a point on~$\Gamma_f$ from below or from above exist.
This is a consequence of the classical Plemelj formula,  cf. e.g. \cite{JKL93}, and of the observation that 
\begin{align}\label{F:V2}
v(x,y)=\overline{\frac{1}{2\pi i}\int_{\Gamma_f}\frac{\varphi(\xi)}{\xi-z}\, d\xi},\qquad  z=(x,y)\in \R^2\setminus\Gamma_f,
\end{align}
where the function $\varphi:\Gamma_f\to\C$ in the contour integral \eqref{F:V2} is defined by
\[
\varphi(\xi)=-\frac{\oo(1,-f')}{1+f'^2}(s),\qquad \xi=(s,f(s))\in\Gamma_f.
\]
We note that $\varphi\in {\rm BUC}^{\alpha}(\Gamma_f)$, that is $\varphi$ is bounded and
\[
[\varphi]_{\alpha}\coloneqq{} \sup_{\zeta\neq \xi\in \Gamma_f}\frac{| \varphi(\zeta)-\varphi(\xi)|}{|\zeta-\xi|^{\alpha}}<\infty.
\] 
It is suitable to introduce the function $F:\C\setminus\Gamma_f\to \C$ defined by 
\begin{align}\label{PHIcont}
F(z)\coloneqq{}\overline{v(z)}=\frac{1}{2\pi i}\int_{\Gamma_f}\frac{\varphi(\xi)}{\xi-z}\, d\xi.
\end{align}
It is not difficult to prove that this function is holomorphic.
\begin{lemma}\label{L:Plemelj}
Let
\[
\Omega_\pm\coloneqq{}\{(x,y)\in\R^2\,:\, \pm (y-f(x))>0\}.
\]
The restrictions $v_\pm\coloneqq{}v|_{\0_\pm}:\0_\pm\to\R^2$ of the function $v$  defined in \eqref{F:V} extend continuously up to $\Gamma_f$ and, given $x\in\R$, we have
\begin{align}\label{Vary}
v_\pm(x,f(x))=\frac{1}{2\pi }\PV\int_\R\frac{(f(s)-f(x),x-s)}{(x-s)^2+(f(x)-f(s))^2}\oo(s)\, ds\mp\frac{1}{2}\frac{\oo(1,f')}{1+f'^2}(x).
\end{align} 
\end{lemma}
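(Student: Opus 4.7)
The plan is to reduce \eqref{Vary} to the classical Plemelj--Sokhotski jump formula for Cauchy-type integrals with Hölder-continuous density. Parameterize $\Gamma_f$ by $s\mapsto \xi(s):=s+if(s)$ in the direction of increasing $s$, so that $d\xi=(1+if'(s))\,ds$, and rewrite \eqref{PHIcont} as a contour integral with density $\varphi(\xi(s))=-\oo(s)(1-if'(s))/(1+f'(s)^2)$. Since $f'\in{\rm BUC}^\alpha(\R)$, the curve $\Gamma_f$ is a Lyapunov curve, and since $\oo,f'\in{\rm BUC}^\alpha(\R)\cap L_p$, the density $\varphi$ is bounded, $\alpha$-Hölder continuous on $\Gamma_f$, and decays at infinity fast enough for all tail integrals to converge absolutely.

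Next I would invoke the classical Plemelj--Sokhotski formula (local analysis near the base point combined with a controlled tail) to obtain, for every $z_0=x+if(x)\in\Gamma_f$, the existence of the non-tangential limits
\[
F_\pm(z_0):=\lim_{\substack{z\to z_0\\ z\in \Omega_\pm}}F(z)=\frac{1}{2\pi i}\,\PV\int_{\Gamma_f}\frac{\varphi(\xi)}{\xi-z_0}\,d\xi \pm \frac{1}{2}\varphi(z_0).
\]
The identification of $\Omega_+$ with the $+$ side comes from checking orientation: rotating the tangent $1+if'(s)$ by $+\pi/2$ yields $-f'(s)+i$, which points into $\Omega_+$. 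Taking complex conjugates gives $v_\pm=\overline{F_\pm}$. The boundary value $\pm\tfrac12\overline{\varphi(z_0)}$ equals $\mp\tfrac12\,\oo(1,f')/(1+f'^2)(x)$ in real-vector notation, matching the jump term in \eqref{Vary}. For the singular part, I would expand $d\xi/(\xi-z_0)$ by rationalizing the denominator and multiplying by $\varphi$; after separating real and imaginary parts, the terms involving $f'$ cancel and leave precisely the principal-value integrand
\[
\frac{1}{2\pi}\,\PV\int_\R\frac{(f(s)-f(x),\,x-s)}{(x-s)^2+(f(x)-f(s))^2}\,\oo(s)\,ds
\]
of \eqref{Vary}.

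The main obstacle is the bookkeeping when translating between the complex Cauchy-integral formulation (where the Plemelj theorem is most conveniently stated) and the real-vector formulation used in \eqref{F:V}--\eqref{Vary}; in particular, tracking that orientation together with the final conjugation produces the sign $\mp$ rather than $\pm$ in the jump term. A secondary point is that the classical Plemelj theorem is usually stated for bounded Lyapunov contours, whereas $\Gamma_f$ is unbounded; however, the $L_p$-decay of $\oo$ makes the contribution of $\Gamma_f\setminus\{|s-x|<R\}$ a smooth, uniformly convergent correction as $z\to z_0$, so the standard local proof near $z_0$ applies verbatim and delivers the stated Hölder extension up to $\Gamma_f$.
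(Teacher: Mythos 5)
Your proposal is correct and follows essentially the same route as the paper: rewrite $v$ as the conjugate of a Cauchy integral \eqref{PHIcont} over $\Gamma_f$ with density $\varphi$, localize near $z_0$, apply the classical Plemelj--Sokhotski formula, and check the conjugation/orientation bookkeeping (your sign analysis and the cancellation $\varphi(\xi)\,d\xi=-\oo(s)\,ds$ are exactly what produce the $\mp$ and the PV integrand in \eqref{Vary}). The one place where the paper is more concrete than your sketch is the treatment of the unbounded contour: rather than just asserting that the tail is a continuous correction, the paper explicitly \emph{closes} the curve by appending a polygonal path $\Gamma_1\subset\overline{\0_+}$ to the local piece $\Gamma_0$ to form a bounded closed contour $\Gamma=\Gamma_0+\Gamma_1$, and extends $\varphi$ to a function $\wt\varphi\in{\rm BUC}^\alpha(\Gamma)$ with explicitly controlled Hölder norm (see \eqref{UBound}); it then applies classical Plemelj on $\Gamma$ and absorbs the contributions of $\Gamma_f-\Gamma_0$ and $\Gamma_1$ via dominated convergence. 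This closed-contour construction with the extended density is the key device that makes the reduction airtight and is also reused in Lemma~\ref{L:Bdd} and Theorem~\ref{T:Priva}, so it is worth making explicit rather than leaving it at the level of ``the standard local proof applies verbatim.''
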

\begin{proof}
Let  $z_0\coloneqq{}(x_0,f(x_0))\in\Gamma_f$.
In order to prove that $v_+$ can be extended continuously in~$z_0$ we consider  the polygonal path  $\Gamma_1\subset\overline{\0_+}$
defined by  the  segments $[{(x_0+1,f(x_0+1))}, (x_0+1,D)]$, $[(x_0+1,D),(x_0-1, D)]$, and $[(x_0-1,D),(x_0-1, f(x_0-1))]$  and oriented counterclockwise.  
 Here we set $D\coloneqq 1+2\|f'\|_\infty+ \max\{f(x_0-1),f(x_0+1)\}$.
 Moreover, we let
 \[
\Gamma_0\coloneqq{}\{(x,f(x))\,:\, |x-x_0|\leq 1\} 
 \]
 and we define the closed curve $\Gamma\coloneqq{}\Gamma_0+\Gamma_1$ which is again oriented counterclockwise.
 Additionally, we define the function $\wt\varphi\in{\rm BUC}^{\alpha}(\Gamma)$ by setting
 \[
\wt\varphi(\xi)\coloneqq{}
\left\{
\begin{array}{cllll}
\varphi(\xi)&,&\xi\in\Gamma_0,\\[1ex]
\varphi_+&,&\xi=(x_0+1,y), \, f(x_0+1)\leq y\leq D,\\[1ex]
\cfrac{(1+x_0-x)\varphi_-+(1+x-x_0)\varphi_+}{2}&,& \xi=(x,D),\, |x-x_0|\leq 1,\\[1ex]
\varphi_-&,&\xi=(x_0-1,y), \, f(x_0-1)\leq y\leq D,
\end{array}
\right. 
 \]
 where  $\varphi_\pm\coloneqq{} \varphi(x_0\pm 1,f(x_0\pm 1))$.
 It is not difficult to check that
 \begin{align}\label{UBound}
 \|\wt\varphi\|_\infty\leq \|\varphi\|_\infty,\quad [\wt\varphi]_{\alpha}\leq 2\|\varphi\|_\infty+ [\varphi]_{\alpha},\quad |\Gamma|\leq 7(\|f'\|_\infty+1).
 \end{align}
Given $z\in\0_+$  which is sufficiently close to $z_0$, it then holds
 \begin{align*}
 \int_{\Gamma_f}\frac{\varphi(\xi)}{\xi-z}\, d\xi&=\int_{\Gamma_f-\Gamma_0}\frac{\varphi(\xi)}{\xi-z}\, d\xi+\int_{\Gamma}\frac{\wt\varphi(\xi)}{\xi-z}\, d\xi-\int_{\Gamma_1}\frac{\wt\varphi(\xi)}{\xi-z}\, d\xi.
 \end{align*}
Since $z_0\in\Gamma_0$, Lebegue's dominated convergence shows that 
 \begin{align*}
 \int_{\Gamma_f-\Gamma_0}\frac{\varphi(\xi)}{\xi-z}\, d\xi-\int_{\Gamma_1}\frac{\wt\varphi(\xi)}{\xi-z}\, d\xi
 \underset{z\to z_0}\longrightarrow\int_{\Gamma_f-\Gamma_0}\frac{\varphi(\xi)}{\xi-z_0}\, d\xi-\int_{\Gamma_1}\frac{\wt\varphi(\xi)}{\xi-z_0}\, d\xi.
 \end{align*}
 Additionally, according to the Plemelj formula,   cf. e.g. \cite{JKL93}, it holds that 
  \begin{align*}
\frac{1}{2\pi i} \int_{\Gamma}\frac{\wt\varphi(\xi)}{\xi-z}\, d\xi\underset{z\to z_0}\longrightarrow\frac{1}{2\pi i}\PV\int_{\Gamma}\frac{\wt\varphi(\xi)}{\xi-z_0}\, d\xi+\frac{1}{2}\varphi(z_0).
 \end{align*}
 These two convergences imply that $v_+$ can indeed be extended continuously in~$z_0$, the value of the extension in $z_0$ being as given in formula~\eqref{Vary}.
 Finally, the corresponding claim for~$v_-$ follows  by   arguing similarly.
\end{proof}

We next prove in Lemma~\ref{L:Bdd} that $v$ is bounded in $\R^2\setminus\Gamma_f$. 
In fact we bound in Lemma~\ref{L:Bdd}  the~$L_\infty$-norm of~$v$ by a constant that depends explicitly on the norms of the functions $f$ and~$\oo$.

\begin{lemma}\label{L:Bdd}
There exists a constant $C$, which is independent of $f$ and $\oo$, such that  
\begin{align}\label{inftybdd}
\|v\|_\infty\leq  C(\|\oo\|_{p}+\|\oo\|_{{\rm BUC}^{\alpha}})(1+\|f'\|_{{\rm BUC}^{\alpha}})^2.
\end{align} 
\end{lemma}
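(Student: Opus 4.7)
The plan is to derive \eqref{inftybdd} by working with the complex Cauchy representation $\ov{v(z_0)}=F(z_0)$ provided by \eqref{PHIcont} and a careful splitting of the resulting integral. Parametrizing $\Gamma_f$ by $\zeta(s)\coloneqq{} s+if(s)$ and observing that $\varphi(\zeta(s))(1+if'(s))=-\oo(s)$, one rewrites
\[
\ov{v(z_0)}=-\frac{1}{2\pi i}\int_\R\frac{\oo(s)}{\zeta(s)-z_0}\, ds,\qquad z_0=x_0+iy_0\in\C\setminus\Gamma_f,
\]
so it is enough to bound the modulus of this integral uniformly in $z_0$.

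Fix $z_0$, set $b\coloneqq{} y_0-f(x_0)\neq 0$, and split $F(z_0)=F_1+F_2+F_3$ with
\begin{align*}
F_1&\coloneqq{}-\frac{1}{2\pi i}\int_{|s-x_0|>1}\frac{\oo(s)}{\zeta(s)-z_0}\, ds,\\
F_2&\coloneqq{}-\frac{1}{2\pi i}\int_{|s-x_0|\leq 1}\frac{\oo(s)-\oo(x_0)}{\zeta(s)-z_0}\, ds,\\
F_3&\coloneqq{}-\frac{\oo(x_0)}{2\pi i}\int_{|s-x_0|\leq 1}\frac{ds}{\zeta(s)-z_0}.
\end{align*}
The elementary bound $|\zeta(s)-z_0|\geq|s-x_0|$ together with Hölder's inequality pairing $\oo\in L_p(\R)$ with $|s-x_0|^{-1}\in L_{p'}(\{|s-x_0|>1\})$ (which is integrable since $p'=p/(p-1)\in(1,\infty)$) controls $|F_1|\leq C(p)\|\oo\|_p$. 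The same lower bound combined with $|\oo(s)-\oo(x_0)|\leq[\oo]_\alpha|s-x_0|^\alpha$ reduces $F_2$ to the integral of $|u|^{\alpha-1}$ over $|u|\leq 1$, yielding $|F_2|\leq[\oo]_\alpha/(\pi\alpha)$.

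The main obstacle is $F_3$, which requires showing that $I(z_0)\coloneqq{}\int_{|s-x_0|\leq 1}(\zeta(s)-z_0)^{-1}\, ds$ is uniformly bounded; since $|\zeta(s)-z_0|$ barely dominates $|s-x_0|$, cancellation must be exploited. I would Taylor-expand $\zeta$ at $x_0$: set $c\coloneqq{} 1+if'(x_0)$ and $\rho(s)\coloneqq{} f(s)-f(x_0)-(s-x_0)f'(x_0)$, so that $\zeta(s)-z_0=c(s-x_0)-ib+i\rho(s)$ with $|\rho(s)|\leq(1+\alpha)^{-1}[f']_\alpha|s-x_0|^{1+\alpha}$. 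The identity
\[
\frac{1}{\zeta(s)-z_0}=\frac{1}{c(s-x_0)-ib}-\frac{i\rho(s)}{(c(s-x_0)-ib)(\zeta(s)-z_0)}
\]
separates $I=P+R$, with the remainder $R$ dominated by $[f']_\alpha|u|^{\alpha-1}$ (using $|cu-ib|\geq|u|$ and $|\zeta(s)-z_0|\geq|u|$), hence $|R|\leq C[f']_\alpha/\alpha$. For the principal part $P(z_0)\coloneqq{}\int_{|u|\leq 1}(cu-ib)^{-1}\, du$, a continuous branch of $\log$ along the segment $u\mapsto cu-ib$ (which avoids $0$ since $b\neq 0$) gives $|P|\leq|c|^{-1}(\pi+|\log(|c-ib|/|{-c-ib}|)|)$; the mean value theorem applied to $t\mapsto\log(1+t^2)$ bounds the log-ratio by $|b|$, so $|P|\leq(|b|+\pi)/|c|$. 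A dichotomy on $|b|$ versus $|c|=\sqrt{1+f'(x_0)^2}\geq 1$ closes the estimate: if $|b|\leq 2|c|$, this gives $|P|\leq 2+\pi$; if $|b|>2|c|$, then $|cu-ib|\geq|b|/2$ for $|u|\leq 1$, so $|P|\leq 4/|b|\leq 2/|c|\leq 2$.

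Summing the three contributions produces $|v(z_0)|\leq C(\|\oo\|_p+\|\oo\|_{{\rm BUC}^\alpha})(1+\|f'\|_{{\rm BUC}^\alpha})$, which is stronger than \eqref{inftybdd}. The crux is the uniform bound on $P(z_0)$: the estimate $(|b|+\pi)/|c|$ alone would blow up as $|b|/|c|\to\infty$, and the case split on $|b|/|c|$ is what makes the argument work.
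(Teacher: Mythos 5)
Your argument is correct, and it takes a genuinely different route from the paper's. The paper proceeds in three steps: it first bounds the boundary traces $v_\pm|_{\Gamma_f}$ via the Plemelj representation~\eqref{Vary}, then bounds $v$ at distance $\geq 1/4$ from $\Gamma_f$ by the $L_p$--$L_{p'}$ pairing, and finally controls $v$ near $\Gamma_f$ by comparing $v(z)$ with $v(z_\Gamma)$ at the nearest boundary point, using the truncated contour $\Gamma=\Gamma_0+\Gamma_1$ and the modified density $\wt\varphi$ together with Plemelj's formula again. You instead bound the Cauchy integral $F(z_0)$ directly and uniformly, for every $z_0\in\C\setminus\Gamma_f$, by the split $F_1+F_2+F_3$ along vertical distance from $(x_0,f(x_0))$; the only genuinely singular term $F_3$ is handled by a first-order Taylor expansion of $\zeta$ at $x_0$, which reduces the problem to the explicit primitive of $(cu-ib)^{-1}$, and your dichotomy $|b|\lessgtr 2|c|$ is exactly the right device to make the resulting logarithm uniformly bounded. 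This bypasses Plemelj's formula, the auxiliary closed contour, and the near/far case split entirely, and it also yields a slightly sharper estimate, linear rather than quadratic in $1+\|f'\|_{{\rm BUC}^\alpha}$. The paper's more elaborate route is, however, not wasted effort: the auxiliary contour $\Gamma$ and the density $\wt\varphi$ are reused in the proof of the H\"older estimate in Theorem~\ref{T:Priva}, whereas your decomposition is tailored to the $L_\infty$ bound alone.
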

\begin{proof}
We devise the proof in several steps.\medskip

\noindent{Step 1.}   In this step we provide bounds for  the restrictions of $v_{\pm}$ to $\Gamma_f$.
Given $x\in\R$, it follows from~\eqref{Vary} and Hölder's inequality that
\begin{align*}
|v_\pm(x,f(x))|&\leq \|\oo\|_\infty+\int_{\{|s|>1\}}\Big|\frac{\oo(x-s)}{s}\Big|\, ds+\Big|\PV\int_{-1}^1\frac{(f(x-s)-f(x),s)}{s^2+(f(x)-f(x-s))^2}\oo(x-s)\, ds\Big|\\[1ex]
&\leq \|\oo\|_\infty+C\|\oo\|_p+I_1+I_2,
\end{align*}
where
\begin{align*}
I_1&\coloneqq{}\Big|\int_{-1}^1\frac{(f(x-s)-f(x),s)}{s^2+(f(x-s)-f(s))^2}(\oo(x-s)-\oo(x)\, ds\Big|\leq[\oo]_{\alpha} \int_{-1}^1|s|^{\alpha-1}\, ds\leq C[\oo]_{\alpha},\\[1ex]
I_2&\coloneqq{}\|\oo\|_\infty\Big|\PV\int_{-1}^1\frac{(f(x-s)-f(x),s)}{s^2+(f(x)-f(x-s))^2}\, ds\Big|.
\end{align*}
Concerning $I_2$, we have
\begin{align*}
I_2&\leq \|\oo\|_\infty\int_0^1\Big|\frac{(f(x-s)-f(x),s)}{s^2+(f(x)-f(x-s))^2}+\frac{(f(x+s)-f(x),-s)}{s^2+(f(x)-f(x+s))^2}\Big|\, ds\\[1ex]
&\leq3\|\oo\|_\infty\int_0^1\frac{|f(x+s)-2f(x)-f(x-s)|}{s^2}\, ds\leq6\|\oo\|_\infty[f']_{\alpha}\int_0^1|s|^{\alpha-1}\, ds \leq C\|\oo\|_\infty[f']_{\alpha}.
\end{align*}
Gathering these estimates we conclude that 
\begin{align}\label{Bound1}
\|v_\pm|_{\Gamma_f}\|_{\infty}\leq C(\|\oo\|_{p}+\|\oo\|_{{\rm BUC}^{\alpha}})(1+\|f'\|_{{\rm BUC}^{\alpha}}).
\end{align}

\noindent{Step 2.}  Given $z=(x,y)\in\R^2,$ we set  $d(z)\coloneqq{}{\rm dist\,}(z,\Gamma_f)$. We next prove that
\begin{align}\label{Bound2}
\sup_{\{1/4\leq d(z)\}}|v_\pm(z)|\leq C\|\oo\|_{p}.
\end{align}
Indeed,  if $1/4\leq d(z)$, then $\sqrt{s^2+(y-f(x-s))^2}\geq \max\{1/4,\,|s|\}$ for all $s\in\R$ and together with Hölder's inequality  we conclude from \eqref{F:V} that
\begin{align*}
|v(z)|\leq\int_\R\frac{1}{\max\{1/4,\,|s|\}}|\oo(x-s)|\, ds\leq C\|\oo\|_p.
\end{align*}

\noindent{Step 3.} In this final step we prove that
\begin{align}\label{Bound3}
\sup_{\{0<d(z)<1/4\}}|v_\pm(z)|\leq C(\|\oo\|_{p}+\|\oo\|_{{\rm BUC}^{\alpha}})(1+\|f'\|_{{\rm BUC}^{\alpha}})^2.
\end{align}
We first consider the case when  $z\in\0_+$.
 We associate to $z$ a point  $z_\G=(x_0,f(x_0))\in\Gamma_f$ such that
\[
d(z)=|z-z_\G|\in(0,1/4).
\]
Let $\G=\Gamma_0+\Gamma_1$ and $\wt\varphi $ be as defined in the proof of Lemma~\ref{L:Plemelj} (with $z_\Gamma$ instead of $z_0$).
Recalling~\eqref{Bound1}, we have
\begin{align*}
|v_+(z)|\leq |v_+(z)-v_+(z_\G)|+|v_+(z_\G)|&\leq T_1+T_2+T_3+C(\|\oo\|_{p}+\|\oo\|_{{\rm BUC}^{\alpha}})(1+\|f'\|_{{\rm BUC}^{\alpha}}),
\end{align*}
where
\begin{align*}
T_1&\coloneqq{}\Big|\int_{\Gamma_f-\Gamma_0}\Big(\frac{\varphi(\xi)}{\xi-z}-\frac{\varphi(\xi)}{\xi-z_\G}\Big)\, d\xi\Big|,\qquad
T_2\coloneqq{}\Big|\int_{\Gamma_1}\Big(\frac{\wt\varphi(\xi)}{\xi-z}-\frac{\wt\varphi(\xi)}{\xi-z_\G}\Big)\, d\xi\Big|,\\[1ex]
T_3&\coloneqq{}\Big|\frac{1}{2\pi i}\int_{\Gamma}\frac{\wt\varphi(\xi)}{\xi-z}\, d\xi-\frac{1}{2\pi i}\PV\int_{\Gamma}\frac{\wt\varphi(\xi)}{\xi-z_\G}\, d\xi-\frac{1}{2}\varphi(z_\G)\Big|.
\end{align*}
Given $\xi\in\Gamma_1$, we have  $\min\{|\xi-z|,\,|\xi-z_\G|\}\geq |\xi-z_\G|-1/4\geq 3/4$ and~\eqref{UBound} yields 
\[
T_2\leq 2\|\varphi\|_\infty|\G_1|\cdot|z-z_\G|\leq C\|\oo\|_\infty(1+\|f'\|_{\infty}).
\]  
Moreover, since $\min\{|\xi-z|,\,|\xi-z_\G|\}\geq \max\{3/4,\, |s-x_0|/2\}$ for all~${\xi=(s,f(s))\in\Gamma_f-\Gamma_0}$, Hölder's inequality leads us to
\begin{align*}
T_1&\leq \frac{8}{3}\|\oo\|_p |z-z_\G| \Big(\int_\R \frac{1}{\max\{3/2, |s|\}^{p'}}\, ds\Big)^{1/p'}\leq C\|\oo\|_p,
\end{align*}
where $p'\in(1,\infty)$ is the adjoint exponent to $p$, that is $p^{-1}+{p'}^{-1}=1$. In order to estimate $T_3$ we first note that
\begin{align}\label{INTF}
\frac{1}{2\pi i}\int_{\Gamma}\frac{1}{\xi-z}\, d\xi=1 \qquad\text{and}\qquad \frac{1}{2\pi i}\PV\int_{\Gamma}\frac{1}{\xi-z_\G}\, d\xi=\frac{1}{2}.
\end{align}
The first relation follows from Cauchy's integral formula. 
The second identity  is a direct consequence of Plemelj's formula, cf. e.g. \cite{JKL93}. 
Using these two identities we get
\begin{align*}
T_3&=\Big|\frac{1}{2\pi i}\int_{\Gamma}\frac{\wt\varphi(\xi)-\wt \varphi(z_\G)}{\xi-z}\, d\xi-\frac{1}{2\pi i}\int_{\Gamma}\frac{\wt\varphi(\xi)-\wt\varphi(z_\G)}{\xi-z_\G}\, d\xi\Big|
\leq  T_{3a}+T_{3b},
\end{align*}
where
\begin{align*}
T_{3a}\coloneqq{}\Big| \int_{\Gamma_1}\frac{(z-z_\G)(\wt\varphi(\xi)-\wt \varphi(z_\G))}{(\xi-z)(\xi-z_\G)}\, d\xi\Big|\quad\text{and}\quad
T_{3b}\coloneqq{}\Big| \int_{\Gamma_0}\frac{(z-z_\G)(\varphi(\xi)- \varphi(z_\G))}{(\xi-z)(\xi-z_\G)}\, d\xi\Big|.
\end{align*}
The arguments used to estimate $T_2$   lead us to 
\begin{align*}
T_{3a}\leq\|\varphi\|_\infty|\G_1|\leq C\|\oo\|_\infty(1+\|f'\|_{\infty}).
\end{align*}
In order to estimate $T_{3b}$  we note that $|\xi-z|\geq d(z)=|z-z_\G|$ and  $|\xi-z_\G|\geq |s-x_0|$ for all~${\xi=(s,f(s))\in\Gamma_0}$, and therefore 
 \begin{align*}
T_{3b}&\leq (1+\|f'\|_\infty)[\varphi]_{\alpha}\int_{\{|s|<1\}}s^{\alpha-1}\, ds\leq C(1+\|f'\|_\infty)[\varphi]_{\alpha}.
\end{align*}
Observing that
\[
[\varphi]_{\alpha}\leq C  \|\oo\|_{{\rm BUC}^{\alpha}}(1+\|f'\|_{{\rm BUC}^{ \alpha}}),
\]
the latter arguments show that \eqref{Bound3} holds for $z\in\0_+$. Arguing along the same lines it is easy to see that \eqref{Bound3} is satisfied also for $z\in\0_-$.
The claim~\eqref{inftybdd} follows now from~\eqref{Bound1}-\eqref{Bound3}.
\end{proof}

We now extend in Theorem~\ref{T:Priva} Privalov's theorem to the setting considered herein where the contour integral in \eqref{F:V} is defined over an unbounded graph.
\begin{thm}\label{T:Priva}
The restrictions $v_\pm\coloneqq{}v|_{\0_\pm}$ of the function $v$  defined in \eqref{F:V} satisfy 
$${v_\pm\in {\rm BUC}^{\alpha}(\0_\pm).}$$
\end{thm}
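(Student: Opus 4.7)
The strategy is to combine three estimates: (a) $\alpha$-Hölder regularity of the one-sided traces $v_\pm|_{\Gamma_f}$; (b) the interior gradient bound $|F'(z)| \leq C\, d(z)^{\alpha-1}$ on $\R^2\setminus\Gamma_f$, where $F=\overline{v}$ is the holomorphic function from \eqref{PHIcont} and $d(z):=\mathrm{dist}(z,\Gamma_f)$; and (c) the non-tangential Privalov-type estimate $|v(z) - v_\pm(z_\Gamma)| \leq C\, d(z)^{\alpha}$ for $z\in\0_\pm$ and $z_\Gamma \in\Gamma_f$ a nearest point to $z$. The main obstacle---the unboundedness of $\Gamma_f$---is handled throughout by the same localization (closed curve $\Gamma = \Gamma_0+\Gamma_1$ and the extended $\alpha$-Hölder density $\wt\varphi$ on $\Gamma$) already introduced in the proof of Lemma~\ref{L:Bdd}.

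\textbf{Trace and interior estimate.} For (a), I would start from the Plemelj formula \eqref{Vary}: the pointwise contribution $\tfrac{1}{2}\oo(1,f')/(1+f'^2)$ is manifestly $\alpha$-Hölder, while the Hölder regularity of the PV integral at two points $x, \tilde x$ follows from a standard Privalov-type calculation, subtracting $\oo(x)$ inside the integrand to exploit the $1/s$ cancellation and splitting the domain of integration at scale $|x - \tilde x|$, using the $\alpha$-Hölder regularity of $\oo$ and $f'$ (as for $I_1, I_2$ in the proof of Lemma~\ref{L:Bdd}). For (b), differentiation under the integral in \eqref{PHIcont} yields $F'(z) = (2\pi i)^{-1}\int_{\Gamma_f}\varphi(\xi)(\xi-z)^{-2}\,d\xi$; splitting $\Gamma_f$ into $\Gamma_0$ and its complement and subtracting $\varphi(z_\Gamma)$ on $\Gamma_0$ reduces the singular contribution to
\[
[\varphi]_\alpha \int_0^1 \frac{r^\alpha}{\max(d(z),r)^2}\,dr \leq C\, d(z)^{\alpha-1},
\]
while the compensating term $\varphi(z_\Gamma)\int_{\Gamma_0}d\xi/(\xi-z)^2$ and the far-field integral over $\Gamma_f\setminus\Gamma_0$ are uniformly bounded.

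\textbf{Boundary estimate and conclusion.} Estimate (c) is the main obstacle. Given $z\in\0_+$ with $d(z) < 1/4$ and nearest point $z_\Gamma=(x_0,f(x_0))\in\Gamma_f$, I would repeat the localization of Step~3 in the proof of Lemma~\ref{L:Bdd}---introduce $\Gamma = \Gamma_0 + \Gamma_1$ and $\wt\varphi$---and, via the identities \eqref{INTF}, decompose $F(z)-\overline{v_+(z_\Gamma)}$ as the classical Privalov integral
\[
\frac{1}{2\pi i}\int_\Gamma \frac{\wt\varphi(\xi)-\wt\varphi(z_\Gamma)}{\xi-z}\,d\xi - \frac{1}{2\pi i}\,\PV\!\int_\Gamma \frac{\wt\varphi(\xi)-\wt\varphi(z_\Gamma)}{\xi-z_\Gamma}\,d\xi
\]
over the bounded Hölder curve $\Gamma$, plus tail integrals over $\Gamma_f-\Gamma_0$ and $\Gamma_1$. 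The tails are of order $d(z)$, hence of order $d(z)^\alpha$, directly from the bounds on $T_1, T_2$ in the proof of Lemma~\ref{L:Bdd}. For the Privalov integral I would split $\Gamma$ at the scale $d(z)$: on $\{|\xi-z_\Gamma|\leq 2 d(z)\}$ each of the two integrals is estimated separately using $|\wt\varphi(\xi)-\wt\varphi(z_\Gamma)|\leq [\wt\varphi]_\alpha |\xi-z_\Gamma|^\alpha$ together with $|\xi-z|\geq d(z)$ (the PV term being absolutely convergent there); on $\{|\xi-z_\Gamma|> 2d(z)\}$ the difference of the two kernels produces a factor $d(z)/|\xi-z_\Gamma|^2$, yielding the bound $d(z)\int_{2d(z)}^{|\Gamma|} r^{\alpha-2}\,dr \leq C\,d(z)^\alpha$. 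The argument for $\0_-$ is identical. To conclude: for $z_1, z_2\in\0_+$ with $|z_1-z_2|<d(z_1)/2$, integrating (b) along the segment $[z_1,z_2]$ (on which the distance to $\Gamma_f$ is at least $d(z_1)/2$) gives $|v(z_1)-v(z_2)|\leq C|z_1-z_2|\,d(z_1)^{\alpha-1}\leq C|z_1-z_2|^\alpha$; in the opposite regime $|z_1-z_2|\geq d(z_1)/2$, one has $d(z_i)\leq 2|z_1-z_2|$ for $i=1,2$, and chaining $z_1\to z_{1,\Gamma}\to z_{2,\Gamma}\to z_2$ via (c), (a), (c) completes the proof.
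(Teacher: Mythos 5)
Your plan is correct in outline and uses the paper's core machinery --- the localization $\G=\Gamma_0+\Gamma_1$ with the extended density $\wt\varphi$, the identities \eqref{INTF}, a near/far splitting at scale $d(z)$ in the boundary layer, and a gradient bound of order $d(z)^{\alpha-1}$ away from $\Gamma_f$ --- but it organizes the chaining differently from the paper. The paper proves the boundary estimate \eqref{LPo} for an \emph{arbitrary} boundary point $z_0\in\Gamma_f$ with $|z-z_0|\le 1/4$ (not only the nearest one), and then, when $|z-z'|\ge{\rm dist\,}(S_{zz'},\Gamma_f)$, chains the two interior points through the single point $\ov\zeta_\G$ nearest to the segment $S_{zz'}$; in this way it never needs H\"older continuity of the trace $v_\pm|_{\Gamma_f}$ as a separate ingredient (also, in the interior regime it integrates $F'$ using $\int_{\Gamma_f}(\xi-\zeta)^{-2}d\xi=0$ over the whole unbounded graph rather than localizing the subtraction to $\Gamma_0$, a cosmetic difference). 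Your version of (c) at the nearest point is genuinely simpler: with the common base value $\wt\varphi(z_\G)$ in both numerators, the far region gives $d(z)\int_{2d(z)}r^{\alpha-2}dr\lesssim d(z)^{\alpha}$ with no logarithmic loss, so you avoid the paper's terms $S_1$, $S_{3c}$ and the explicit logarithm computation. The price is your ingredient (a), which the paper's formulation of \eqref{LPo} is designed to circumvent: the trace estimate is true but only sketched, and the hidden work is nontrivial. In the two-point Privalov computation for \eqref{Vary}, after subtracting $\oo(x)$ resp. $\oo(\tilde x)$, the matching-scale contribution contains a factor of the type $(\oo(x)-\oo(\tilde x))\,\PV\!\int_{\{|x-\tilde x|\lesssim|s|\lesssim 1\}}(\cdots)\,ds$, and avoiding a $\log(1/|x-\tilde x|)$ loss requires exactly the symmetric-window cancellation that the paper performs explicitly when bounding $S_{3c}$; moreover the tails $|s|>1$ must be treated via kernel smoothness in $x$ together with $\oo\in L_p(\R)$, since a naive use of $[\oo]_\alpha$ there diverges. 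Two small omissions, easily repaired: you should dispose of widely separated points via the $L_\infty$-bound of Lemma~\ref{L:Bdd} (as in the paper's Step 1), since your estimate (c) is only set up for $d(z)<1/4$; and in your second regime one only gets $d(z_2)\le 3|z_1-z_2|$, not $2|z_1-z_2|$, which is harmless.
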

\begin{proof}
We only establish the Hölder continuity of $v_+=:(v_+^1,v_+^2)$ (that of $v_-$ follows by using  similar arguments). We devise the proof in several steps. \medskip

\noindent{Step 1.}  Let $z,\, z'\in\ov{\0_+}$ satisfy $|z-z'|> 1/8$. Then according to Lemma~\ref{L:Plemelj} and Lemma~\ref{L:Bdd} we have
\[
|v_+(z)-v_+(z')|\leq 2\|v\|_\infty \leq 16\|v\|_\infty|z-z'|^{\alpha}\leq C|z-z'|^{\alpha}.
\]
\noindent{Step 2.}  Given $z\in\R^2,$ we set again  $d(z)\coloneqq{}{\rm dist\,}(z,\Gamma_f)$. 
Assume now that $z,\, z'\in\0_+$ are chosen such that~$|z-z'|\leq 1/8.$
Then, letting $S_{zz'}\coloneqq{}\{(1-t)z+tz'\,:\, t\in[0,1]\}$ denote the  segment that connects $z$ and $z'$
 there exists at least a point  $\ov\zeta\in S_{zz'}$ such that 
\[
d(\ov\zeta)=|\ov\zeta-\ov\zeta_\G|={\rm dist\,} (S_{zz'},\Gamma_f).
\]
We distinguish two cases. \medskip

\noindent{Step 2a.}  If $|z-z'|<|\ov\zeta-\ov\zeta_\G|$, then $S_{zz'}\subset \0_+.$ Then we have
\begin{align*}
|v_+(z)-v_+(z')|= |F(z)-F(z')|=\Big|\int_{S_{zz'}}F'(\zeta)\, d\zeta\Big|=\Big|\int_{S_{zz'}}\Big(\frac{1}{2\pi i}\int_{\Gamma_f}\frac{\varphi(\xi)}{(\xi-\zeta)^2}\, d\xi\Big)\, d\zeta\Big|,
\end{align*} 
where $F$ is the holomorphic function defined in \eqref{PHIcont}. 
Given $\zeta\in S_{zz'}$, it holds that 
\[
\int_{\Gamma_f}\frac{1}{(\xi-\zeta)^2}\, d\xi=0
\]
and therewith we get
\begin{align*}
|v_+(z)-v_+(z')|&=\Big|\int_{S_{zz'}}\Big(\frac{1}{2\pi i}\int_{\Gamma_f}\frac{\varphi(\xi)-\varphi(\zeta_\G)}{(\xi-\zeta)^2}\, d\xi\Big)\, d\zeta\Big|\\[1ex]
&\leq|z-z'|\sup_{\zeta\in S_{zz'}}\Big|\int_{\Gamma_f}\frac{\varphi(\xi)-\varphi(\zeta_\G)}{(\xi-\zeta)^2}\, d\xi\Big|\\[1ex]
&\leq |z-z'|\cdot[\varphi]_{\alpha}\sup_{\zeta\in S_{zz'}}\int_{\Gamma_f}\frac{|\xi-\zeta_\G|^{\alpha}}{|\xi-\zeta|^2}\, |d\xi|.
\end{align*} 
Recalling the definition of $\zeta_\G$, we have $|\xi-\zeta_\G|\leq |\xi-\zeta|+|\zeta-\zeta_\G|\leq 2|\xi-\zeta|$ for all $\zeta\in S_{zz'}$ and~${\xi\in\Gamma_f}$,
hence $|\xi-\zeta_\G|+|\zeta-\zeta_\G|\leq 3|\xi-\zeta|$. 
Noticing also that  $|z-z'|<|\zeta-\zeta_\G|$ for~${\zeta\in S_{zz'}}$, we obtain in view of these inequalities 
\begin{align*}
\int_{\Gamma_f}\frac{|\xi-\zeta_\G|^{\alpha}}{|\xi-\zeta|^2}\, |d\xi|&\leq 9\int_{\Gamma_f}(|\xi-\zeta_\G|+|\zeta-\zeta_\G|)^{\alpha-2}\, |d\xi|\\[1ex]
&\leq 9(1+\|f'\|_\infty)\int_{\R}(|s|+|\zeta-\zeta_\G|)^{ \alpha-2}\,  ds\\[1ex]
&\leq C |\zeta-\zeta_\G|^{\alpha-1}\leq C|z-z'|^{\alpha-1},
\end{align*} 
and therefore $|v_+(z)-v_+(z')|\leq C |z-z'|^{\alpha}. $\medskip

\noindent{Step 2b.} We now consider the second case when $|z-z'|\geq |\ov\zeta-\ov\zeta_\G|$. 
Since $\ov\zeta\in S_{zz'}$ we   have
\begin{align*}
\max\{|z-\ov\zeta_\G|,\,|z'-\ov\zeta_\G|\}\leq \max\{|z-\ov\zeta|,\,|z'-\ov\zeta|\}+|\ov\zeta-\ov\zeta_\G|\leq 2|z-z'|\leq 1/4.
\end{align*} 
Assuming there exits a constant $C>0$ such that 
\begin{align}\label{LPo}
|v_+(z)-v_+(z_0)|\leq C |z-z_0|^{\alpha}\qquad\text{$\forall\, z_0\in\Gamma_f$ and $z\in\overline{\0_+}$ with $|z-z_0|\leq1/4,$}
\end{align}
we then have 
\[
|v_+(z)-v_+(z')|\leq|v_+(z)-v_+(\ov\zeta_\G)|+|v_+(z')-v_+(\ov\zeta_\G)|\leq C\big(|z-\ov\zeta_\G|^{\alpha}+|z'-\ov\zeta_\G|^{\alpha}\big)\leq C|z-z'|^{\alpha},
\]
and the claim then follows.\medskip 

\noindent{Step 3.} It remains to establish \eqref{LPo}. Let $z_0\in\Gamma_f$ and $z\in\0_+$ satisfy  $|z-z_0|\leq1/4,$ and let~${\G=\Gamma_0+\Gamma_1}$ and $\wt\varphi $ be as defined in the proof of Lemma~\ref{L:Plemelj}.
Recalling Lemma~\ref{L:Plemelj}, it follows similarly as in Step 3 of the proof of Lemma~\ref{L:Bdd} that
\begin{align*}
|v_+(z)-v_+(z_0)|&\leq \Big|\int_{\Gamma_f-\Gamma_0}\Big(\frac{\varphi(\xi)}{\xi-z}-\frac{\varphi(\xi)}{\xi-z_0}\Big)\, d\xi\Big|+
\Big|\int_{\Gamma_1}\Big(\frac{\wt\varphi(\xi)}{\xi-z}-\frac{\wt\varphi(\xi)}{\xi-z_0}\Big)\, d\xi\Big|\\[1ex]
&\hspace{0,45cm}+\Big|\frac{1}{2\pi i}\int_{\Gamma}\frac{\wt\varphi(\xi)}{\xi-z}\, d\xi-\frac{1}{2\pi i}\PV\int_{\Gamma}\frac{\wt\varphi(\xi)}{\xi-z_0}\, d\xi-\frac{1}{2}\varphi(z_0)\Big|=:\sum_{i=1}^3T_i,
\end{align*}
with
\[
T_1+T_2\leq C|z-z_0|\leq C|z-z_0|^{\alpha}.
\]
It remains to estimate the term $T_3$ which, in view of \eqref{INTF},  can be written as 
\begin{align*}
T_3(z)&\coloneqq{}\Big|\frac{1}{2\pi i}\int_{\Gamma}\frac{\wt\varphi(\xi)}{\xi-z}\, d\xi-\frac{1}{2\pi i}\PV\int_{\Gamma}\frac{\wt\varphi(\xi)}{\xi-z_0}\, d\xi-\frac{1}{2}\varphi(z_0)\Big|\leq\sum_{i=1}^3S_i, 
\end{align*}
where, letting $z_\Gamma$ be defined by the relation $d(z)=|z-z_\G|$, we set  
\begin{align*}
S_1&\coloneqq{}| \varphi(z_\G)- \varphi(z_0)|, \qquad S_2\coloneqq{}\Big|\int_{\Gamma_1}\Big(\frac{\wt\varphi(\xi)-\wt \varphi(z_\G)}{\xi-z} - \frac{\wt\varphi(\xi)-\wt\varphi(z_0)}{\xi-z_0}\Big)\, d\xi\Big|,\\[1ex]
S_3&\coloneqq{}\Big|\int_{\Gamma_0}\Big(\frac{\varphi(\xi)-\varphi(z_\G)}{\xi-z} - \frac{\varphi(\xi)-\varphi(z_0)}{\xi-z_0}\Big)\, d\xi\Big|. 
\end{align*} 
Noticing that $|z_\G-z_0|\leq |z_\G-z|+|z-z_0|\leq 2|z-z_0|$  we obtain 
\[
S_{1} \leq[\varphi]_{\alpha}|z_\G-z_0|^{\alpha}\leq C|z-z_0|^{\alpha}.
\]
Moreover, given $\xi\in\G_1,$ we have $\min\{|\xi-z|,\, |\xi-z_0|\}\geq 3/4$ and together with \eqref{UBound}  we get
\begin{align*}
S_2&\leq\frac{16}{9}\int_{\Gamma_1} (|\wt\varphi(\xi)-\wt\varphi(z_\G)|\cdot|z-z_0|+|\varphi(z_0)- \varphi(z_\G)|) \,|d\xi|\\[1ex]
&\leq C|\G_1|(\| \varphi\|_\infty|z-z_0| +[\varphi]_{\alpha}|z_0-z_\G|^{\alpha})\leq C|z-z_0|^{\alpha}.
\end{align*}
In order to estimate $S_3$ we let $\eta\coloneqq{}|z-z_0|\in(0,1/4]$, we set  $z_0=:(x_0,f(x_0))$, and we introduce  the curve~$\Gamma_\eta\coloneqq{}\{(s,f(s))\,:\, |s-x_0|\leq 2\eta\}$.
It then holds
\[
S_3\leq S_{3a}+S_{3b}+S_{3c},
\]
where
\begin{align*}
S_{3a}&\coloneqq{}\Big|\int_{\Gamma_\eta}\Big(\frac{\varphi(\xi)-\varphi(z_\G)}{\xi-z} - \frac{\varphi(\xi)-\varphi(z_0)}{\xi-z_0}\Big)\, d\xi\Big|,\\[1ex]
S_{3b}&\coloneqq{}|z-z_0|\cdot\Big|\int_{\Gamma_0-\Gamma_\eta}\frac{\varphi(\xi)-\varphi(z_\G)}{(\xi-z)(\xi-z_0)}\, d\xi\Big|,\qquad 
S_{3c}\coloneqq{}\Big|\int_{\Gamma_0-\Gamma_\eta}\frac{\varphi(z_0)-\varphi(z_\G)}{\xi-z_0} \, d\xi\Big|.
\end{align*}
The relation $|z-z_\G|\leq |z-z_0|=\eta$  implies that $z_\G\in\G_\eta.$ Taking also into account the inequality~$|\xi-z_\G|\leq |\xi-z|+|z-z_\G|\leq 2|\xi-z|$ for~$\xi\in\Gamma_f$, we have
\begin{align*}
S_{3a}&\leq 2[\varphi]_{\alpha}\int_{\Gamma_\eta}(|\xi-z_\G|^{\alpha-1}+|\xi-z_0|^{\alpha-1})\,|d\xi|\leq  C[\varphi]_{\alpha}(1+\|f'\|_\infty)\eta^{\alpha}\leq C|z-z_0|^{\alpha}.
\end{align*}

Given $\xi\in \Gamma_0-\Gamma_\eta$,  the relation~$|\xi-z_0|\geq 2\eta=2|z-z_0|$ leads us to
\begin{equation*}
\begin{aligned}
|\xi-z|&\geq |\xi-z_0|-|z-z_0|\geq\eta=|z-z_0|,\\[1ex]
  2|\xi-z_0|&\geq |\xi-z_0|+|z_0-z|\geq|\xi-z|,\\[1ex]
3|\xi-z|&\geq |\xi-z_0|-|z-z_0|+2|z-z_0|=|\xi-z_0|+|z-z_0|.
\end{aligned}
\end{equation*}
Recalling  also  that $|\xi-z_\G| \leq 2|\xi-z|$, we then obtain
\begin{align*}
S_{3b}&\leq4|z-z_0|\cdot[\varphi]_{\alpha}\int_{\Gamma_0-\Gamma_\eta}|\xi-z|^{\alpha-2}\, |d\xi|\\[1ex]
&\leq C|z-z_0|\cdot[\varphi]_{\alpha}\int_{\Gamma_0-\Gamma_\eta}(|\xi-z_0|+|z-z_0|)^{\alpha-2}\, |d\xi|\\[1ex]
 &\leq C[\varphi]_{\alpha}(1+\|f'\|_\infty)|z-z_0|(2\eta+|z-z_0|)^{\alpha-1}\leq C|z-z_0|^{\alpha}.
\end{align*}

Finally, since $|z_0-z_\G|\leq  2|z-z_0|$, we have
\begin{align*}
S_{3c}&\leq[\varphi]_{\alpha} |z_0-z_\G|^{\alpha}\Big|\int_{\Gamma_0-\Gamma_\eta}\frac{1}{\xi-z_0} \, d\xi\Big|\leq C |z_0-z|^{\alpha}\Big|\int_{\Gamma_0-\Gamma_\eta}\frac{1}{\xi-z_0} \, d\xi\Big|
\end{align*}
and, after identifying the real and imaginary parts of the integral, we get
\begin{align*}
\Big|\int_{\Gamma_0-\Gamma_\eta}\frac{1}{\xi-z_0} \, d\xi\Big|&\leq \Big|\int_{\{2\eta\leq |s|\leq 1\}}\frac{s+f'(x_0-s)(f(x_0)-f(x_0-s))}{s^2+(f(x_0)-f(x_0-s)^2}\, ds\Big|\\[1ex]
&\hspace{0,45cm}+\Big|\int_{\{2\eta\leq |s|\leq 1\}}\frac{sf'(x_0-s)-(f(x_0)-f(x_0-s))}{s^2+(f(x_0)-f(x_0-s)^2}\, ds\Big|\\[1ex]
&\leq\Big|\frac{1}{2}\ln \Big(\frac{(2\eta)^2+(f(x_0)-f(x_0+2\eta))^2}{(2\eta)^2+(f(x_0)-f(x_0-2\eta))^2}\cdot \frac{1+(f(x_0)-f(x_0-1))^2}{1+(f(x_0)-f(x_0+1))^2}\Big)\Big|\\[1ex]
&\hspace{0.56cm}+[f']_{\alpha}\int_{\{2\eta\leq |s|\leq 1\}}|s|^{\alpha-1}\, ds\\[1ex]
&\leq  \ln (1+\|f'\|_\infty^2)+C[f']_{\alpha}.
\end{align*}
Hence, we have shown that $S_{3c}\leq C |z_0-z|^{\alpha}$
and the proof is completed.
\end{proof}

We conclude this section with the following result.
\begin{lemma}\label{L:vanis}
It holds that
\begin{align}\label{inc+iro}
\p_xv^1+\p_yv^2=0=\p_yv^1-\p_xv^2 \qquad\text{in $\R^2\setminus\Gamma_f$}
\end{align} 
and
\begin{align}\label{velova}
v_\pm(z)\to0 \qquad\text{for $z\in\0_\pm$ with $|z|\to\infty$}.
\end{align}
\end{lemma}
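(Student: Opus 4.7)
The identities \eqref{inc+iro} are a restatement of the Cauchy--Riemann equations. As observed in the appendix preamble, $v = \overline{F}$ where $F:\C\setminus\Gamma_f\to\C$ defined by \eqref{PHIcont} is holomorphic; writing $F = u + iw$ with real $u, w$, one has $v^1 = u$ and $v^2 = -w$, so the relations $\partial_x u = \partial_y w$ and $\partial_y u = -\partial_x w$ translate directly into $\partial_x v^1 + \partial_y v^2 = 0$ and $\partial_y v^1 - \partial_x v^2 = 0$. The holomorphy of $F$ itself follows from differentiation under the integral sign in \eqref{PHIcont}, which is justified since $\varphi$ is bounded and $|\xi - z|^{-2}\,|d\xi|$ is integrable along $\Gamma_f$ uniformly for $z$ in compact subsets of $\C\setminus\Gamma_f$.

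For \eqref{velova}, my plan is to prove pointwise vanishing by decomposing the integral in \eqref{F:V}. A useful preliminary observation is that $\omega \in L_p(\R)\cap{\rm BUC}^\alpha(\R)$ forces $\omega(s)\to 0$ as $|s|\to\infty$: otherwise Hölder continuity would place $|\omega|\geq\delta/2$ on intervals of fixed positive length around a sequence $|s_n|\to\infty$, contradicting $\omega\in L_p(\R)$. Given $z_n = (x_n,y_n)\in\Omega_\pm$ with $|z_n|\to\infty$, I would pass to a subsequence along which $d(z_n):={\rm dist}(z_n,\Gamma_f)$ has a limit in $[0,\infty]$. When $d(z_n)\to\infty$, Hölder's inequality combined with the uniform bound $\int_\R[(x-s)^2+(y-f(s))^2]^{-p'/2}\,ds\leq C\,d(z)^{1-p'}$ (obtained as in Step~2 of the proof of Lemma~\ref{L:Bdd}) yields $|v(z_n)|\leq C\|\omega\|_p d(z_n)^{-1/p}\to 0$. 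When $d(z_n)$ stays bounded, necessarily $|x_n|\to\infty$; splitting the $s$-integration at $|s|=|x_n|/2$, the inner part is controlled by $C|x_n|^{-1/p}\|\omega\|_p$ via $|x_n-s|\geq|x_n|/2$, while the outer part is bounded by $C\,d(z_n)^{-1/p}\|\omega\mathbf{1}_{|s|>|x_n|/2}\|_p$, whose $L_p$-tail vanishes as $|x_n|\to\infty$ provided $d(z_n)$ is bounded below.

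The main obstacle is the residual subcase $d(z_n)\to 0$ with $|x_n|\to\infty$, where the last estimate degenerates. I plan to address it by invoking Theorem~\ref{T:Priva}: the Hölder continuity of $v_\pm$ up to $\Gamma_f$ yields $|v(z_n)-v_\pm(a_n,f(a_n))|\leq C\,d(z_n)^\alpha\to 0$ for a nearest boundary point $(a_n,f(a_n))$, with $|a_n|\to\infty$, so it suffices to show $v_\pm(a_n,f(a_n))\to 0$. Using the Plemelj formula \eqref{Vary}, the boundary term $-\tfrac{1}{2}\omega(1,f')/(1+f'^2)(a_n)$ vanishes since $\omega(a_n)\to 0$, and the principal-value integral splits into a tail part handled exactly as above plus a near-singular contribution from $|s-a_n|<1$. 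Substituting $s=a_n-u$ and writing $\omega(a_n-u)=\omega(a_n)+[\omega(a_n-u)-\omega(a_n)]$, the $\omega(a_n)$ piece cancels in the principal value by oddness, while the remainder is controlled via the interpolated bound $|\omega(a_n-u)-\omega(a_n)|\leq \min(2\eta_n,[\omega]_\alpha|u|^\alpha)$ with $\eta_n:=\sup_{|t|\geq|a_n|-1}|\omega(t)|\to 0$, producing a bound of order $\eta_n(1+\log\eta_n^{-1})\to 0$.
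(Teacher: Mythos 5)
Your treatment of \eqref{inc+iro} via the Cauchy--Riemann equations for the holomorphic function $F=\overline v$ from \eqref{PHIcont} is fine and consistent with the paper, which records these identities as a direct computation. For \eqref{velova} your overall strategy is essentially the paper's: establish decay of the boundary values $v_\pm|_{\Gamma_f}$ at infinity, use Theorem~\ref{T:Priva} to transfer this decay to points close to the graph, and use direct kernel estimates when ${\rm dist}(z,\Gamma_f)$ is bounded away from zero. The differences are organisational rather than substantial: you argue along subsequences of $d(z_n)$ and rely on H\"older's inequality with $L_p$-tails of $\oo$, whereas the paper fixes $\e$, splits at $d(z)\lessgtr\delta$, and proves the boundary decay through the operators $B^0_{n,m}(f)$ instead of working with \eqref{Vary} directly; your modulus-of-continuity interpolation with the logarithmic factor plays the role of the paper's $[\oo]_\alpha^{1/2}\|\oo\|_{L_\infty}^{1/2}$-type estimate.

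The one step that does not survive as written is the claim that, in the near-singular part $|s-a_n|<1$ of \eqref{Vary}, the $\oo(a_n)$ piece ``cancels in the principal value by oddness''. After the substitution $s=a_n-u$ the kernel is $u\mapsto \big(f(a_n-u)-f(a_n),\,u\big)\big/\big(u^2+(f(a_n)-f(a_n-u))^2\big)$, which is not odd: neither the first component nor the denominator has the required parity unless $f$ is affine, so the principal value of the kernel alone is in general nonzero. The repair is immediate and leaves your conclusion intact: symmetrising in $u$ exactly as in the estimate of $I_2$ in the proof of Lemma~\ref{L:Bdd} bounds this principal value by $C[f']_\alpha$ uniformly in $a_n$, and since it is multiplied by $\oo(a_n)\to0$ the whole contribution still vanishes (alternatively, freeze the slope, i.e.\ replace $f(a_n)-f(a_n-u)$ by $f'(a_n)u$, which does give an odd kernel, and control the error by $[f']_\alpha|u|^{\alpha-1}$). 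Note also that your ``tail handled exactly as above'' does require the additional split at $|s|=|a_n|/2$ to gain smallness rather than mere boundedness, as a plain H\"older bound over $\{|s-a_n|\ge1\}$ only yields $C\|\oo\|_p$. With these minor adjustments the argument is complete.
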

\begin{proof}
The relations \eqref{inc+iro} follow by direct computation.
We next prove that $v_+ $  vanishes at infinity (the claim for $v_-$ follows by arguing along the same lines).
We divide the proof in two steps.\medskip

\noindent{Step 1.} We first show that $v_+(x,f(x))\to 0$ for $|x|\to\infty.$ 
Recalling Lemma~\ref{L:Plemelj} and~\eqref{Bom}, we write
\[
v_+(x,f(x))=\frac{1}{2\pi }\big(-B_{1,1}^0(f)[\oo],B_{0,1}^0(f)[\oo]\big)(x)-\frac{1}{2}\frac{\oo(1,f')}{1+f'^2}(x),\qquad x\in\R.
\] 
Because $\oo\in {\rm BUC}^{\alpha}(\R)\cap L_p(\R)$, the last term on the right vanishes at infinity.
We next prove that, given $n,\, m\in\N$,  we also have
\begin{align}\label{bnmvi}
B_{n,m}^0(f)[\oo](x)\to0\qquad\text{for $|x|\to\infty$.}
\end{align}
Let thus $\e>0$ be given and choose $N>0$ such that 
\[
\|f'\|_\infty^n\|\oo\|_p\Big(\frac{2}{(p'-1)N^{p'-1}}\Big)^{1/p'}\leq\frac{\e }{2 },
\]
where $p'$ is the adjoint exponent to $p$.
This choice together with H\"older's inequality then yields
\begin{align*}
|B_{n,m}^0(f)[\oo](x)|\leq T(x)+\|f'\|_\infty^n\|\oo\|_p\Big(\frac{2}{(p'-1)N^{p'-1}}\Big)^{1/p'}\leq T(x)+\frac{\e}{2},
\end{align*}
where
\begin{align*}
 T(x)\coloneqq{}\Big|\PV\int_{\{|s|\leq N\}} \cfrac{\big(\delta_{[x,s]} f / s\big)^n}{\big[1+\big(\delta_{[ x,s]}f / s\big)^2\big]^m} \frac{\oo(x- s)}{ s}\, d s\Big|,\qquad x\in\R.
\end{align*}
In order to estimate $T(x)$ we note that
\begin{align*}
T(x)\leq C\int_0^N \Big|\frac{\oo(x- s)-\oo(x+s)}{ s}\Big|+ |\oo(x+s)|\cdot\Big|\frac{f(x+s)-2f(x)+f(x-s)}{s^2}\Big|\, d s,
\end{align*}
with $C$ depending only on $n$, $m$, and $\|f'\|_\infty$.
Taking into account that $\oo$ vanishes at infinity we obtain for $|x|>M$, where $M>N$ is chosen sufficiently large, that 
\[
T(x)\leq C\big([\oo]_\alpha^{1/2}\|\oo\|_{L_\infty(\{|x|>M-N\})}^{1/2}+\|\oo\|_{L_\infty(\{|x|>M-N\})}\big)\leq \frac{\e}{2}.
\]
This establishes \eqref{bnmvi}.\medskip

\noindent{Step 2.} We now prove  that $v_+(z)\to 0$ for $|z|\to\infty.$
Let thus $\e>0$ be given. From Step~1 we find $x_0>0$ such that $|v_+(x,f(x))|\leq \e/2$ for all $|x|\geq x_0.$
Given $z=(x,y)\in\0_+$, let again~${d(z)\coloneqq{}{\rm dist \,}(z,\Gamma_f)=|z- z_\G|}$ with $z_\G\in\G$.

Assume first that $d(z)\leq \delta\coloneqq{} \min\{1,\e/(2(1+[v_+]_{\alpha}))\}$.
Let $x_1\coloneqq{}x_0+1$. 
If   $z=(x,y)\in\0_+$  satisfies with  $d(z)\leq \delta$ and~${|x|\geq x_1}$, we deduce for the corresponding point~${z_\G\coloneqq{}(x_\G,f(x_\G))}$ that $|x_\G|\geq x_0$. 
Hence, for all such~$z\in\0_+$,      Theorem~\ref{T:Priva} leads us to
\begin{align*}
|v_+(z)|=|v_+(z)-v_+(z_\G)|+|v_+(z_\G)|\leq [v_+]_{\alpha}d(z)+\e/2\leq\e.
\end{align*}

Assume now that $d(z)\geq \delta$.
Let  $s_0>0$  be chosen such that 
\[
\|\oo\|_p\Big(\frac{2}{(p'-1)s_0^{p'-1}}\Big)^{1/p'} \leq\frac{\e}{2}.
\]
It then holds
 \begin{align*}
 |v_+(z)|&\leq  \int_\R\frac{|\oo(x-s)|}{\sqrt{s^2+(y-f(x-s))^2}}\, ds\leq T(z)+  \int_{\{|s|> s_0\}}\frac{|\oo(x-s)|}{|s|}\, ds  \\[1ex]
 &\leq  T(z)+  \|\oo\|_p\Big(\frac{2}{(p'-1)N^{p'-1}}\Big)^{1/p'}\leq  T(z)+  \frac{\e}{2},
 \end{align*}
 where 
 \[
T(z)\coloneqq{}\int_{\{|s|< s_0\}}\frac{|\oo(x-s)|}{\sqrt{s^2+(y-f(x-s))^2}}\, ds,\qquad z=(x,y)\in\0_+,\, d(z)\geq \delta.  
 \]
Let  $N>0$ be chosen such that 
$$\frac{4s_0\|\oo\|_\infty}{N}+\frac{2s_0 \|\oo\|_{L_\infty(\{|x|\geq N\})}}{\delta}\leq\frac{\e}{2} $$
and set $M_1\coloneqq  N+s_0,  $   $M_2\coloneqq N+2\|f\|_{L_\infty(\{|x|\leq M_1+s_0\})}$, and $M\coloneqq 2\max\{M_1,\, M_2\}.$ 
Given~${|z|\geq M}$, we distinguish two cases.

(1)\, If~$|x|\geq M_1,$ then
\[
T(z)\leq \frac{2s_0}{\delta}\|\oo\|_{L_\infty(\{|x|\geq M_1-s_0\})}=\frac{2s_0}{\delta}\|\oo\|_{L_\infty(\{|x|\geq N\})}\leq\frac{\e}{2}.
\]

(2)\, If $|x_1|\leq M_1$ and $|y|\geq M_2$, then $|y-f(x-s)|\geq |y/2|$ and therefore
 \[
T(z)\leq \frac{4s_0}{|y|}\|\oo\|_\infty\leq\frac{4s_0}{N}\|\oo\|_\infty\leq \frac{\e}{2}.
\]
Hence $|v_+(z)|\leq\e$ for all  $z\in\0_+$ that satisfy~${d(z)\geq \delta}$ and~${|z|\geq M}$.

To summarize,  for all $z\in\0_+$ with $|z|\geq \max\{M, x_1+\|f\|_{L_\infty(\{|x|\leq x_1+1\})}+1\} $ we have established that~${|v_+(z)|\leq\e}$ and this completes the proof.
\end{proof}

\subsection*{Acknowledgement}
The authors gratefully acknowledge the support by the RTG 2339
 ``Interfaces, Complex Structures, and Singular Limits'' of the German Science Foundation (DFG).

\bibliographystyle{siam}
\bibliography{JB}
\end{document}